\def\version{\today}	        	%

\documentclass[reqno,11pt]{amsart} 
\usepackage[T1]{fontenc}
\usepackage[utf8]{inputenc}
\usepackage{amsmath,amsthm} 
\usepackage[normalem]{ulem}

\usepackage[authoryear]{natbib}

\usepackage{fontawesome5}

\usepackage{mathrsfs}
\usepackage{amssymb}
\usepackage{srcltx} 
\usepackage{dsfont}
\usepackage{hyperref}
\usepackage{color}
\usepackage{enumerate}
\usepackage{tikz, pgfplots}
\usepackage{caption}
\usepackage{subcaption}
\usepackage{comment}
\usepackage{lscape}
\usepackage{graphicx}
\usepackage{tabularx}
\usepackage{pdfpages}    % Felix: used in Figure fig:THEproof-illustration to draw with tikz above an imported picture

\numberwithin{equation}{section}
 
\def\b{\beta}

\def\d{{\rm d}}

\newfam\Bbbfam 
\font\tenBbb=msbm10 
\font\sevenBbb=msbm7 
\font\fiveBbb=msbm5 
\textfont\Bbbfam=\tenBbb 
\scriptfont\Bbbfam=\sevenBbb 
\scriptscriptfont\Bbbfam=\fiveBbb

\def\2{\mathbf 2}

\newcommand{\R}     {\mathbb{R}} 
\newcommand{\Z}     {\mathbb{Z}} 
\newcommand{\N}     {\mathbb{N}} 
\renewcommand{\P}   {\mathbb{P}} 
 
\newcommand{\E}     {\mathbb{E}}

\def\1{{\mathchoice {1\mskip-4mu\mathrm l}      % Blackboard bold 1 
{1\mskip-4mu\mathrm l} 
{1\mskip-4.5mu\mathrm l} {1\mskip-5mu\mathrm l}}} 
 
\def\comment#1{} 
\newtheoremstyle{thm}{2ex}{2ex}{\itshape\rmfamily}{} 
{\bfseries\rmfamily}{}{1.7ex}{} 
 
\newtheoremstyle{rem}{1.3ex}{1.3ex}{\rmfamily}{} 
{\itshape\rmfamily}{}{1.5ex}{}

\newcommand\numberthis{\addtocounter{equation}{1}\tag{\theequation}}
 
\renewcommand{\theequation}{\thesection.\arabic{equation}} 
 
\newtheorem{theorem}{Theorem}[section] 
\newtheorem{lemma}[theorem]{Lemma} 
\newtheorem{prop}[theorem] {Proposition} 
\newtheorem{cor}[theorem]  {Corollary}

\newtheorem{conj}[theorem] {Conjecture}

\theoremstyle{definition}
\newtheorem{definition}[theorem] {Definition}

\newtheorem{remark}[theorem]{Remark}

\renewcommand{\d}{{\rm d}} 
 
\newcommand{\eps}{\varepsilon}

\definecolor{Red}{rgb}{1,0,0}

 \pgfplotsset{compat=1.18}

\title[Logarithmic scaling of selective sweep curves]{Logarithmic scaling of selective sweep curves:\\ from tents to houses}
\author[F.~Boenkost, F.~Hermann, A.~Tóbiás and A.~Wakolbinger]{}

\begin{document}

\begin{abstract}
One of the classical results of mathematical population genetics states that the frequency of a beneficial mutant's offspring, on its way to fixation in a large population, looks like a logistic curve. A {\em logarithmic scaling} (both in height and time) of these {\em selective sweep curves} leads (in the case of  strong selection) to a tent-like shape in the large population limit: First the logarithmic frequency of the mutant increases linearly from 0 to 1, then that of the former resident decreases from 1 to 0. For {\em moderate selection} the logarithmic frequencies develop (in the large population limit) a jump at the beginning/the end of the sweep, which takes the shape of the tent into that of a house.
Our main result (proved for the Moran model) assesses the regularity of this convergence in the large population limit: It is uniform in the house's roof
(phases of linear growth and decline)
and ``Skorokhod $M_1$'' in the house's walls
(closely around the jumps).
Apart from interest in its own right, we anticipate that this result and the proof techniques will be instrumental for extending the description of clonal interference by Poissonian interacting trajectories (as it was done in~\cite{HGSTW24} for strong selection) also to moderate selection.
\end{abstract}

\maketitle

\vspace{-0.5cm}

\begin{center}
  {\sc Florin Boenkost\footnote{\mbox{Universität Wien, Fakultät für Mathematik, Oskar-Morgenstern-Platz 1, Wien, Austria, {\tt florin.boenkost@univie.ac.at}}}
  Felix Hermann\footnote{Goethe-Universität Frankfurt am Main, FB 12, Institut für Mathematik, 60629 Frankfurt, Germany, {\tt hermann@math.uni-frankfurt.de}},
   András Tóbiás\footnote{Department of Computer Science and Information Theory, Faculty of Electrical Engineering and Informatics, Budapest University of Technology and Economics, Műegyetem rkp. 3., H-1111 Budapest, and HUN-REN Alfréd Rényi Institute of Mathematics, Reáltanoda utca 13--15, H-1053 Budapest, Hungary, {\tt tobias@cs.bme.hu}}
   and
   Anton Wakolbinger\footnote{Goethe-Universität Frankfurt am Main, FB 12, Institut für Mathematik, 60629 Frankfurt, Germany, {\tt wakolbin@math.uni-frankfurt.de}}}
\end{center}
\centerline{\small(\version)}

\keywords{\emph{Keywords and phrases.} Selective sweeps, scaling limit,  Moran model, moderate selection.}

\subjclass{\emph{MSC2020 subject classifications.} Primary: 92015, Secondary: 60J28, 60J85, 60F17.}
\maketitle

\section{Introduction}\label{secIntro}

In a Moran model with population size $N\in \N$, let $X_1^N$ be the size of the offspring of a mutant that enters a monomorphic population at time $t=0$ and has selective advantage $s$.
We denote by $X_0^N=N-X_1^N$ the size of the (initially) \emph{resident} population.

\subsubsection*{Strong selection} Assume for the moment that, as $N\to \infty$, the parameter $s$ does not scale with $N$; in other words, the selection is strong, acting on the {\em generation time scale}. It is then well known (see~e.g.~\cite[Proposition~3.1]{C17Markovproc}) that, with 
\begin{equation}\label{deftaueps}
  \mathcal T^N_\eps:= \inf\{t>0: X_1^N \ge \eps N\}  
\end{equation}
conditional on the event $\{\mathcal T_1^N< \infty\}$, for each $\eps \in (0,1)$ the sequence of processes $(\tfrac 1NX_1^N(\mathcal T^N_\eps+t))_{t\ge 0}$    converges uniformly in probability (as $N\to \infty$) to 
$$x_\eps(t):= \frac 1{1+e^{-st}(1/\eps -1)},\quad t\ge 0,$$
the solution of the logistic differential equation
\begin{equation}\label{logistic}
    \d x(t) = s\,x(t)(1-x(t)), \quad x(0) = \eps.
\end{equation}

\begin{figure}
 \begin{tikzpicture}
    \node at (0,0) {\includegraphics[width=7cm,trim={0.4cm, 1.5cm, 0.8cm, 2cm},clip]{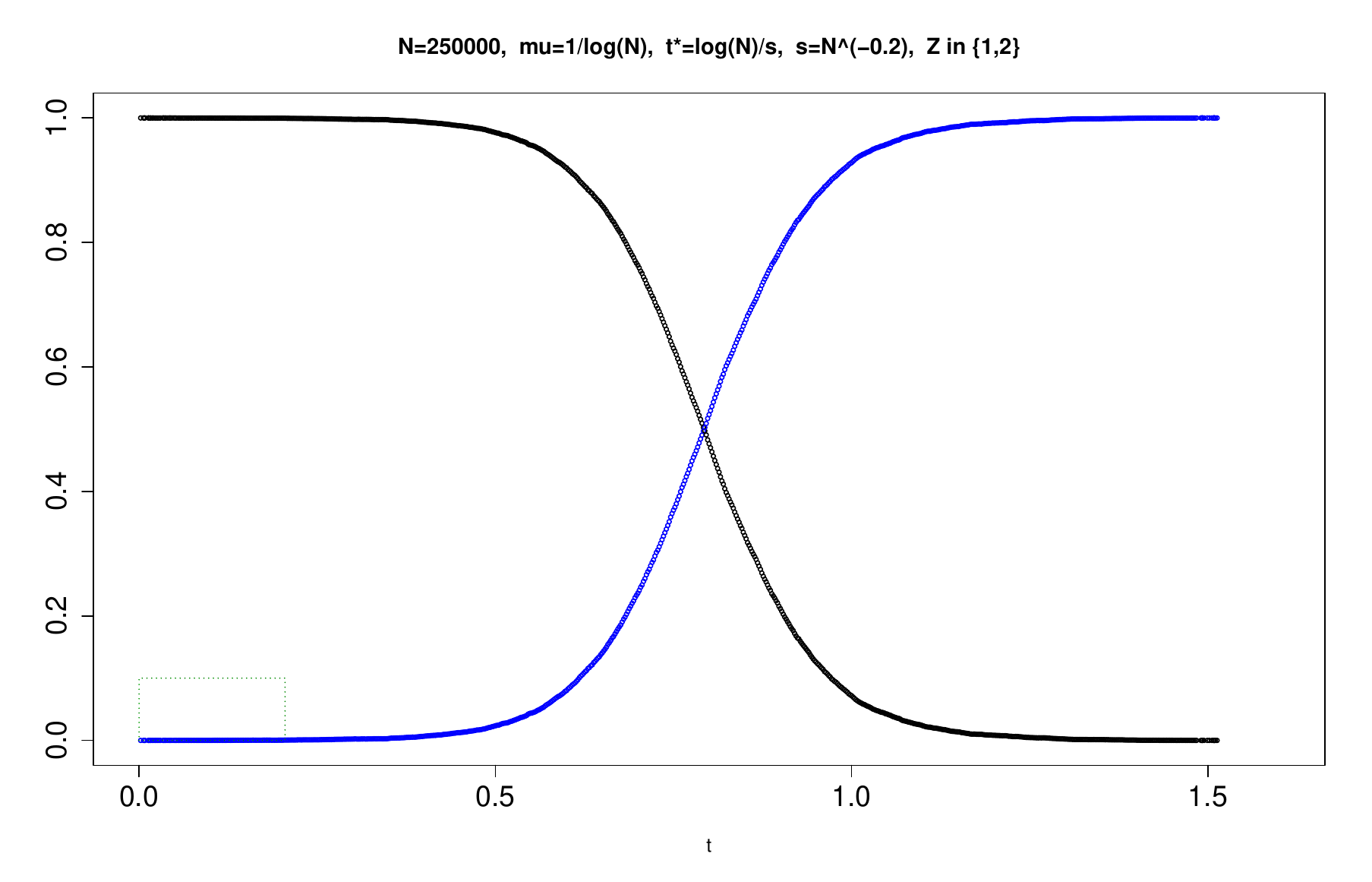}};
    \node[right] at (-2.25,-1.05) {\small \faSearch};
 \end{tikzpicture}
  \includegraphics[width=7cm,trim={0.5cm, 1cm, 0.8cm, 2cm},clip]
   {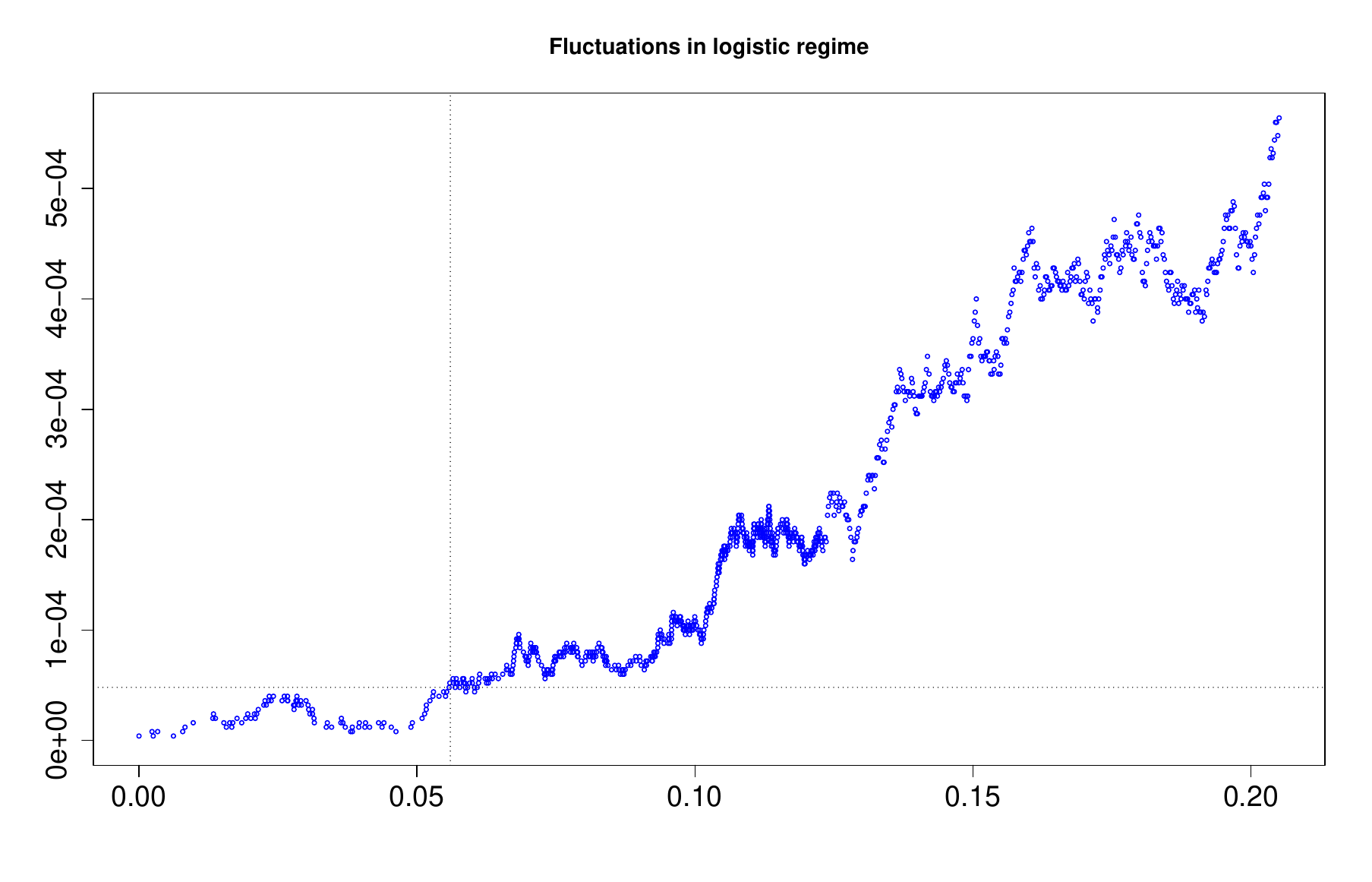}
  \phantom{!}
  \includegraphics[width=7cm,trim={0.5cm, 1cm, 0.8cm, 2cm},clip]
   {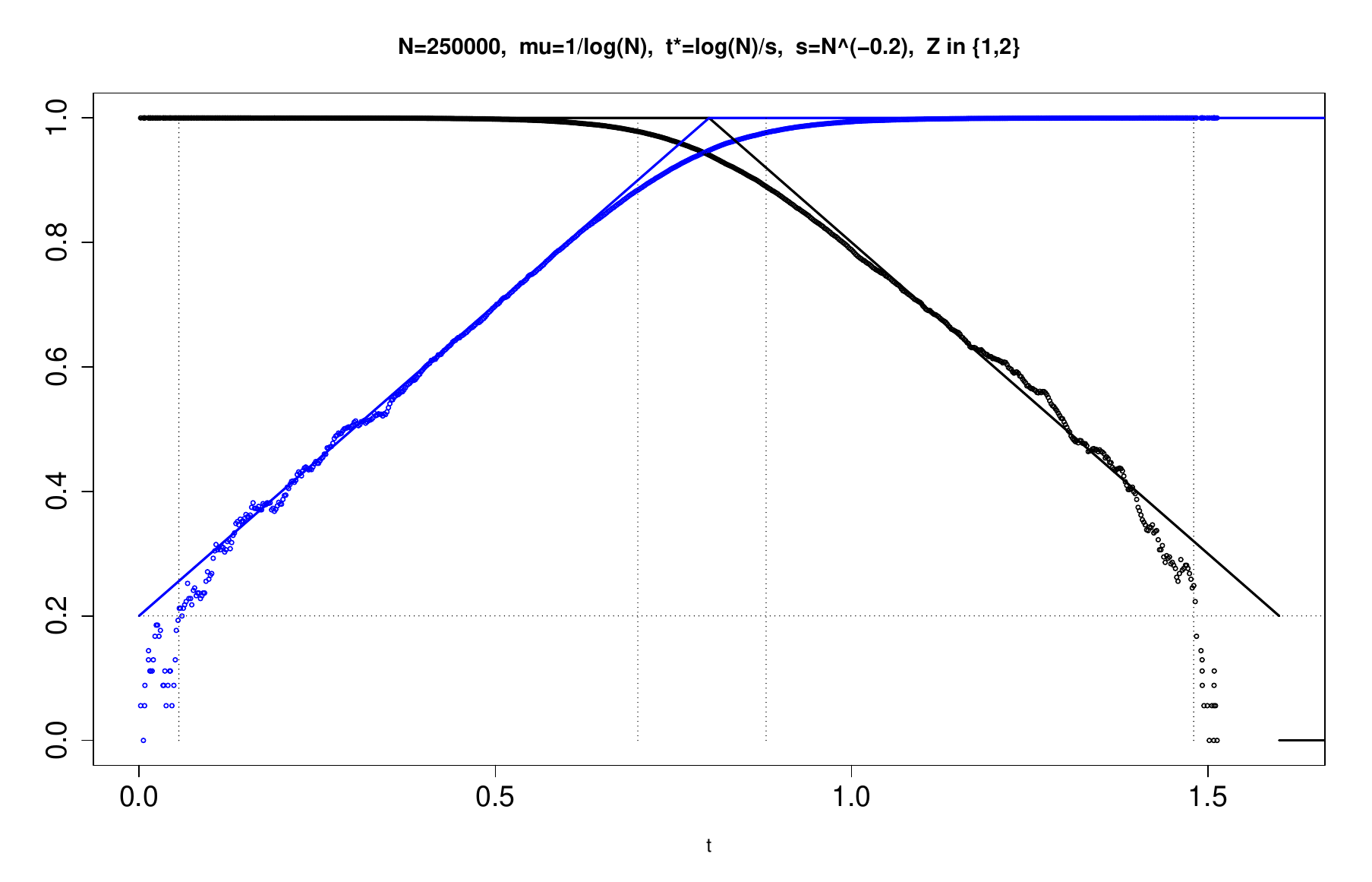}
\begin{tikzpicture}
    \node at (0,0) {\includegraphics[width=7cm,trim={0.5cm, 1.5cm, 0.8cm, 2cm},clip]{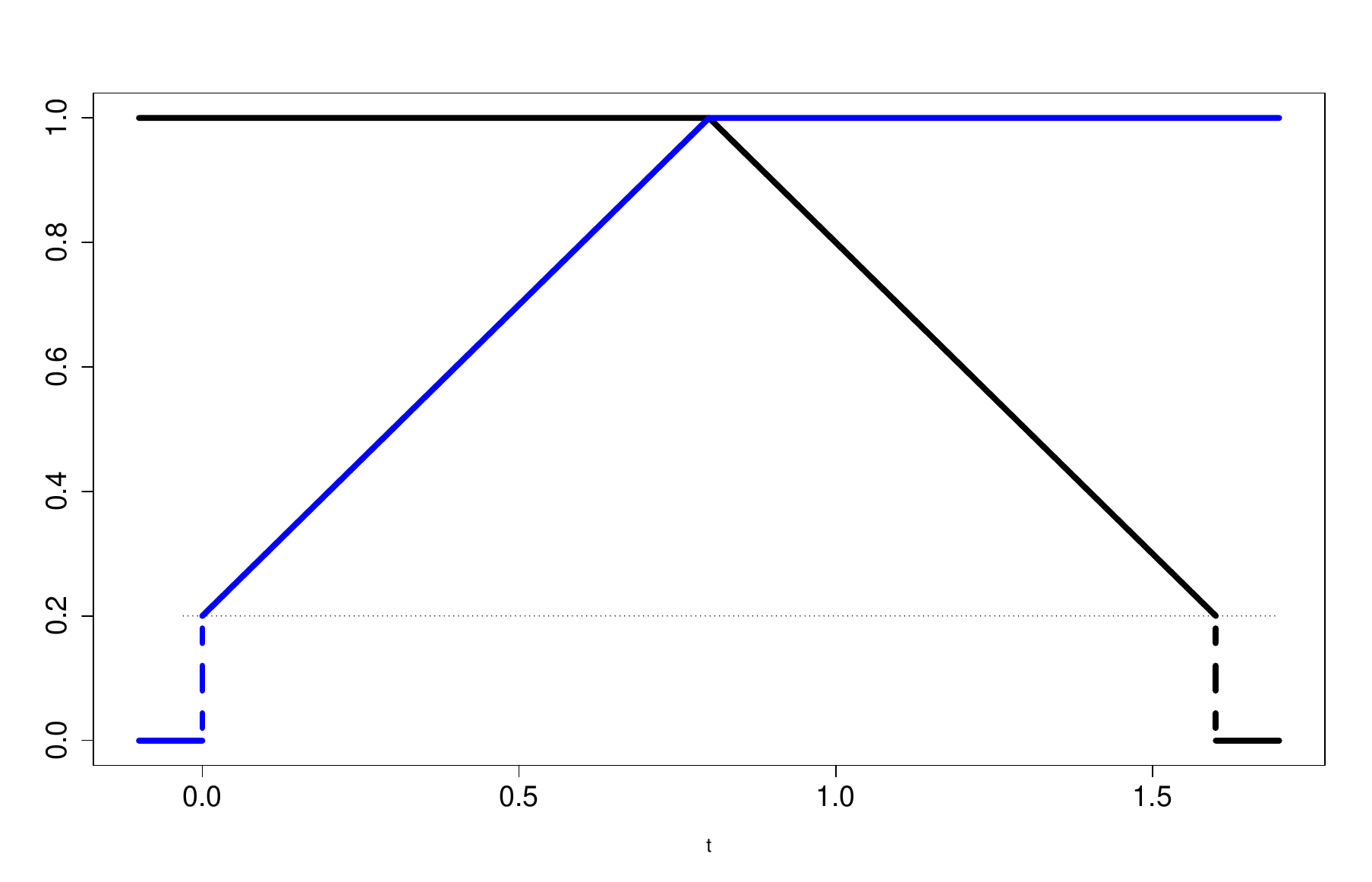}};
    \node[right] at (-2.75,1.5) {\small $h_0$};
    \node[right] at (0.85,1.2) {\small slope $-a$};
    \node[right,blue] at (-2.75,-0.3) {\small $h_1$};
    \node[right,blue] at (-1.25,0.3) {\small slope $a$};
    \node at (2.87,-0.2) {\small $\tfrac{2(1-b)}{a}$};
    \draw[->] (2.87,-0.55) -- (2.87,-0.8);
    \node at (-2.8,-0.825) {\small $b$};
 \end{tikzpicture}
   \caption{\label{fig:single_invader}
     This figure depicts a simulation of a selective sweep curve in the Moran model with moderate selection of population size $N=250\,000$ on the time scale $\frac{\log N}{\varphi_N}$ in comparison to the limiting \emph{house}. One newcomer (blue) of selective advantage $s=a\cdot\varphi_N=N^{-b}$ (i.e.\ $a=1,b=0.2$) invades the former resident population (black) and fixates eventually.
     Top left: $\frac1N(X_0^N,X_1^N)$ until time of fixation;
     top right: a zoom into the time interval $[0,0.2] $, showing $\frac1NX_1^N$  and its fluctuations;  horizontal line marks $N^{b}/N$, vertical line marks transition from phase 1 to 2;
     bottom left: dots show $(H_0^N,H_1^N)$, straight lines indicate scaling limit, horizontal line marks $b$, vertical lines illustrate distinction of the 5 phases; all these 3 panels use the same stochastic realization.
     Bottom right: detailed depiction of the scaling limit, i.e.\ the house, via $(h_0,h_1)$ later defined in \eqref{def-house}. 
}
\end{figure}

It has been observed already  in pioneering papers  on selective sweeps (\cite{smith1974hitch,KHL89}) that the random fluctuations in the ``early phase of the sweep'' cannot be captured by a law-of-large-numbers approximation in the same way as~\eqref{logistic}. 
It is, however, readily checked that $$\log_N(Nx_{1/N}(t\log N))\to st \quad\mbox{as } N\to \infty \quad\mbox{for }t \in (0,1/s),$$ and it is this {\em logarithmic transform} under which a  law-of-large-numbers approximation of $X^N_1$ turns out to be robust over the entire duration of the sweep. 
Indeed, for the {\em logarithmic frequencies} $H_0^N$ and $H_1^N$ of the resident and the mutant population (as specified in Definition~\ref{defHN}) it was proved in~\cite[Corollary~4.8]{HGSTW24} for strong selection that, conditional on the fixation events~$\{\mathcal T^N_1 < \infty\}$, the sequence of $[0,1]^2$-valued processes $(H_0^N, H_1^N)$ converges in probability uniformly to the $[0,1]^2$-valued function  $t \mapsto (1\wedge(1-st)\vee 0, st\wedge 1)$.
This is in line with the fact that the duration of a selective sweep is $\sim 2\frac {\log N}s$ as $N\to \infty$.

\subsubsection*{Moderate selection}
If the selection is {\em moderate}, i.e. $1/N \ll s \ll 1$, then heuristic arguments (see~\cite[eqs. (19), (20)]{baake2019modelling}) suggest $2\frac {\log (sN)}s$ as a better approximation  of the sweep duration, and also predict that the mutant's Malthusian growth starts instantaneously from frequency $\tfrac 1{sN}$. 
In this context we quote a sentence from~\cite{barton1998effect}: 
{\em In particular, the expected frequency
of an allele given that it will fix is accelerated above the
unconditional expectation of $e^{st}/2N$ by a factor $1/2s$.} (The factors 2 appear because~\cite{barton1998effect} works with an (effective) population size $2N$.)

Our main result, Theorem~\ref{thm:house}, confirms this intuition and turns some of the (partially heuristic) statements from~\cite{barton1998effect} and~\cite{baake2019modelling}   into a limit theorem that extends~\cite[Corollary 4.8]{HGSTW24}:  While for strong selection the sequence  $\min(H_0^N, H_1^N)$ converges as $N\to \infty$ to a ``tent'', this same sequence converges to a ``house'' in the regime of moderate selection, see Figure~\ref{fig:single_invader} (bottom panels). 

\subsubsection*{The 5 phases of the sweep}
The limiting path of the logarithmic frequencies reflects the phases in the evolution of the fractions of the mutant and the originally resident (``wildtype'') allele. In phase~1, the number~$X_0^N$ of mutants is of order not higher than $1/s$, in phase 2 one has $1/s \ll X_0^N \ll N$, in phase 3 both $X_0^N$ and $X_1^N$ are of order $N$, whereas $X_1^N \ll N$ in the final phases.
This gives a ``scaling limit complement'' to Nick Barton's description of the ``four phases of a selective sweep''~\citep{barton1998effect}, showing that in the limit $N\to\infty$  the durations of phase~1 and phase 3 vanish compared to the duration of the sweep. Figure~\ref{fig:single_invader} (bottom left) as well as the proof of Theorem~\ref{thm:house} also shows that the decay of the previous resident allele from the size of order $1/s$ to $0$ happens in a similarly short time as the duration of phase 1, which suggests a subdivision of Barton's ``fourth phase of the sweep'' into two phases. We will address these as phases no.\ 4 and 5, making the picture symmetric in time.

\subsubsection*{Recent related work}

\begin{figure}
  \includegraphics[width=4.7cm,trim={0.5cm, 1cm, 0.8cm, 2cm},clip]
   {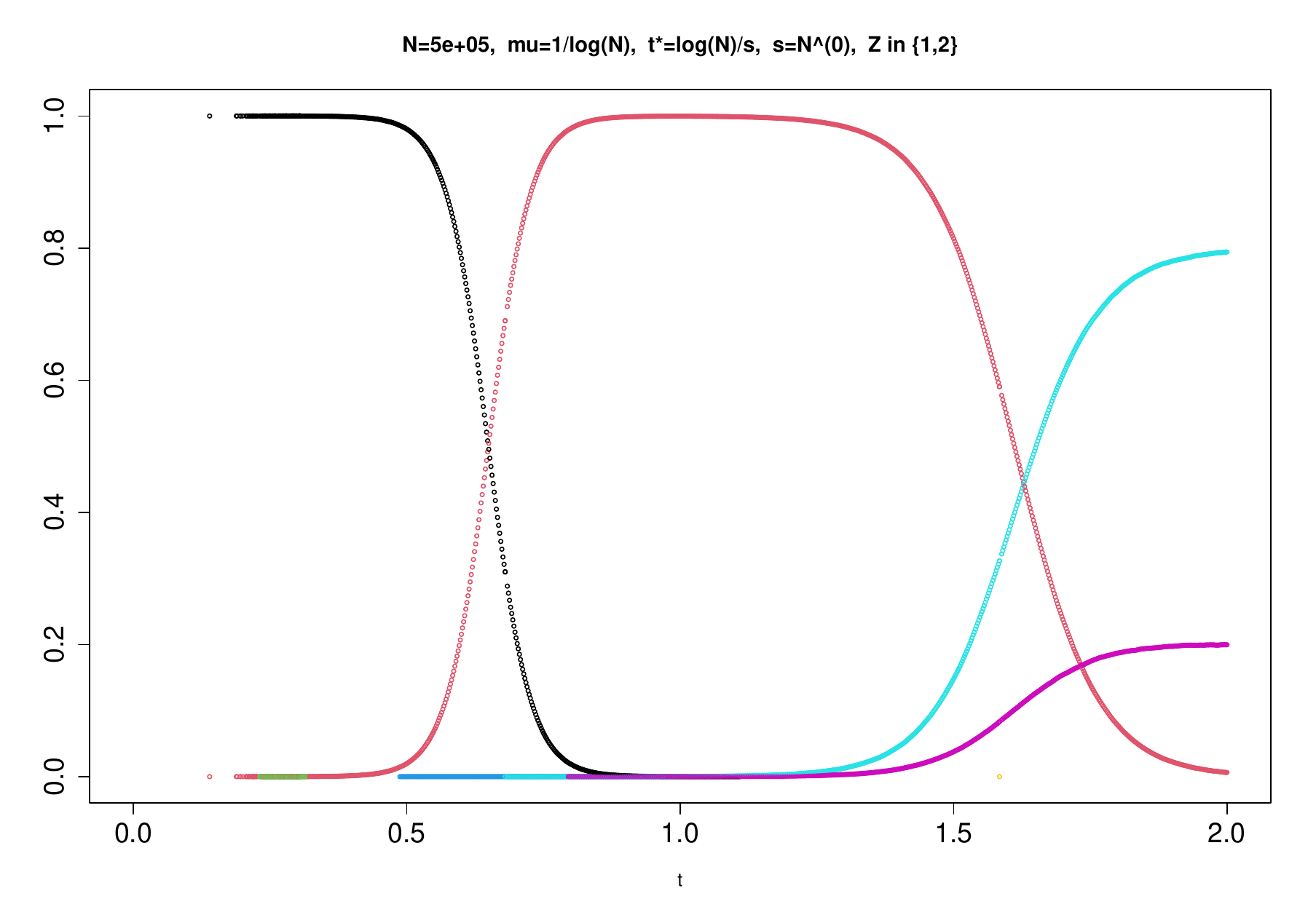}
  \includegraphics[width=4.7cm,trim={0.5cm, 1cm, 0.8cm, 2cm},clip]
   {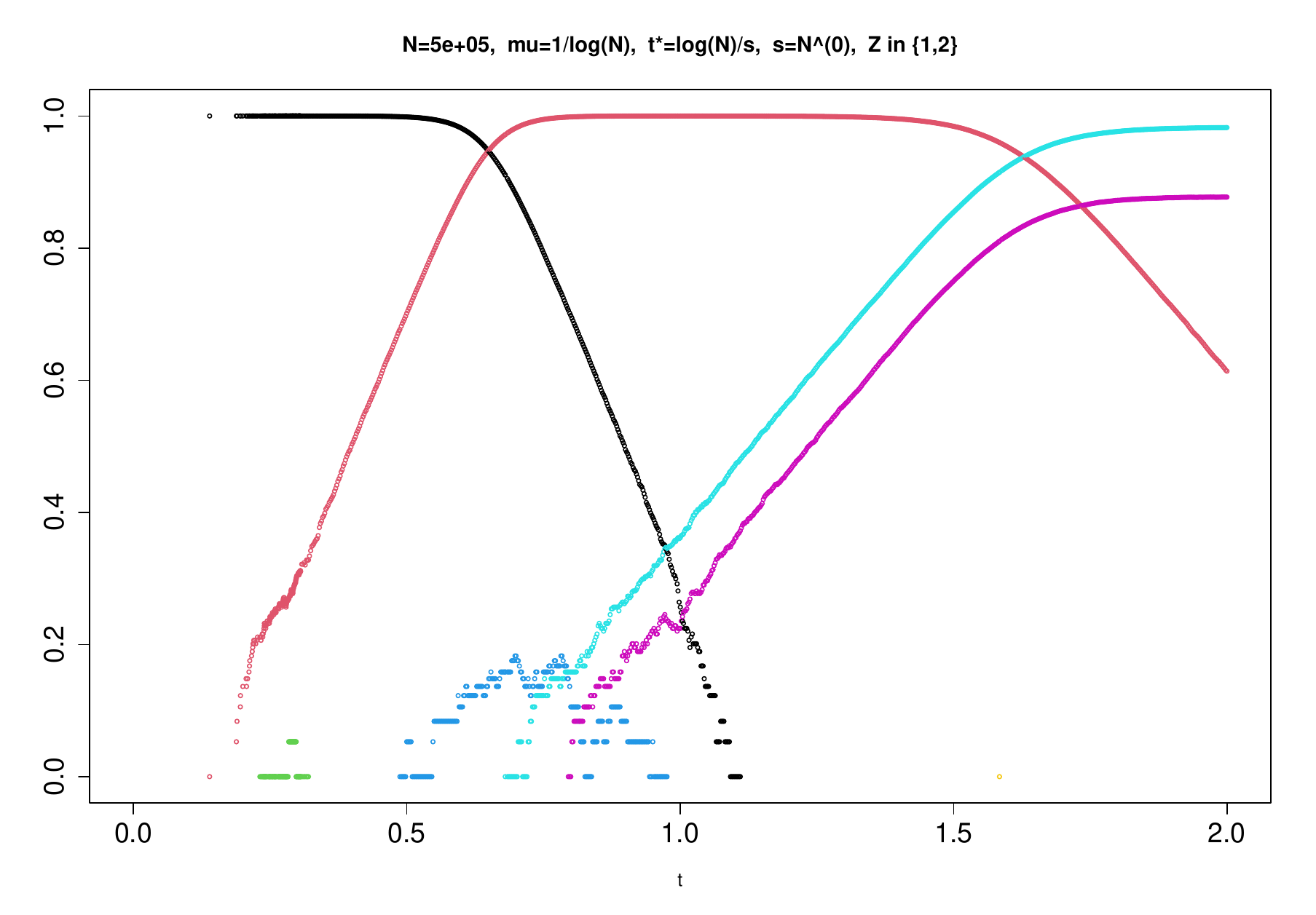}
  \includegraphics[width=4.7cm,trim={0.5cm, 1cm, 0.8cm, 2cm},clip]
   {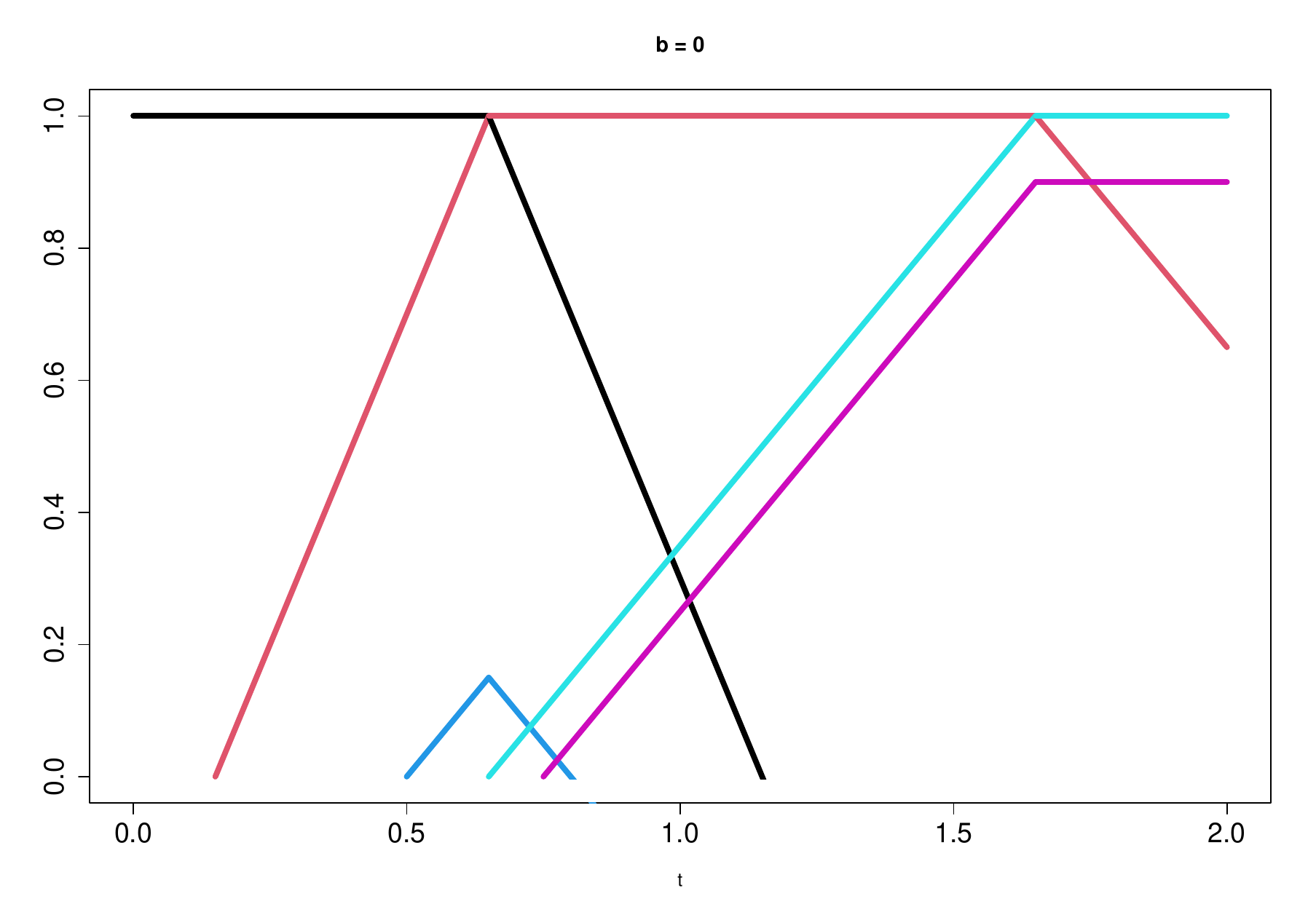} 
  \includegraphics[width=4.7cm,trim={0.5cm, 1cm, 0.8cm, 2cm},clip]
   {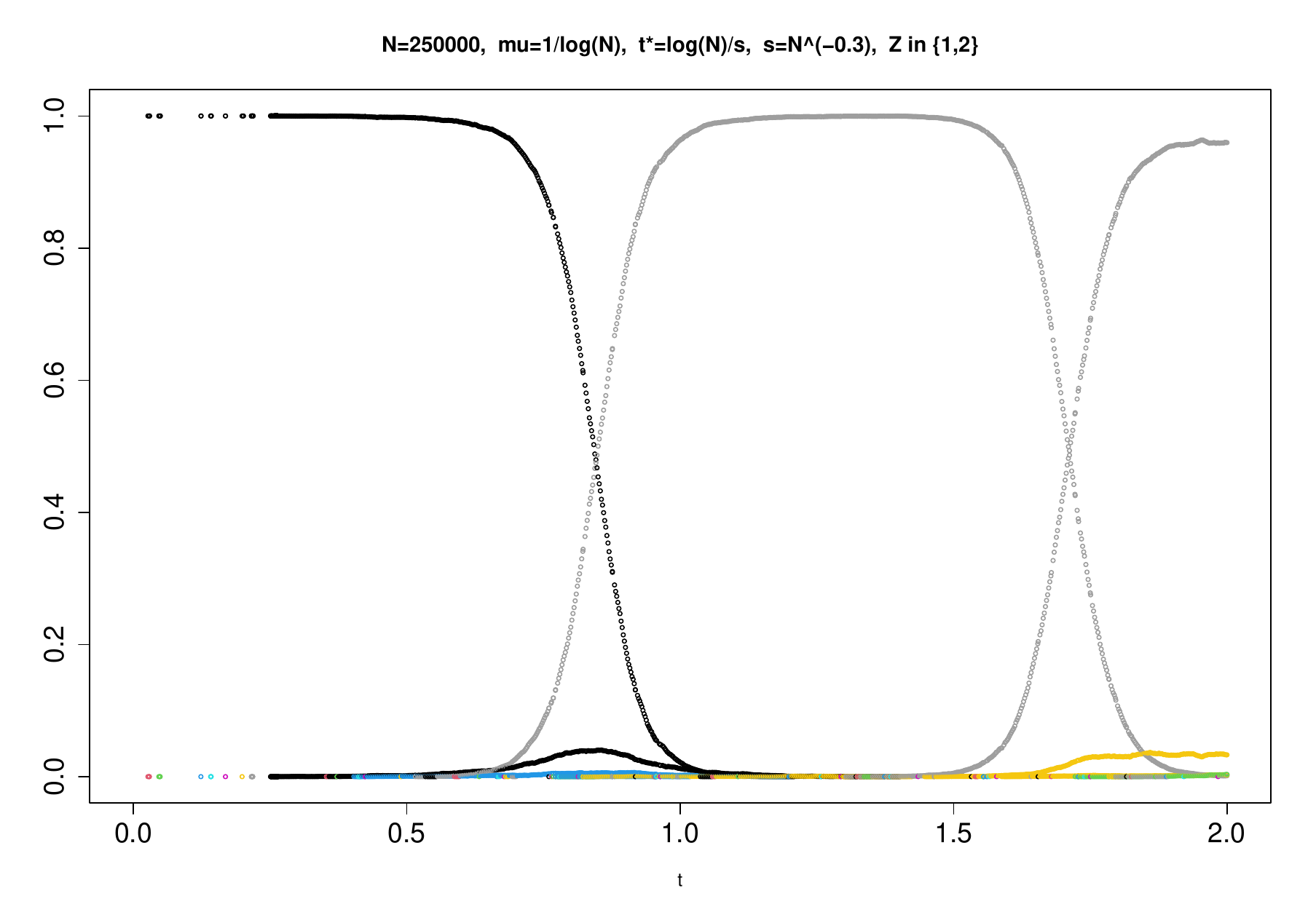}
  \includegraphics[width=4.7cm,trim={0.5cm, 1cm, 0.8cm, 2cm},clip]
   {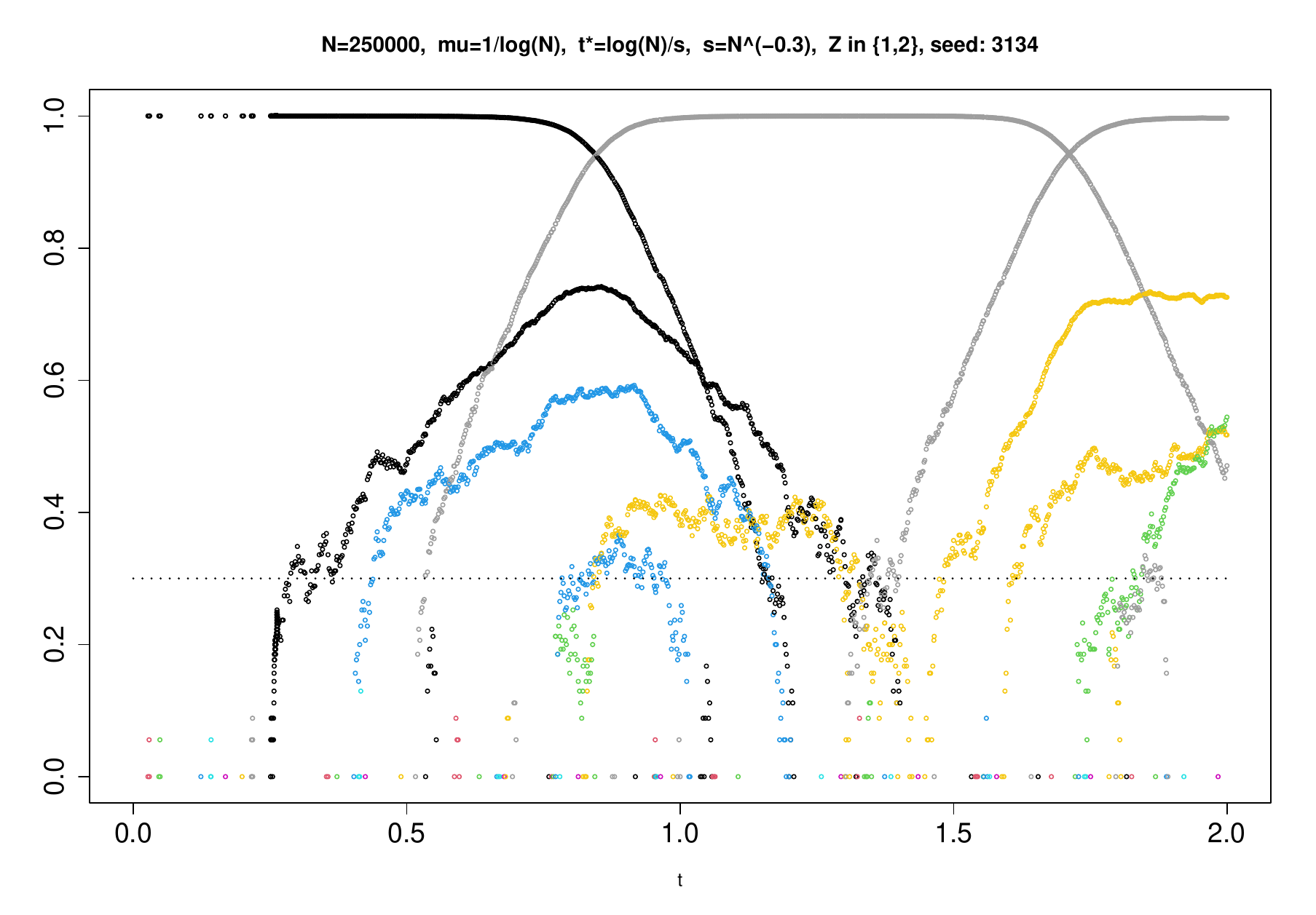}
  \includegraphics[width=4.7cm,trim={0.5cm, 1cm, 0.8cm, 2cm},clip]
   {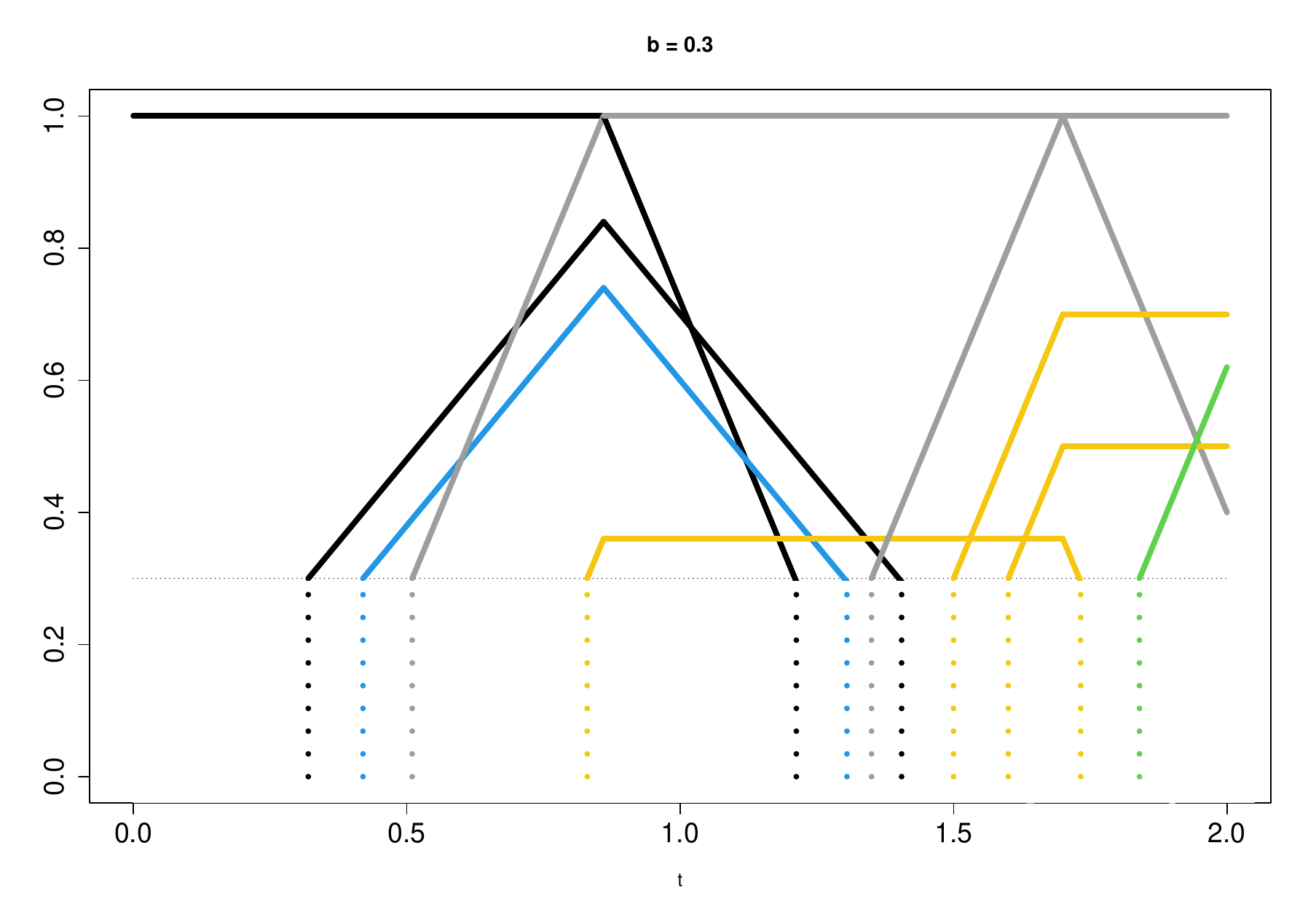} 
   \caption{\label{fig:storyline}
     This figure depicts simulations of a Moran model with mutation and selection
     in the Gerrish--Lenski regime (cf.\ the paragraph on recent related work in Section~\ref{secIntro}), where fitness increments are chosen uniformly at random from $\{\varphi_N,2\varphi_N\}$.
     Top: strong selection, $N=500\,000$; bottom: moderate selection, $N=250\,000$, $b=0.3$.
     Left: logistic sweeps obtained by $\frac1NX^N$; middle: logarithmic scaling, $H^N$, showing piecewise linear behavior; right: corresponding scaling limits.
   }
\end{figure}

For recurrent {\em strongly beneficial} mutations, a scaling limit for the logarithmic frequencies of the mutant families was obtained in~\cite{HGSTW24} for a population-wide mutation rate $m_N$ of order $\tfrac 1{\log N}$. In this so-called Gerrish--Lenski regime the expected number of mutant families that arrive during a selective sweep and survive the genetic drift remains away from $0$ and $\infty$ as $N\to \infty$. These families then perform interactions, called {\em clonal interference}, illustrated in Figure~\ref{fig:storyline}~(top middle). The scaling limit of the logarithmic frequencies is described by a system of {\em Poissonian interacting trajectories} (Figure~\ref{fig:storyline}, top right). We anticipate that Theorem~\ref{thm:house} will be a central building block in an extension of the results  of~\cite{HGSTW24} to the regime of moderate selection (cf.\ Figure~\ref{fig:storyline}, bottom, and Section~\ref{CImoderate}).

The work of~\cite{udomchatpitak2025accumulation} considers mutations with moderate selective advantage $s_N$ that arrive in the population at a rate $m_N \ll \tfrac 1{\log N}$. The authors show that this leads, as $N\to \infty$, to a Poisson process of selective sweeps on the  $\tfrac 1{s_N m_N}$-timescale, with the  duration of any such sweep being asymptotically negligible on that timescale. Even though in that regime there may be a large number of small mutant families present during a sweep that do not escape the genetic drift, we conjecture that their interaction with the  sweep is negligible. Thus, ``zooming in'' around a sweep (by locally expanding the time axis from $\tfrac 1{s_N m_N}$ to $\tfrac{\log N}{s_N}$) will, under a logarithmic transform as in Definition~\ref{defHN}, lead to the same limit of the sweep curve as in our Theorem~\ref{thm:house}, with distinct sweeps, or ``houses'', then separated in time.

The transform $X\to \log_N(X)$, $X\in \N_0$, is also applied in the recent work~\cite{desmarais2025k}, there referred to as {\em stochastic Hopf--Cole transform}, and used to capture (in a spatial, discrete-time model) the large population limit of time evolution of the fitness distribution in terms of powers of $N$. 

\subsubsection*{Structure of the paper} After presenting the main results in Section~\ref{sec-results} we analyse in Sections~\ref{rescrb} and~\ref{pfsecGW} the asymptotic shape of the three middle phases of the sweep, as well as the duration of the  initial and the final phase. This prepares for the proof of part I of Theorem~\ref{thm:house}, which is completed in Section~\ref{pfpart1old}. In Section~\ref{secpfM1top} we zoom into phases 1 and 5 of the sweep, and complete the proof of part~II of Theorem~\ref{thm:house} based on fluctuation results for discrete-time embeddings that are provided in Section~\ref{pfpropM1}.

\section{Model and main result}\label{sec-results}

\subsection{Two-type Moran model with selection, and a logarithmic scaling of sweeps}\label{sec:model}
\begin{definition}\label{def:Moran-Model}
Fix $a, \varphi_N\geq 0$ and let $X^N=(X_0^N(t),X_1^N(t) , t \geq 0)$ be the counting process of the resident and mutant in the Moran model, that is we have $X_0^N(t)+ X_1^N(t)=N$, $\forall t \geq 0$ and with transition rates from $(k,N-k)$ given by
\begin{align}\begin{split}\label{Morandyn}
    (k, N-k) \to (k+1,N-k-1) &\quad \text{ at rate } \quad  \phantom{(1+a\varphi_N)}\frac{k(N-k)}N\\
    (k, N-k) \to (k-1,N-k+1) &\quad \text{ at rate } \quad  (1+a \varphi_N)\frac{k(N-k)}N.
    \end{split}
\end{align}
\end{definition}

\begin{remark}\label{remarkprel}
    a) Note that the process $X_1^N$ arises as a time-changed supercritical binary branching process, whose branching rate is slowed down by the factor $\frac {N-k}N$ when it is in state $k$. Likewise, $X_0^N$ arises as a time-changed subcritical binary branching process. This motivates a coupling to such processes that  we will apply in later proofs.

    b) Changing time by the state-dependent factor
    $\tfrac{k(N-k)}N$  takes $X_1^N$ into a random walk whose ratio of upward to downward jump probabilities equals $1+a\varphi_N$.  This leads to the well-known formula for the fixation probability
    \[ \P(\mathcal T_1^N<\infty| X_1^N(0) = 1) \sim \tfrac {a\varphi_N}{1+a\varphi_N} \mbox{ as } N\to \infty. \numberthis\label{fixprob} \]
\end{remark}

Throughout the present work we will assume that 
\begin{equation}\label{propphiN}
 \varphi_N \le 1 \quad\mbox{and} \quad   -\log_N(\varphi_N)\to b
 \quad \mbox{as } N\to \infty,
\end{equation}
for some $b \in [0,1)$. This includes the case $\varphi_N \equiv 1$, which was in the focus of~\cite{HGSTW24}. Note that~\eqref{propphiN} implies the existence of an $\eta < 1$ such that
\begin{equation}\label{lowerboundphi}
    \varphi_N \gg N^{-\eta} \quad \mbox{as } N\to \infty.
\end{equation}

\begin{definition}[Selection regimes]\label{def:moderate-selection}  We will call $\varphi_N$ the {\em scaling factor}, and $b$ the {\em exponent} of the selection.
    We say that selection is \emph{moderate}
    if $b\in(0,1)$, {\em quasi-strong} if $b=0$ and $\varphi_N \to 0$, and {\em strong} if $\varphi_N\equiv 1$. 
\end{definition}
  The case $s_N= \frac aN$, corresponding to $b=1$, is commonly referred to as \emph{weak selection}. In this regime, diffusion processes arise as scaling limits of $(\frac1NX^N(tN))_{t \ge 0}$, i.e.\ in the {\em evolutionary timescale} whose unit is $N$ generations (see \cite{ewens2004mathematical}, \cite{ethier2009markov}). Selective sweeps (and their impact on genetic hitchhiking) have been studied in the diffusion approximation by letting $a\to \infty$, see~\cite{EPW06} and references therein. 

Prototypical choices for the scaling factors with  moderate and quasi-strong selection are $\varphi_N=N^{-b}$  and $\varphi_N = \frac 1{\log N}$.
Typically, on the time-scale $\varphi_N^{-1}$, the frequencies $\frac1NX^N_1(t)$ are well approximated, upon survival of $X^N_1$, by a logistic function forming a selective sweep (cf.\ Figure~\ref{fig:single_invader}, top left).
However, we focus on the exponential growth and decline phases that live on a longer time-scale by the factor $\log N$. Scaling the population sizes by $\log_N$ exponential growth becomes linear, which motivates the following definition of our main object of interest.

\begin{definition}\label{defHN}
    The \emph{logarithmic population sizes} $H^N=(H^N_0,H^N_1)$ are defined by
    $$
      H^N_i(t)
        = \log_N^+\big(X^N_i(\tfrac{t\log N}{\varphi_N})\big),
        \ i\in\{0,1\},
    $$
    where $\log_N^+(\cdot):= \max\{\log_N(\cdot), 0\}$ denotes the positive part of the logarithm to the base $N$.

    Throughout we will assume that $(X_0, X_1)$ satisfies for $t\ge 0$ the Moran dynamics~\eqref{Morandyn} with $a>0$ and starting condition $(X_0(0), X_1(0)) = (N-1, 1)$ where selection is moderate  or quasi-strong in the sense of Definition~\ref{def:moderate-selection}.
    With a view towards part II of Theorem~\ref{thm:house} we extend $X^N$ to the negative time axis by putting $(X_0(t), X_1(t)) = (N,0)$ (and consequently $(H_0^N(t), H_1^N(t)) = (1,0)$) for $t<0$.
\end{definition}

\subsection{Convergence to the shape of a house}\label{mainresults}

The piecewise linear trajectories appearing in Figure~\ref{fig:single_invader} (bottom panels) are defined as
\begin{align}\label{def-house}
  h_0(t)
  &:= \begin{cases}
        1 & \text{for } t < \tfrac{1-b}a,\\
        2-(b+at)
          & \text{for }\tfrac{1-b}a\leq t<\tfrac{2(1-b)}a,\\
        0 & \text{for }t\geq \tfrac{2(1-b)}a.
      \end{cases}
  &\text{and}&&
  h_1(t)
  &:= \begin{cases}
       0 & \text{for } t < 0  \\b+at & \text{for }0\le t < \tfrac{1-b}a,\\
        1 & \text{for }t\geq\tfrac{1-b}a.
      \end{cases}
\end{align}
For $b=0$ (i.e.\ both for strong and for quasi-strong selection) the functions $h_i$ are continuous, and the graph of $\min(h_0,h_1)$ looks like a ``tent''. For $b \in (0,1)$, the functions $h_1$ and $h_0$ have a jump of height~$b$ in their respective jump times $t_1:= 0$ and $t_0:= \tfrac {2(1-b)}a$, so that the graph of $\min(h_0,h_1)$  takes the form of a ``house''. In any case, the functions $h_i$ are elements of
\begin{equation}\label{defD1}
    \mathcal D:=\{f:\R\to [0,1] \mid f \mbox{ is right continuous and has left limits}\},
\end{equation}
and the $H_i^N$ can be seen as $\mathcal D$-valued random variables.
Our main result states that,  for $i\in \{0,1\}$, the sequence $H_i^N$ converges, as $N\to \infty$, in probability to $h_i$
in the Skorokhod $M_1$~topology, one of the classical topologies
on $\mathcal D$. In Section~\ref{metricM1} we will recall a metric that generates the $M_1$~topology; 
for more background on the Skorokhod topologies see e.g.~\cite{K23} and references therein.
 Roughly stated, Theorem~\ref{thm:house} says that the scaled logarithmic frequency of the mutant (resp.\ the resident) population converges uniformly in probability to $h_1$ (resp. $h_0$) on the time segment where the limiting path is continuous, and  climbs over the  jump height of its limiting path in a nearly monotone way (i.e.\ without substantial backtracking). 
See Figure~\ref{fig:single_invader} (bottom left) for an illustration.
\begin{theorem}\label{thm:house}
    Let $(H^N_0,H^N_1)$ be as in Definition~\ref{defHN}.
    Then, conditional on the event of fixation $\{\mathcal T_1^N < \infty\}$, for $i\in\{0,1\}$, 
    \begin{itemize}
      \item[I.1)] the rescaled sequence of fixation times
        $\sigma_{\rm fix}^N:=\frac{\mathcal T_1^N}{ \varphi_N^{-1}\log N}$ converges, as $N \to \infty$, in probability to~$\frac{2(1-b)}{a}$,
      \item[I.2)] for any $\varepsilon >0$,  the sequence $H_i^N$ converges as $N\to \infty$ in probability to $h_i$, with respect to the uniform distance when restricted to the set $D^\eps_i$ defined as
  \begin{equation}\label{defsetsD}
  D_1^\eps:= \R\setminus [0, \eps), \qquad D_0^\eps:= \R \setminus\left(\tfrac{2(1-b)}a-\varepsilon, \tfrac{2(1-b)}a+\varepsilon \right),  
  \end{equation}
     \item[I.3)] in the case $b=0$ the uniform convergence in probability of $H_i^N$ to $h_i$  holds on $\R$,
     \medskip
     \item[II)] the sequence $(H_i^N)$, viewed as random elements of $\mathcal D$,
       converges, as $N\to \infty$ in probability to~$h_i$. Hereby, $\mathcal D$ is equipped with the Skorokhod $M_1$~topology.
     \end{itemize}
\end{theorem}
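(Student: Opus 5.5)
We split the sweep into the five phases described in the introduction and treat each phase by comparison with simpler processes. By Remark~\ref{remarkprel}a), as long as $X_1^N\le \delta N$ the mutant count is sandwiched between two supercritical binary branching processes. These have birth rates $1+a\varphi_N$ (up to factors $1$ and $1-\delta$) and death rate $1$, so their growth rates are $a\varphi_N(1+O(\delta))$ plus $O(\delta)$ corrections. The upper comparison process ignores the factor $(N-k)/N$, while the lower one has its branching slowed by $(1-\delta)$. Symmetrically, $X_0^N$ is sandwiched between subcritical processes once $X_0^N\le\delta N$.

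\emph{Phase 1.} For a slightly supercritical Galton--Watson/Yule-type process $Z$ with growth rate $s=a\varphi_N$, conditioned on survival, $s\,e^{-st}Z(t)\to W$ with $W$ exponential. This holds uniformly as $s\to0$, e.g.\ via the explicit linear birth--death generating function. Consequently the time to reach level $K/\varphi_N$ is $O(\varphi_N^{-1}\log(1/\eps))$ with high probability. This time is $o(\varphi_N^{-1}\log N)$, which gives the jump of $H_1^N$ from $0$ to $b$ at time $0$.

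\emph{Phase 2.} From $K/\varphi_N$ on, the martingale $e^{-st}Z(t)$ has relative fluctuations of order $(\varphi_N\cdot K/\varphi_N)^{-1/2}=K^{-1/2}$. Doob's inequality then gives $\log Z(t)=\log(K/\varphi_N)+st+O(1)$ uniformly up to level $\delta N$. After dividing by $\log N$, this is $H_1^N(t)=b+at+o(1)$ uniformly on $[\eps,(1-b)/a-\eps]$.

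\emph{Phase 3.} From $\delta N$ to $(1-\delta)N$, the logistic law of large numbers~\eqref{logistic} on time scale $\varphi_N^{-1}$ applies. For moderate selection it follows from a martingale/Gronwall estimate, because the noise is $O((N\varphi_N)^{-1/2})\to0$ by~\eqref{lowerboundphi}. It costs time $O(\varphi_N^{-1})$, which is negligible on the $\varphi_N^{-1}\log N$ scale.

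\emph{Phase 4.} Here $X_0^N$ descends from $\delta N$ to $K/\varphi_N$ as a subcritical process with rate $-a\varphi_N$, and the same martingale argument gives linear decline with slope $-a$ down to height $b$.

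\emph{Phase 5.} Extinction of a subcritical process started from $K/\varphi_N$ takes time $\varphi_N^{-1}(\log K+O_P(1))$. This uses the Gumbel-type extinction time of a subcritical birth--death process, whose tail is explicit. Hence the drop of $H_0^N$ from $b$ to $0$ occurs within $o(\varphi_N^{-1}\log N)$ time.

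Summing the durations gives I.1) with total time $2(1-b)/a$. Conditioning on fixation is handled by noting that fixation is asymptotically equivalent to survival of the lower comparison process and that~\eqref{fixprob} holds. I.2) follows by patching the uniform estimates, using monotonicity of $\log_N^+$ and the fact that $H_i^N$ stays within $o(1)$ of $1$ or $0$ outside the active phases. For $b=0$ the jumps vanish, and since $\log_N(1/\varphi_N)\to0$ the estimates extend across $[0,\eps)$, giving I.3).

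For II), we use the $M_1$ characterization via oscillation: convergence to $h_i$ in $M_1$ holds iff convergence is uniform away from the jump and the $M_1$-oscillation $\sup_{t_1<t<t_2}\mathrm{dist}(H(t),[H(t_1),H(t_2)])$ over small windows around the jump tends to zero. So it suffices to show that, in the window of length $\eps$ around $0$, $H_1^N$ never backtracks by more than $\eta$ in $\log_N$-scale. Likewise, around $2(1-b)/a$, $H_0^N$ never climbs back by more than $\eta$. We will work with the discrete-time embedded jump chain. For $H_1^N$, a backtrack means the walk, having reached level $N^{x}$, later returns to $N^{x-\eta}$ before time $\eps\varphi_N^{-1}\log N$. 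By the gambler's ruin formula with drift ratio $1+a\varphi_N$, this probability is at most $(1+a\varphi_N)^{-(N^x-N^{x-\eta})}$, which is tiny as soon as $N^x\gg 1/\varphi_N$. For smaller levels, $x\le b$, ruin probabilities are not small. Instead we use that, conditioned on survival, the walk is an $h$-transformed walk with drift bounded below. We then need a maximal inequality showing that the number of excursions below $N^{x-\eta}$ after first hitting $N^x$ is dominated by a geometric variable with success probability about $1-N^{-\eta}$. A union bound over a grid of $O(1/\eta)$ levels $x$ then completes the argument.

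Phase 5, where the decline $X_0^N$ passes through levels $N^{x}$ with $x<b$, is handled symmetrically: a subcritical walk started at $N^{x}$ reaches $N^{x+\eta}$ with probability at most about $N^{-\eta}$. This controlled backtracking near the jumps, performed uniformly over the critical window where drift and noise are comparable ($X\approx 1/\varphi_N$), is the main obstacle of the proof.
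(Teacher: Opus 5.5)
Your Part~I follows the paper's argument: the same five phases, explicit linear birth--death formulas for phases 1 and 5, Doob's inequality for $e^{-a\varphi_N t}Z(t)$ in phases 2 and 4, and a negligible phase 3. Your fixed $K,\delta$, sent to their limits at the end, replace the paper's $N$-dependent thresholds $\log N/\varphi_N$ and $N/\sqrt{\log N}$. In phase 3 you use the logistic law of large numbers, whereas the paper uses Chebyshev for $Z_1^N$ plus the lower bound $1/\sqrt{\log N}$ on the time-change factor.

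One point must be repaired: there is no pathwise sandwich. Dropping the factor $(N-k)/N$ raises birth \emph{and} death rates, so no monotone coupling exists. A lower process with birth rate $(1-\delta)(1+a\varphi_N)$ and death rate $1$, as you wrote it, is subcritical once $\delta>a\varphi_N$. So ``$O(\delta)$ corrections'' to the growth rate would destroy the drift $a\varphi_N\to0$. What your later phases actually use, and what the paper uses via \eqref{deftcf}, is the exact time change $X_1^N(t)=Z_1^N(\sigma_t)$ with $(1-\delta)t\le\sigma_t\le t$ while $X_1^N\le\delta N$.

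Part~II takes a genuinely different route. The paper shows that $Y_i^N$ is uniformly close to its running maximum (for $i=0$, that of its time reversal). It does this by comparing the conditioned walk with the Bessel-like walk $W^*_{1/2}$ (Lemma~\ref{driftcompar}) and invoking the law of the iterated logarithm of \cite{hambly2003law}. For the resident it also uses the time-reversal Lemma~\ref{reversallemma}. It then bounds $d_{M_1}$ directly via Lemma~\ref{lem:m1-sup} and explicit parametric representations.

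Your no-backtracking statement is exactly the content of Lemma~\ref{distYYbar} and \eqref{Y0bar}. You obtain it instead from hitting estimates and a union bound over $O(1/\eta)$ grid levels, and convert it to $M_1$ convergence via the oscillation characterization (see \cite{Whitt}, Ch.~12).

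The step you leave open (``a maximal inequality'') is exact and elementary. Its mechanism is not a drift bounded below, since below level $1/\varphi_N$ the $h$-transformed walk has drift only of order $1/k$. With $r=(1-p)/p<1$, the strong Markov property and \eqref{rayknight} give, for $\ell<m$,
\[
\P^{p,+}_m(\tau_\ell<\infty)=\frac{r^{m-\ell}(1-r^\ell)}{1-r^m}=\frac{r^{m-\ell}+\dots+r^{m-1}}{1+r+\dots+r^{m-1}}\le\frac{\ell}{m}
\]
uniformly in $p>\tfrac12$, so no case split at $x=b$ is needed. For the subcritical walk with $\rho=(1-p)/p>1$,
\[
\P^p_m(\tau_M<\tau_0)=\frac{\rho^m-1}{\rho^M-1}\le\frac mM .
\]
This handles the resident's wall without time reversal; conditioning on $\tau_0<\tau_N$ from levels $m=o(N)$ costs only a bounded factor. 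Each grid level then contributes at most $N^{-\eta/2}$.

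Your route is thus more elementary and self-contained: no LIL, no reversal, no domination lemma. The paper's running-maximum device avoids appealing to an $M_1$ characterization theorem and bounds $d_{M_1}$ itself. Your estimate could equally be fed into that device.
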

\begin{remark}\label{rem26}
\begin{enumerate}[a)]
    \item In the special case $\varphi_N \equiv 1$ (i.e. for strong selection), the assertions of Theorem~\ref{thm:house} follow from~\cite[Corollary~4.8]{HGSTW24}. 
    \item Part I.2) of Theorem~\ref{thm:house} implies that the increase of $H_1^N$ 
from height $0$ to height $b$ occurs quickly after time~$0$, and that the decrease of $H_0^N$ from $b$ to $0$
occurs quickly around time 
$\tfrac {2(1-b)}a$. This statement on the slimness of the house's walls will be sharpened in Section~\ref{pfstrat} based on Propositions~\ref{Zph1} and~\ref{Zph5}, as one of the ingredients of the proof of part II of Theorem~\ref{thm:house}.
    
\item Among the Skorokhod topologies, the one that is probably the best known and most widely used  is $J_1$. Since the maximal jump height of $H_i^N$ converges to $0$, while $h_i$ has a jump of positive height for $b >0$, one can easily see that $H_i^N$ cannot converge to $h_i$ in the $J_1$~topology. However, Proposition~\ref{propeastwest} will assert that the increase of $H_1^N$ (decrease of $H_0^N$) mentioned in item b) happens fairly regularly, 
with asymptotically negligible fluctuations in between. This will be key for the proof of the \mbox{$M_1$ convergence} of $H_i^N$ to~$h_i$.  At this point we cannot resist quoting a sentence from the preamble of \cite{Whitt}: {\em Thus, we would be so bold as to suggest that, if only
one topology on the function space $\mathcal D$ is to be considered, then it should be the $M_1$~topology.}
\end{enumerate}
\end{remark}

\subsection{Clonal interference in the Gerrish--Lenski regime with moderate selection}\label{CImoderate}
This subsection is a brief outlook to an extension of the results of~\cite{HGSTW24}, sharpening statements in Section~7.2 of that paper in the light of the present results.  For an illustration, see  the lower panels of Figure~\ref{fig:storyline}.

For some $\lambda > 0$, assume that at each of the times $T$ of a rate $\frac \lambda{\log N}$-Poisson process $\Pi^N$  on $\mathbb R_+$, a  beneficial mutation appears in the population, affecting a randomly chosen individual  among all the $N$ individuals alive at time $T$, and adding $A(T)\varphi_N$ to its fitness level, where the $A(T)$ are i.i.d.~$\mathbb R_+$-valued random variables with distribution $\gamma$ having a finite first moment.  Denoting by $X^N(M,t)$ the number of individuals alive at time $t$ that have fitness level $M$, let the replacement of a (randomly chosen) individual of fitness level $M'$ by a (randomly chosen) individual of fitness level~$M$ happen at rate $\frac 1N(1+(M-M')^+)X^N(M,t)X^N(M',t)$, i.e. neutral resampling per pair of individuals happens at rate $1/N$, while a higher fitness level conveys a linear bonus to the reproduction rate. Initially, we assume a homogenous population, with all individuals having fitness level 0.

Recalling the assumption~\eqref{propphiN}, we define $\widetilde \Pi^N:= (T_1^N,T_2^N, \ldots)$ as the point process consisting of all those $T\in \Pi^N$ for which the offspring of the mutant at time $T$ ever reaches the size $N^b \log N$ (and thus, having overcome the fluctuations caused by genetic drift, becomes a {\em contender} for becoming resident).
With $X_0^N$ being the size at time $t$ of the original resident, and (for $i=1,2,\ldots$) $X_i^N(t)$ being the size at time $t$ of the offspring of the mutant that appeared at time $T_i^N$,  define $$H_i^N(t) := \log_N^+X_i^N\left(\frac {t\log N}{\varphi_N}\right), \quad i=0,1,2,\ldots$$
Let $(\widehat A_i)_{i=1,2,\ldots}$ be an i.i.d.\ sequence of random variables whose distribution is the size-biasing of $\gamma$, and let  $(T_i)_{i=1,2,\ldots}$ be the points of a rate $\lambda$ Poisson point process on $\mathbb R_+$.

 For $b\in [0,1)$ and for given $(T_i, \widehat A_i)_{i=1,2,\ldots}$ let $(H_i)_{i=0, 1,2,\ldots}$ be a system of piecewise linear $[0,1]$-valued trajectories which evolves deterministically as follows  (cf.\ Figure~\ref{fig:storyline},  bottom right):
 
$\bullet$ \,$H_0$ starts at time $0$ at height 1 with slope 0.
  
 $\bullet$ \,For $i=1,2,\ldots$, $H_i(t) = 0$ for $t\in [0,T_i)$,  $H_i(T_i)=b$  and $H_i$ starts at time $T_i$ with slope~$\widehat A_i$.

$\bullet$\, Every trajectory that reaches height 1 with slope $v$ at some time $t$,  instantly changes its slope to~$0$ and transmits this kink to all trajectories that have a height $> b$ at time $t$, i.e.\ instantly lowers their slopes by $v$.

$\bullet$\, As soon as a trajectory reaches height $b$ from above, it jumps to height $0$, where it then remains forever.
\begin{conj} Assume $b\in(0,1)$, or $b=0$ and $\varphi_N \to 0$. Then, as $N\to \infty$, 
$$(H_i^N)_{i=0, 1,2,\ldots} \overset d \longrightarrow (H_i)_{i=0, 1,2,\ldots} \quad \mbox{ as random elements of the product space } \mathcal D(\mathbb R_+)^\mathbb N,$$
where the space $ \mathcal D$ is equipped with the Skorokhod $M_1$~topology.
\end{conj}

In~\cite{HGSTW24}  this conjecture has been proved for the case $\varphi_N := 1$, i.e.~for strong selection, where the distribution of $\widehat A_i$ is the biasing of $\gamma$ with the survival probability $\tfrac a{1+a}$. 
Since  $b=0$ in this case, the $H_i$ are piecewise linear {\em continuous} functions with compact support, the convergence is then even uniform.

\section{Proof of Theorem~\ref{thm:house} part  I}\label{pfpart1}
In this section we prepare for the proof of part I of Theorem~\ref{thm:house}, first describing the strategy and then stating a series of auxiliary results whose proofs will be given in  Section~\ref{pfsecGW}.
\subsection{Strategy of the proof}\label{pfstrat}
As indicated in Remark~\ref{remarkprel} a),  representations of $X_1^N$ and of $X_0^N$ as time-changes of linear birth-death processes (or binary branching Galton--Watson  processes) $Z_1^N$ and $Z_0^N$ will be helpful. Here,
\begin{align}\label{Zrates}\begin{split}Z_1^N &\mbox{ jumps from } k \mbox{ to } \begin{cases} k+1 \mbox{ at rate } (1+a\varphi_N)k\\
k-1 \mbox{ at rate } k,\end{cases}
\\
Z_0^N &\mbox{ jumps from } k \mbox{ to } \begin{cases} k+1 \mbox{ at rate } k\\
k-1 \mbox{ at rate } (1+a\varphi_N)k.\end{cases}
\end{split}
\end{align}
Up to the hitting time $\mathcal T^N_{1/\log N}$ (as defined in~\eqref{deftaueps}), the time-change factor $1-\tfrac{X_1^N}N$ is between $1$ and $1-\frac 1{\log N}$, so during that period $X_1^N$ will be quite accurately approximated by  
$Z_1^N$.
We will see that this process, when started in $Z_1^N(0)=1$ and conditioned to survival, reaches with high probability (w.h.p.) size $\tfrac {\log N}{\varphi_N}$ in a time which is of smaller order than $\tfrac {\log N}{\varphi_N}$. (Thus the quantity $\tfrac {\log N}{\varphi_N}$ takes here the role both of a critical height of $Z_1^N$ and a time-scaling factor, where in the critical height the $\log$ could be replaced by any  slowly varying function that increases to $\infty$.)
This can be seen as phase 1 of the sweep, turning out to be asymptotically negligible on the ``sweep timescale'' whose unit we take as $\tfrac {\log N}{\varphi_N}$.

We will then prove that, with $Z_1^N$ started in state $\tfrac {\log N}{\varphi_N}$, the sequence $\big(\log^{+}_N(Z_1^N(\tfrac {t\log N}{\varphi_N})), t\ge 0 \big)$
converges as $N\to \infty$  uniformly in probability to the function $t\mapsto b+at$; this will take care of phase~2 of the sweep. 

Now we turn to the period between the hitting times $\mathcal T_{1/\sqrt{\log N}}^N$ and $\mathcal T_{1-1/{\sqrt{\log N}}}^N$, which can be seen as phase 3 of the sweep. During that period the time change factor obeys
\begin{equation}\label{lowerbound}
    1-\tfrac{X_1^N}N \ge \tfrac1{{\sqrt{\log N}}}.
\end{equation}
We will prove that the time it takes the process $Z_1^N$ to reach $N(1-\tfrac 1{{\sqrt{\log N}}})$ when started in $\tfrac N {\sqrt{\log N}}$ is  $o\left(\tfrac {\sqrt{\log N}}{\varphi_N}\right) $  w.h.p., 
so,  with the lower bound~\eqref{lowerbound} on the time-change factor being taken into account, we will be able to conclude that the duration of phase 3 of the sweep is negligible compared to $ \frac{\log N}{\varphi_N}$. 

As soon as the invader's size $X_1^N$ has reached $N(1-\tfrac 1{{\sqrt{\log N}}})$ (which happens at time $\mathcal T_{1-1/{\sqrt{\log N}}}^N$), the original resident's size $X_0^N$ has come down to size $\tfrac N{ {\sqrt{\log N}}}$. Comparing $X_0^N$ to a driftless random walk we will show that w.h.p.\ after this time $X_0^N$ will never again reach the size $\tfrac N{(\log N)^{1/3}}$ before it gets extinct.

In the remaining period of the sweep we work with the subcritical binary branching GW process~$Z^N_0$ whose  jump rates are given in~\eqref{Zrates}. The process $Z^N_0$ is connected to $X_0^N$ by the time-change factor $1-\tfrac{X_0^N}N$. As long as $X_0^N$ is not above  size $\tfrac N{(\log N)^{1/3}}$, this time-change is between $1$ and $1-\frac 1{(\log N)^{1/3}}$, so in phases 4 and 5 we can safely approximate $X_0^N$ by~$Z^N_0$.

Two features of $Z_0^N$ are now crucial: a ``Malthusian'' (law-of-large-numbers) behaviour until (slightly above) the size $\frac 1{\varphi_N}$ (in phase 4), followed by  a quick surrender to the fluctuations (in phase~5). More specifically, 

(i) when started in $\tfrac N{\log N}$ and stopped as soon as it comes down to $\tfrac {\log N}{\varphi_N}$, the logarithmic transforms $\big(\log^{+}_N(Z_0^N(\frac {u\log N}{\varphi_N})), u \ge 0\big)$ converge as $N\to \infty$ uniformly in probability to the function $u \mapsto 1-au$, $0\le u \le \tfrac {1-b}a$,

(ii) when started in $\tfrac {\log N}{\varphi_N}$, the time to extinction of $Z_0^N$ is of smaller order than $\tfrac {\log N}{\varphi_N}$.

\subsection{The accompanying Galton--Watson processes in phases 1 -- 5} \label{rescrb}In this section we formulate the results on the  Galton--Watson processes $Z_1^N$ and $Z_0^N$ (with jump rates given in~\eqref{Zrates})  that were announced in Section~\ref{pfstrat}. 
 As         before, $a$ is a strictly positive number, and the sequence of scaling factors $\varphi_N$ obeys~\eqref{propphiN} with $b \in [0,1)$. We put
\begin{equation}\label{def:TNk}
    T_k^N:=\inf\{t\ge 0\mid Z_1^N(t) =k\}.
\end{equation}
Each of the following five propositions refers to a phase of the sweep. For example, Proposition~\ref{Zph2}
tells that phase 2 of the sweep (with an asymptotically linear increase
of the logarithmic frequency of the mutant) starts after a random time delay whose duration is, according  to Proposition~\ref{Zph1}, asymptotically negligible compared to
the duration of the sweep. This random time delay is similar to the one appearing in
~\cite{barbour2015escape}
even though the thrust of that paper is different from ours.
The proofs of the following results  will be given in Section~\ref{pfsecGW}.
\begin{prop}[Phase 1]\label{Zph1}
        Assume that $Z_1^N$ starts in $1$ and is conditioned under survival, i.e.\ under the event $\{T_0^N =\infty\}$. Then
    \begin{equation}      \frac{\varphi_N}{\log N} \,T_{\big\lfloor\tfrac {\log N}{\varphi_N}\big\rfloor}^N \to 0 \,\,\mbox{ in probability  as } N\to \infty.
    \end{equation}
\end{prop}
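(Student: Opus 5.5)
We have to show that a supercritical linear birth--death process with birth rate $(1+a\varphi_N)$, death rate $1$, started in $1$ and conditioned on survival, reaches $K_N:=\lfloor \log N/\varphi_N\rfloor$ within time $o(\log N/\varphi_N)$. Write $s:=a\varphi_N$ and $p_N:=\frac{s}{1+s}$ for the survival probability. We use the classical decomposition of a supercritical branching process conditioned on survival. Individuals with an infinite line of descent form a Yule process $Y^N$ with branching rate $(1+s)p_N=s$. Hence, under $\{T^N_0=\infty\}$, $Z_1^N(t)\ge Y^N(t)$ for all $t$ with $Y^N(0)=1$, and $T^N_{K_N}\le \tau_N:=\inf\{t: Y^N(t)\ge K_N\}$. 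The Yule process only moves upwards, so it hits every integer and $\tau_N$ is the hitting time of $K_N$.

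We now check whether this crude bound suffices. Since $\tau_N$ is a sum of independent exponentials with rates $sk$, $k=1,\dots,K_N-1$,
\[
\E[\tau_N]=\frac1s\sum_{k<K_N}\frac1k\approx \frac{\log K_N}{a\varphi_N}=\frac{\log\log N+\log(1/\varphi_N)}{a\varphi_N}.
\]
Because $\log(1/\varphi_N)\sim b\log N$, this is of the same order as $\log N/\varphi_N$ when $b>0$. The Yule bound alone is therefore too weak: it loses exactly the time that the conditioned process spends while it is small, before the ``$1/s$ acceleration'' takes effect. We must instead use that $Z_1^N$ conditioned on survival jumps from $1$ to size about $1/s$ in time $O(1/\varphi_N\cdot\text{something small})$. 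It does so near-critically, in time of order $1/s$ for typical surviving paths; indeed a critical process conditioned to reach height $m$ does so in time of order $m$.

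The plan is therefore to split at the level $M_N:=\lceil \omega_N/s\rceil$, where $\omega_N\to\infty$ slowly (for instance $\omega_N=\log\log N$).

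\emph{Step 1.} Consider the embedded jump chain, which is a random walk with up-probability $\frac{1+s}{2+s}$, conditioned on survival. Its probability of reaching $M_N$ before $0$ is at least $p_N$. Conditioning on survival is the Doob $h$-transform with $h(k)=1-(1+s)^{-k}$. Under this transform the walk has drift at least of order $1/k$ for $k\lesssim 1/s$ and of order $s$ beyond, so it reaches $M_N$ after $O(M_N^2)$ steps in expectation. This is the step we expect to be the main obstacle: we need a clean bound, for example via a Lyapunov function or by comparison with a Bessel-3 walk. The holding time at state $k$ is exponential with rate $(2+s)k$. Summing these holding times along a path with $O(M_N^2)$ steps that mostly sit at heights of order $M_N$ gives a total time of order $M_N$. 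Concretely, we aim to show $\E[T^N_{M_N}\mid T_0^N=\infty]\le C\omega_N/s$, for instance by computing the expected conditioned occupation time of each state $k$ via the Green function of the $h$-transformed chain. Being at state $k$ costs time $\frac{1}{(2+s)k}$ per visit, and the expected number of visits is $O(\min(k,1/s))$, so the sum over $k\le M_N$ is $O(M_N)$. Hence $\frac{\varphi_N}{\log N}T^N_{M_N}\to0$ in probability by Markov's inequality, because $\omega_N/(a\log N)\to0$.

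\emph{Step 2.} From $M_N$ to $K_N$ we use the strong Markov property at $T^N_{M_N}$. Conditioned on survival, the process dominates a Yule process of rate $s$ started from the number of immortal individuals at that time. Given $Z=M_N$ under survival, this number is binomial-like with mean about $M_Ns\ge\omega_N$, so it is at least $\omega_N/2$ w.h.p. A Yule process started from $j$ needs time about $\frac1s\log(K_N/j)$ to reach $K_N$, and the crude bound $\frac1s\sum_{k=j}^{K_N}\frac1k$ suffices here. Since $K_N/M_N=\log N/(a\omega_N)$, the required time is $\frac{1}{s}O(\log\log N)=o(\log N/\varphi_N)$, and concentration again follows from Markov's inequality. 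Combining the two steps proves the proposition.
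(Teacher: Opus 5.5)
\textbf{There is a genuine gap in Step 2, and it is the same obstruction you diagnosed at the start.} From $M_N\approx\omega_N/s$ individuals, the number $j$ of immortal individuals is about $M_Np_N\approx\omega_N$, as you say. So the Yule process you compare with starts at $j\approx\omega_N$, not at $M_N$. Its hitting time of $K_N\approx a\log N/s$ is therefore about $\frac1s\log\frac{K_N}{j}\approx\frac1s\log\frac{a\log N}{s\,\omega_N}$. Since $\log(1/s)\sim b\log N$, this is $\sim\frac{b}{a}\cdot\frac{\log N}{\varphi_N}$, not $o(\log N/\varphi_N)$. Your step ``since $K_N/M_N=\dots$'' silently replaces $K_N/j$ by $K_N/M_N$. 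The skeleton is a factor $\approx s$ smaller than $Z_1^N$ at all times, so passing to it costs exactly the $\log(1/s)/s$ that ruined the crude bound. As written, the argument proves the proposition only for $b=0$, where your initial crude Yule bound already suffices.

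\textbf{The fix is to drop Step 2: your Step 1 estimate (which is correct) works all the way up to $K_N$.} With $r=1/(1+s)$ and $p=\frac{1+s}{2+s}$, the walk killed at $0$ has $G(1,k)=1/p$. Hence the $h$-transformed chain has $G^h(1,k)=\frac{1-r^k}{p(1-r)}\le\frac1p\min\bigl(k,\frac{1+s}{s}\bigr)$. Since $h$ is harmonic, the transform leaves the holding rates $(2+s)k$ unchanged, and using $p(2+s)=1+s$,
\[\E\bigl[T^N_{K_N}\,\big|\,T^N_0=\infty\bigr]\le\sum_{k<K_N}\frac{\min(k,(1+s)/s)}{(1+s)k}\le\frac{C+\log(sK_N)}{s}=O\Bigl(\frac{\log\log N}{\varphi_N}\Bigr).\]
Markov's inequality then concludes.

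Alternatively, keep Step 2 but work with the whole population rather than the skeleton. From $M_N\gg 1/s$, survival has probability $\to1$, so the conditioning is harmless. Doob's inequality for $e^{-st}Z_1^N(t)$, as in Lemma~\ref{lem:CMT}, then gives $Z_1^N(t)\ge\frac12 M_Ne^{st}$ w.h.p., so $K_N$ is reached within time $O(\log\log N/s)$.

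Either repair gives a route different from the paper's. The paper uses the explicit geometric law of $Z_1^N(t_N)$ given $\{Z_1^N(t_N)\ge1\}$ (Lemma~\ref{lem:basic-props}) at the single time $t_N=\sqrt{\log N}/\varphi_N$. It then applies the Markov property at $t_N$, together with monotonicity of the survival probability, to pass from $\{Z_1^N(t_N)\ge 1\}$ to conditioning on survival. That yields a short closed-form computation. Your Green-function route instead bounds the conditioned expected hitting time directly and shows where the time is spent: $O(1/s)$ below level $1/s$, plus $\frac1s\log(sK_N)$ above it.
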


\begin{prop}[Phase 2]\label{Zph2}
    Assume that $Z_1^N$ starts in $\big\lfloor\tfrac {\log N}{\varphi_N}\big\rfloor$.
    Then for any $T$, $\delta>0$
    \begin{equation}\label{unifph2}
    \lim_{N\to\infty} \P \Big(  \sup_{t \leq T} \Big| \log_N^+(Z^N_{1} (t \varphi_N^{-1} \log N)) - (b+at) \Big| >\delta \Big)
        = 0,    
    \end{equation}
    i.e.\ $\big(\log_N^+(Z^N_{1} (t \varphi_N^{-1} \log N)),t\in[0,T]\big)$ converges uniformly in probability to $(b+at)_{t\in[0,T]}$.
\end{prop}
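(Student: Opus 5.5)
We plan to prove Proposition~\ref{Zph2} through the standard martingale normalisation of a supercritical linear birth--death process. Write $s_N:=a\varphi_N$ for the Malthusian rate of $Z_1^N$ and $z_N:=\lfloor \log N/\varphi_N\rfloor$ for its starting value. The process
\[
M^N(t):=e^{-s_N t}Z_1^N(t)/z_N
\]
is a martingale with $M^N(0)=1$. We will show that $\sup_{t\le T\log N/\varphi_N}|M^N(t)-1|\to 0$ in probability. Granting this, on the good event we have
\[
Z_1^N(t\log N/\varphi_N)=z_N\,N^{at}(1+o(1))
\]
uniformly in $t\le T$. Taking $\log_N$ gives
\[
\log_N Z_1^N=\log_N z_N+at+o(1)=b+at+o(1)
\]
uniformly, because $\log_N(\log N/\varphi_N)\to b$ by \eqref{propphiN}. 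This quantity is bounded below by $b-o(1)\ge 0$ in the limit, so the positive part causes no trouble; for $b=0$ the limit is still $\ge0$, and $\log_N^+$ is $1$-Lipschitz.

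The key step is the quadratic variation estimate for $M^N$. Jumps of $Z_1^N$ occur at total rate $(2+a\varphi_N)Z_1^N$, each with size $\pm1$. Therefore
\[
\E\langle M^N\rangle_\infty=\int_0^\infty e^{-2s_N u}(2+s_N)\E[Z_1^N(u)]\,du/z_N^2 .
\]
Using $\E Z_1^N(u)=z_Ne^{s_N u}$, this equals $(2+s_N)/(s_N z_N)\le C/(a\log N)\to0$. Doob's $L^2$ maximal inequality then gives $\P(\sup_{t\ge0}|M^N(t)-1|>\eta)\le 4C/(\eta^2 a\log N)\to0$ for every $\eta>0$, uniformly over all times, not only up to $T\log N/\varphi_N$. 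This is precisely why the starting level $\log N/\varphi_N$ was chosen: the ratio $1/(s_N z_N)$ must vanish.

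The main obstacle is a subtle point near $t=0$. We need $\log_N z_N\to b$, and the $(1+o(1))$ factor needs to be bounded away from $0$. Both hold on the event $\{\sup|M^N-1|\le 1/2\}$, where $\log_N$ of the factor lies in $[\log_N(1/2),\log_N(3/2)]\to0$. The remaining work is the bookkeeping with floors and the fact that $T$ is fixed, so the argument stays short. If the paper prefers not to use $\varphi_N\le 1$ to bound $2+s_N$, the bound $2+a$ suffices.
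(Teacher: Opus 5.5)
Your proposal is correct and follows essentially the paper's route. The paper deduces Proposition~\ref{Zph2} from Lemma~\ref{lem:CMT}\,a) with $\beta_N=\log_N\lfloor \log N/\varphi_N\rfloor$, and that lemma's proof is the same argument: Doob's inequality for the martingale $e^{-r_Nt}Z^N(t)$, the variance formula \eqref{classvar}, and $|\log x|\le 2|x-1|$ for $x\ge\frac12$, with $Z^N(0)\varphi_N\to\infty$ (your $s_Nz_N\asymp a\log N$) as the key input. The differences are cosmetic: you use a relative threshold over an infinite horizon (bound of order $1/\log N$), whereas the paper uses the absolute threshold $\frac12N^{(2\beta_N+b_N)/3}$ on $[0,T\varphi_N^{-1}\log N]$ (bound of order $(\log N)^{-1/3}$).
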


\begin{prop}[Phase 3]\label{Zph3}
  Assume that $Z_1^N$ starts in $\big\lfloor \frac N{\sqrt{\log N}} \big\rfloor$.
  Then
  $$
    \frac{\varphi_N}{\sqrt{\log N}}\, T_{\big\lfloor N\big(1-\tfrac 1{\sqrt{\log N}}\big)\big\rfloor}^N
      \to 0 \,\,\mbox{ in probability  as } N\to \infty.
  $$
\end{prop}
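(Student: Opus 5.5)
We plan to prove Proposition~\ref{Zph3} by studying the process $W^N(t):=e^{-a\varphi_N t}Z_1^N(t)$, which is a martingale for the supercritical birth--death process with rates~\eqref{Zrates}, since the drift of $Z_1^N$ at state $k$ is $a\varphi_N k$. Put $z_0:=\lfloor N/\sqrt{\log N}\rfloor$ and $z_1:=\lfloor N(1-1/\sqrt{\log N})\rfloor$. The deterministic growth $z_0e^{a\varphi_N t}$ reaches $z_1$ after the time
\[
t^*_N:=\frac{1}{a\varphi_N}\log\frac{z_1}{z_0}\sim \frac{\log\log N}{2a\varphi_N},
\]
and $t^*_N=o\big(\sqrt{\log N}/\varphi_N\big)$. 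It therefore suffices to show that, w.h.p., $Z_1^N$ stays close to $z_0e^{a\varphi_N t}$ in a multiplicative sense up to the time $2t^*_N$, say. This forces $T^N_{z_1}\le 2t^*_N$ w.h.p., and that yields the claim.

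The key step is a second-moment bound for $W^N$. The predictable quadratic variation has rate $e^{-2a\varphi_N t}(2+a\varphi_N)Z_1^N(t)$, so
\[
\E\big[(W^N(t)-z_0)^2\big]=\int_0^t e^{-2a\varphi_N u}(2+a\varphi_N)\,z_0e^{a\varphi_N u}\,du\le \frac{(2+a\varphi_N)\,z_0}{a\varphi_N}.
\]
By Doob's $L^2$ inequality, $\P\big(\sup_{t\ge0}|W^N(t)-z_0|>\tfrac12 z_0\big)\le C/(z_0\varphi_N)$. This tends to $0$ because $z_0\varphi_N\ge N^{1-\eta}/\sqrt{\log N}\to\infty$ by~\eqref{lowerboundphi}. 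On the complementary event we have $Z_1^N(t)\ge \tfrac12 z_0e^{a\varphi_N t}$ for all $t$. At $t=t^*_N+\frac{\log 2}{a\varphi_N}$ this gives $Z_1^N(t)\ge z_1$. Since $Z_1^N$ moves by unit jumps, it must have hit $z_1$ before that time, so $T^N_{z_1}\le t^*_N+\frac{\log 2}{a\varphi_N}$. Multiplying by $\varphi_N/\sqrt{\log N}$ gives $O(\log\log N/\sqrt{\log N})\to0$, which is what we need.

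I expect the only delicate point to be the technical justification of the martingale and second-moment computation. The process is nonexplosive with $\E Z_1^N(t)=z_0e^{a\varphi_N t}$, so a localization argument (stopping at large levels and using monotone convergence) handles this routinely. We also do not need any upper control, because only a lower bound on the hitting time's complement is used. If one prefers to avoid $L^2$ on the infinite horizon, one can apply Doob's inequality on $[0,2t^*_N]$ instead; the same bound holds.
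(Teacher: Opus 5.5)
Your argument is correct and is essentially the paper's second-moment proof. The paper applies Chebyshev's inequality with the variance formula \eqref{classvar} at the single time $t_N=(\log N)^{1/3}/(a\varphi_N)$ to get $Z_1^N(t_N)\ge N$ w.h.p.; you instead use Doob's inequality for the martingale $e^{-a\varphi_N t}Z_1^N(t)$ (as in the proof of Lemma~\ref{lem:CMT}) to get pathwise control, which yields the sharper bound $T^N_{z_1}=O(\log\log N/\varphi_N)$. The maximal inequality is not actually needed: Chebyshev at the single time $t^*_N+\log 2/(a\varphi_N)$, where the mean equals $2z_1$, already shows $Z_1^N$ is below $z_1$ there with probability $O(1/(z_0\varphi_N))$, and your unit-jump argument then finishes the proof.
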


\begin{prop}[Phase 4]\label{Zph4}
  Assume that $Z^N_0$ starts in $\big\lfloor \tfrac {N}{\sqrt{\log N}}\big\rfloor$. Let $(\eps_N)_{N\in\N}$ be a non-negative sequence tending to 0 as $N\to\infty$. 
    Then, for  $t_{\eps_N}^N := \frac{1-b-\eps_N}{a}$ and for any $\delta>0$,
    \[ \lim_{N\to\infty}  \P \Big(  \sup_{t \leq t_{\eps_N}} \Big| \log_N^+(Z_0^N(t \varphi_N^{-1} \log N) ) - (b-at) \Big| >\delta \Big)=0. \]
    In particular, for any $u<\frac{1-b}{a}$, $\big(\log_N^+(Z_0^N(t \varphi_N^{-1} \log N)), t \in [0,u] \big)$ converges uniformly in probability to $(1-at)_{t\in [0,u]}$ as $N\to\infty$. 
\end{prop}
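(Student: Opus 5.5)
The plan is to compare $Z_0^N$ with its mean via the exponential martingale. Since $Z_0^N$ is a subcritical linear birth--death process with net drift $-a\varphi_N k$ in state $k$,
$$M^N(t):=e^{a\varphi_N t}Z_0^N(t)$$
is a nonnegative martingale with $M^N(0)=Z_0^N(0)=:z_N=\lfloor N/\sqrt{\log N}\rfloor$. Its predictable quadratic variation has expectation
$$\E\big[\langle M^N\rangle_t\big]=(2+a\varphi_N)\,z_N\int_0^t e^{a\varphi_N s}\,\d s\le \frac{C z_N e^{a\varphi_N t}}{\varphi_N}.$$
The target curve is read off from the mean: at $t$ on the sweep timescale, $\log_N\big(z_N e^{-a t\log N}\big)=1-at+o(1)$, uniformly in $t$. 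This is the statement we aim for, namely $(1-at)$; the display in the proposition should read $1-at$ rather than $b-at$.

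\emph{Step 1 (bulk of phase 4).} Fix $\delta>0$ and let $u_\delta:=\frac{1-b-\delta/2}{a}$. By Doob's $L^2$ maximal inequality,
$$\P\Big(\sup_{s\le u_\delta\log N/\varphi_N}|M^N(s)-z_N|>\lambda\Big)\le \frac{C z_N N^{a u_\delta}}{\varphi_N\lambda^2}.$$
We would rather control relative errors uniformly, so we split $[0,u_\delta]$ into $O(\log N)$ blocks on the real time axis, each of length $1/(a\varphi_N)$, and apply Doob on each block. On the $j$-th block the mean is $m_j\approx z_N e^{-j}$. Taking the deviation threshold $\lambda_j=\tfrac12 m_j e^{j}$ for $M^N$ (equivalently, $\tfrac12 m_j$ for $Z_0^N$), the failure probability on that block is at most
$$\frac{C}{\varphi_N m_j}.$$
On $[0,u_\delta]$ we have $m_j\ge N^{b+\delta/2-o(1)}$ and $\varphi_N=N^{-b+o(1)}$, so each term is $\le N^{-\delta/2+o(1)}$. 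Summing over the $O(\log N)$ blocks still gives a bound tending to $0$. On the complementary good event, $Z_0^N$ stays within a factor $2$ of its mean on every block, which yields uniform convergence of $\log_N^+ Z_0^N$ to $1-at$ on $[0,u_\delta]$. This already gives the ``in particular'' part, since any fixed $u<\frac{1-b}a$ lies below $u_\delta$ for small $\delta$.

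\emph{Step 2 (the final stretch $[u_\delta,t^N_{\eps_N}]$).} On this stretch the target lies in $[b+\eps_N,b+\delta/2]$, so it suffices to show two bounds w.h.p.: $Z_0^N\le N^{b+3\delta/2}$ and $Z_0^N\ge N^{b+\eps_N-\delta/2}$. The upper bound is straightforward. Since $Z_0^N$ is a nonnegative supermartingale, the maximal inequality gives
$$\P\Big(\sup_{s\ge u_\delta}Z_0^N(s)\ge N^{\delta}Z_0^N(u_\delta)\Big)\le N^{-\delta},$$
and by Step 1, $Z_0^N(u_\delta)\le N^{b+\delta/2+o(1)}$. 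For the lower bound we use the branching property. Write $Z_0^N$ after time $u_\delta$ as a sum of $Z_0^N(u_\delta)$ independent subcritical families. For elapsed time $r$, a single family survives with probability $\asymp \varphi_N e^{-a\varphi_N r}$, and conditionally on survival its size is of order $1/\varphi_N$. Hence it suffices that the number of surviving families at time $t^N_{\eps_N}$, a binomial variable with mean of order
$$\varphi_N z_N e^{-a t^N_{\eps_N}\log N}=\varphi_N N^{b+\eps_N}/\sqrt{\log N},$$
tends to infinity. Monotonicity in time of the surviving-family count then gives the lower bound uniformly on the stretch.

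\emph{Main obstacle.} I expect the main obstacle to be this last lower bound at the very end of phase 4. The count of surviving families is governed by $\varphi_N N^{b+\eps_N}/\sqrt{\log N}$, and this quantity diverges only if $\eps_N$ does not tend to $0$ too fast relative to the slowly varying corrections in $\varphi_N N^{b}$ and the $\sqrt{\log N}$. I would first try to exploit the $\delta$-slack to avoid this: it suffices to show $Z_0^N(t^N_{\eps_N})\ge N^{b-\delta/2}$, and a single surviving family of size of order $1/\varphi_N$ already provides $N^{b-o(1)}$. It therefore remains to show that at least one family survives up to time $t^N_{\eps_N}$ w.h.p. If the expected count above does not diverge for the given $(\eps_N)$, I would impose the mild growth condition on $\eps_N$ making it diverge. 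This is the point where the argument must be checked against the precise choice of $\eps_N$ used later in Section~\ref{pfpart1old}.
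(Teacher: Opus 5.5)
Your Step~1 is the paper's argument. The paper obtains Proposition~\ref{Zph4} from Lemma~\ref{lem:CMT}\,b) with $N^{\beta_N}=N/\sqrt{\log N}$ (so $\beta=1$). That proof is Doob's $L^2$ inequality for $e^{a\varphi_N t}Z_0^N(t)$ on the whole interval, with threshold $\tfrac12 N^{\beta_N-\eps_N/3}$, followed by $|\log x|\le 2|x-1|$. You are right that the display should read $1-at$. You are also right that the statement needs a growth condition on $\eps_N$. Take $\eps_N\equiv 0$, $\varphi_N=N^{-b}$ and $b>0$. The expected number of families of the $z_N$ initial individuals still alive at real time $t^N_0\varphi_N^{-1}\log N$ is about $a/\sqrt{\log N}\to 0$. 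So w.h.p.\ $\log_N^+Z_0^N=0$ there, while the target is $b$. The paper's lemma does assume $N^{-\eps_N}\to 0$, with endpoint $T_N=(\beta_N-b_N-\eps_N)/a$; this hypothesis is missing from the proposition. Rewritten in terms of $t^N_{\eps_N}=(1-b-\eps_N)/a$, it is exactly your condition $\varphi_N N^{b+\eps_N}/\sqrt{\log N}\to\infty$.

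The flaw is in Step~2, which you do not need. $M^N$ is already normalized by the mean, so one Doob application to $\sup_{s\le S}|M^N(s)-z_N|$ with threshold $\tfrac12 z_N$ gives $Z_0^N(s)\in[\tfrac12,\tfrac32]\,z_N e^{-a\varphi_N s}$ for all $s\le S$ simultaneously. The failure probability is $O\big(1/(\varphi_N z_N e^{-a\varphi_N S})\big)$. No blocks are needed. Even with blocks, your bounds $C/(\varphi_N m_j)$ form a geometric series dominated by its last term, so the crude $N^{-\delta/2}$ margin that made you stop at $u_\delta$ is unnecessary. With $S=t^N_{\eps_N}\varphi_N^{-1}\log N$ the bound is $O\big(\sqrt{\log N}/(\varphi_N N^{b+\eps_N})\big)$. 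This vanishes under the growth condition you identified, which settles the whole interval at once.

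Your Step~2 lower bound, by contrast, does not work as written. A nonincreasing count of surviving families only shows that $Z_0^N$ stays at least equal to that count on $[u_\delta,t^N_{\eps_N}]$. The count's mean is $N^{o(1)}$ when $\eps_N\to 0$, so this does not give $Z_0^N\ge N^{b-\delta/2}$ uniformly in time. For that you would need a uniform-in-time lower bound on the sizes of the surviving families, which you do not supply. Your upper bound $N^{b+3\delta/2}$ is also too weak near the end, where the target is $b+\eps_N$; the slack would have to be retuned.
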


\begin{prop}[Phase 5]\label{Zph5}
Assume that $Z^N_0$ starts in $\big\lfloor \tfrac {\log N}{\varphi_N}\big\rfloor$, and denote the extinction time of $Z_0^N$ by $\widetilde T_0^N$. Then
    \begin{equation}      \frac{\varphi_N}{\log N} \,\widetilde T_0^N \to 0 \,\,\mbox{ in probability  as } N\to \infty.
    \end{equation}
\end{prop}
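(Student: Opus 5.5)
The plan is to reduce the extinction time of the subcritical linear birth--death process $Z_0^N$ to independent copies started from a single individual, via the branching property. Started from $k:=\lfloor \tfrac{\log N}{\varphi_N}\rfloor$ individuals, $Z_0^N$ is the sum of $k$ independent copies $Z^{(1)},\dots,Z^{(k)}$, each started in $1$ with the rates \eqref{Zrates}. Hence $\widetilde T_0^N=\max_{j\le k}\tau_j$, where the $\tau_j$ are i.i.d.\ extinction times of a single-ancestor process. It therefore suffices to show, for every fixed $\delta>0$ and $t_N:=\delta\,\tfrac{\log N}{\varphi_N}$, that
\begin{equation*}
\P\big(\widetilde T_0^N>t_N\big)\le k\,\P\big(Z^{(1)}(t_N)>0\big)\longrightarrow 0 .
\end{equation*}

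For the single-ancestor survival probability I would use the explicit generating function of the linear birth--death process. With birth rate $\lambda=1$, death rate $\mu=1+a\varphi_N$ and $r:=\mu-\lambda=a\varphi_N$, the classical formula reads
\begin{equation*}
\P\big(Z^{(1)}(t)>0\big)=\frac{r}{\mu e^{rt}-\lambda}.
\end{equation*}
A cruder alternative is the first-moment bound $\E[Z^{(1)}(t)]=e^{-rt}$, but the explicit formula is better here. At $t=t_N$ we have $e^{rt_N}=N^{a\delta}\to\infty$, so the bound behaves like $a\varphi_N N^{-a\delta}$ up to a factor $1+o(1)$. Multiplying by $k$ gives
\begin{equation*}
k\,\P\big(Z^{(1)}(t_N)>0\big)\lesssim \frac{\log N}{\varphi_N}\cdot a\varphi_N N^{-a\delta}=a\,\log N\,N^{-a\delta}\to 0 .
\end{equation*}
This proves the claim for every fixed $\delta>0$.

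The only point needing care is to use the sharp survival probability rather than Markov's inequality on $\E[Z_0^N(t)]$. The first-moment bound would only give $k e^{-rt_N}=\tfrac{\log N}{\varphi_N}N^{-a\delta}$, which need not tend to zero when $\varphi_N\approx N^{-b}$ with $a\delta<b$. The factor $r=a\varphi_N$ in the survival probability (the subcritical analogue of \eqref{fixprob}) is what cancels the $1/\varphi_N$ in $k$. To avoid quoting the formula, one can derive it by solving the Kolmogorov backward equation $\tfrac{\d}{\d t}q(t)=\lambda q(t)^2-(\lambda+\mu)q(t)+\mu$ for the extinction probability $q(t)=\P(Z^{(1)}(t)=0)$, which is a Riccati equation with explicit solution. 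I expect no further obstacle.
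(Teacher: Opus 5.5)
Your proof is correct and matches the paper's argument: both use the branching property to reduce to $\lfloor \log N/\varphi_N\rfloor$ independent single-ancestor copies, together with the explicit survival probability $\frac{a\varphi_N}{(1+a\varphi_N)e^{a\varphi_N t}-1}$ from Lemma~\ref{lem:basic-props}, whose factor $\varphi_N$ cancels the $1/\varphi_N$ in the number of ancestors. The differences are cosmetic: the paper takes $t_N=\sqrt{\log N}/\varphi_N$, which gives the slightly sharper bound $\widetilde T_0^N=O(\sqrt{\log N}/\varphi_N)$ w.h.p., and it bounds the exact product $(1-p)^{j_N}$ where you use a union bound.
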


\subsection{Proof of Propositions~\ref{Zph1}--\ref{Zph5}}\label{pfsecGW}

\subsubsection{Proof of Propositions~\ref{Zph1} and~\ref{Zph5}}
We first recall an elementary result (whose proof, building on~\cite[Sec. III.5]{athreya1972branching}, can be found in~\cite[p.~1394]{HJR20}). 
\begin{lemma}\label{lem:basic-props}
    Let $Z=(Z(t), t \geq 0)$ be a continuous-time binary Galton--Watson process with birth rate $b$ and death rate $d$  being strictly positive such that $b \neq d$, and assume $Z(0)=1$. Then for $t\ge0 $
    \begin{equation}
                \P(Z(t)\geq j)
             = \frac{b-d}{b-de^{-(b-d)t}}\cdot\Big(1-\frac{b-d}{be^{(b-d)t}-d}\Big)^{j-1}.
    \end{equation}
\end{lemma}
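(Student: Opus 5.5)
The natural route is through the probability generating function of $Z(t)$. For a linear birth--death process started from one individual, Kolmogorov's backward equation for $F(s,t):=\E[s^{Z(t)}]$ reads
\[
\partial_t F = b F^2 - (b+d)F + d ,\qquad F(s,0)=s .
\]
This is a Riccati equation with constant coefficients and the two fixed points $1$ and $d/b$, so it can be solved explicitly by separation of variables (partial fractions in $F$). The aim is to put the solution in the classical linear-fractional form
\[
F(s,t) = p_0(t) + \bigl(1-p_0(t)\bigr)\,\frac{(1-q(t))\,s}{1-q(t)s},
\]
which says that $Z(t)$ equals $0$ with probability $p_0(t)$ and, conditionally on $Z(t)\ge1$, is geometric on $\{1,2,\dots\}$ with parameter $1-q(t)$. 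The explicit solution (cf.\ \cite[Sec.~III.5]{athreya1972branching}) gives
\[
p_0(t)=\frac{d\,(e^{(b-d)t}-1)}{b\,e^{(b-d)t}-d}, \qquad q(t)=\frac{b\,(e^{(b-d)t}-1)}{b\,e^{(b-d)t}-d}.
\]
One should double-check that the coefficient matching in this step is consistent.

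From this representation the tail follows at once: $\P(Z(t)\ge j)=(1-p_0(t))\,q(t)^{j-1}$ for $j\ge1$. It then remains to simplify the two factors. First,
\[
1-q(t) = \frac{b-d}{b e^{(b-d)t}-d},
\]
which matches the bracket in the claim. Second, $1-p_0(t) = (b-d)e^{(b-d)t}/(be^{(b-d)t}-d)$; dividing numerator and denominator by $e^{(b-d)t}$ gives $(b-d)/(b-de^{-(b-d)t})$, which is the prefactor. Since $b\neq d$, all denominators are nonzero for $t\ge0$: when $b>d$ they are clearly positive, and when $b<d$ both numerator and denominator are negative.

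The only genuine work is integrating the Riccati ODE and recognising the linear-fractional structure. I would present this as a verification: take the claimed $F$ and check directly that it satisfies the backward equation and the initial condition, with uniqueness of the ODE solution completing the argument. This avoids the partial-fraction bookkeeping. Alternatively, one can cite the explicit formula from Athreya--Ney.
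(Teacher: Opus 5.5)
Your proposal is correct and is essentially the paper's route: the paper gives no proof of its own and refers to \cite[Sec.~III.5]{athreya1972branching} via \cite{HJR20}, which is exactly this argument (backward Riccati equation, linear-fractional generating function, geometric tail $\P(Z(t)\ge j)=(1-p_0(t))\,q(t)^{j-1}$). The coefficient matching you flag does check out: with $E=e^{(b-d)t}$ the solution is $F(s,t)=\frac{d(E-1)+s(b-dE)}{(bE-d)-b(E-1)s}$, and $(1-p_0)(1-q)-p_0q=\frac{b-dE}{bE-d}$, which is its $s$-coefficient after dividing through by $bE-d$.
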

\begin{proof}[Proof of Proposition~\ref{Zph1}] We abbreviate 
\begin{equation}\label{defdt}
  t_N:=  \tfrac {\sqrt{\log N}}{\varphi_N}, \quad j_N:= \left\lfloor \tfrac {\log N}{\varphi_N}\right \rfloor.
\end{equation} Since 
$t_N = o\left( \tfrac {\log N}{\varphi_N}\right) $ as $N \to \infty$, for proving the proposition it suffices to show that
\begin{equation}\label{cl1}
\mathbb P\Big(T_{j_N}^N \le t_N \,  \Big| \, Z_1^N \mbox{ survives}\Big) \to 1 \quad \mbox {as } N\to \infty.
\end{equation}
To see the validity of ~\eqref{cl1}, we   observe that its l.h.s.\ is bounded from below by
\begin{eqnarray} \notag 
&&\mathbb P_1\Big(Z_1^N(t_N) \ge j_N \,  \Big | \, Z_1^N \mbox{ survives}\Big)
\ge \frac{\sum_{j\ge j_N}\P_1\left(Z_1^N(t_N)=j \mbox{ and } Z_1^N\mbox { survives} \right) }{ \P_1(Z_1^N \mbox { survives})}
 \\ \notag &=&\frac{\sum_{j\ge j_N}\P_1\left(Z_1^N(t_N)=j\right) \P_j\left( Z_1^N\mbox { survives} \right) }{ \P_1(Z_1^N \mbox { survives})}\ge \frac{\sum_{j\ge j_N}\P_1\left(Z_1^N(t_N) =j \right) \P_{j_N}(Z_1^N \mbox { survives})}{ \P_1(Z_1^N \mbox { survives})}
 \\ \label{secondfac}
 &\ge& 
\frac{\P_1\left(Z_1^N(t_N) \ge j_N \right) }{ \P_1(Z_1^N(t_N) \ge 1)}(1-(1-\P_1(Z_1^N \mbox {survives}))^{j_N}),
\end{eqnarray}
where in the equality we used the Markov property at time $t_N$.
\\
Because of Lemma~\ref{lem:basic-props} (with $b:= 1+a\varphi_N$, $d:= 1$) we have
$$\frac{\P_1\left(Z_1^N(t_N) \ge j_N \right) }{ \P_1(Z_1^N(t_N) \ge 1)}\ge \left(1-\frac{a\varphi_N}{e^{a\varphi_Nt_N}}\right)^{\tfrac{\log N}{\varphi_N}}.$$
This converges to $1$ as $N\to \infty$, because with our choice of $t_N$ we have
$$\frac{\log N}{\varphi_N} \frac{a\varphi_N}{e^{a\varphi_Nt_N}_{}}=\frac {a \log N} {e^{a\sqrt{\log N}}_{}}\to 0.$$
It is well known that $\P_1(Z_1^N \mbox{ survives}) = \frac{b-d}b=\frac{a \varphi_N}{1+a \varphi_N}\sim a\varphi_N$. Consequently, since $j_N\gg \varphi_N^{-1}$, also the right-hand factor in~\eqref{secondfac} converges to 1 as $N\to \infty$.   This concludes the proof of~\eqref{cl1}.
\end{proof}
\begin{proof}[Proof of Proposition~\ref{Zph5}] We work with  $t_N$ and $j_N$ as in~\eqref{defdt}. Lemma~\ref{lem:basic-props}   (now with $b:=1$, $d:= 1+a\varphi_N$)  gives that
$$\mathbb P_1(Z_0^N(t_N) = 0) = 1-\frac{a\varphi_N}{(1+a\varphi_N)e^{a\varphi_Nt_N}-1}\ge 1-\frac{a\varphi_N}{e^{a\varphi_Nt_N}-1}. $$
Hence we conclude that
$$\mathbb P_{j_N}(\widetilde T_0^N \le t_N)=\mathbb P_{j_N}(Z_0^N(t_N) = 0) \ge  
\left(
1-\frac{a\varphi_N}{e^{a\varphi_Nt_N}-1}
\right)^
{j_N}. 
$$
The r.h.s.\ converges to $1$ as $N\to \infty$ by an argument analogous to those of the proof of Proposition~\ref{Zph1}. 
\end{proof}
\subsubsection{Proof of Propositions~\ref{Zph2} and~\ref{Zph4}}
We will prepare for the proof of these propositions by providing an extension of~\cite[Lemma A.1]{CMT21}, which includes the near-critical case $\varphi_N \to 0$ and allows
(in the supercritical case) also for an initial condition $Z^N(0) = \lfloor \tfrac{\log N}{\varphi_N} \rfloor$, as it appears in Proposition~\ref{Zph2}. This is covered by the case $\beta = b$
in Lemma~\ref{lem:CMT}\,a). Our proof of that lemma highlights two main ideas contained in the proof of~\cite[Lemma A.1]{CMT21}:

1) to apply Doob’s inequality to the martingale  $Z^N(t) / \E[Z^N(t)]$,

2) to use elementary properties of the $\log$-function.
\begin{lemma}\label{lem:CMT} 
Let $(Z^N(t))_{t\geq 0}$ be a binary branching process with individual birth rate $\lambda_N \geq 0$ and individual death rate $\mu_N \geq 0$. Further, let $\varphi_N$ and $b\in[0,1)$ as in Definition~\ref{def:moderate-selection} and denote $b_N=-\log_N\varphi_N$. Lastly, let $\beta_N\geq b_N$ for all $N$, assume that $\beta:=\lim_{N\to\infty}\beta_N$ exists and let the initial conditions satisfy $Z^N(0) = \lfloor N^{\beta_N} \rfloor$.
\begin{enumerate}[a)]
    \item \emph{(Slightly supercritical case.)}   
    Assume that $\lambda_N = 1+a \varphi_N$ for some $a>0$ and some sequence $(\varphi_N)_{N\in\N}$ of positive numbers, and $\mu_N=1$.
   
    In case $\beta=b$ assume additionally that $N^{\beta_N}\gg \varphi_N^{-1}$. Then, for any $T,\delta>0$
    \[
      \lim_{N\to\infty} \P \Big(  \sup_{t \leq T} \Big| \log_N(Z^N(t \varphi_N^{-1} \log N) ) - (\beta+at) \Big| >\delta \Big)
        = 0,
    \]
    i.e.\ $(\log_N(Z^N(t \varphi_N^{-1} \log N) ))_{t\in[0,T]}$ converges uniformly in probability to $(\beta+at)_{t\in[0,T]}$.
    \item \emph{(Slightly subcritical case.)}  Assume that $\mu_N = 1+a \varphi_N$ for some $a>0$ and for some sequence $(\varphi_N)_{N\in\N}$ of positive numbers, and $\lambda_N=1$.

    Let $\beta>b$ and $(\eps_N)_{N\in\N}$ be a non-negative sequence tending to 0 such that $N^{-\varepsilon_N}\to0$ as $N\to\infty$.
    Then, for  $T_N := \frac{\beta_N-b_N-\eps_N}{a}$ and for any $\delta>0$,
    \[ \lim_{N\to\infty}  \P \Big(  \sup_{t \leq T_N} \Big| \log_N(Z^N(t \varphi_N^{-1} \log N) ) - (\beta-at) \Big| >\delta \Big)=0. \]
    In particular, for any $s<\frac{\beta-b}{a}$, $(\log_N(Z^N(t\varphi_N^{-1}\log N)))_{t\in [0,s]}$ converges uniformly in probability to $(\beta-at)_{t\in [0,s]}$ as $N\to\infty$.
\end{enumerate}
\end{lemma}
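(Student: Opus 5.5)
We work with the martingale $M^N(t):=Z^N(t)e^{-r_Nt}/Z^N(0)$, where $r_N:=\lambda_N-\mu_N=\pm a\varphi_N$. By the standard second-moment formula for binary branching processes,
\[
\operatorname{Var}(Z^N(t)\mid Z^N(0)=z)=z\,\frac{\lambda_N+\mu_N}{r_N}\,e^{r_Nt}\bigl(e^{r_Nt}-1\bigr).
\]
Hence
\[
\E\bigl[(M^N(t)-1)^2\bigr]=\frac{\lambda_N+\mu_N}{Z^N(0)\,r_N}\bigl(1-e^{-r_Nt}\bigr).
\]
\emph{Supercritical case.} With $t=T\varphi_N^{-1}\log N$ the bracket is bounded by $1$, so this quantity is at most $C/(Z^N(0)\varphi_N)$. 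This tends to $0$: if $\beta>b$, because $N^{\beta_N}\varphi_N=N^{\beta_N-b_N}\to\infty$; if $\beta=b$, by the extra assumption $N^{\beta_N}\gg\varphi_N^{-1}$. Doob's $L^2$ inequality then gives
\[
\sup_{s\le T\varphi_N^{-1}\log N}|M^N(s)-1|\to0
\]
in probability. So w.h.p. $Z^N(s)\in[\tfrac12,2]\,Z^N(0)e^{a\varphi_Ns}$ for all such $s$. Taking $\log_N$ gives
\[
\log_NZ^N(t\varphi_N^{-1}\log N)=\log_N\lfloor N^{\beta_N}\rfloor+at+O(1/\log N)
\]
uniformly in $t\le T$. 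Since $\beta_N\to\beta$, this converges to $\beta+at$ uniformly, proving a).

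\emph{Subcritical case.} Here $r_N=-a\varphi_N$ and $e^{-r_Nt}=e^{a\varphi_Nt}$ blows up. The bound becomes
\[
\E\bigl[(M^N(t)-1)^2\bigr]\le\frac{C\,e^{a\varphi_Nt}}{Z^N(0)\,\varphi_N}.
\]
At $t=T_N\varphi_N^{-1}\log N$ we have $e^{a\varphi_Nt}=N^{\beta_N-b_N-\eps_N}$, so the bound is of order
\[
\frac{N^{\beta_N-b_N-\eps_N}}{N^{\beta_N}\,N^{-b_N}}=N^{-\eps_N}\to0.
\]
This is exactly why the horizon is cut at $T_N$ and why we need $N^{-\eps_N}\to0$. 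Doob's inequality, applied to the martingale $M^N$ up to this deterministic time, again gives uniform closeness of $M^N$ to $1$ in probability. Hence w.h.p., uniformly for $t\le T_N$,
\[
Z^N(t\varphi_N^{-1}\log N)\in[\tfrac12,2]\,\lfloor N^{\beta_N}\rfloor N^{-at}.
\]
In particular $Z^N$ stays positive there, and its logarithm is $\beta_N-at+O(1/\log N)$. Combined with $\beta_N\to\beta$, this gives uniform convergence to $\beta-at$ on $[0,T_N]$. The "in particular" claim follows because for fixed $s<(\beta-b)/a$ we have $s\le T_N$ eventually, e.g. taking $\eps_N=0$ when $N^{0}$ does not suffice: one instead takes $\eps_N:=1/\sqrt{\log N}$, for which $N^{-\eps_N}\to0$.

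\emph{Main obstacle.} The only delicate point is the subcritical variance bound near the end of the horizon. There the relative fluctuations become of order one as the expected population approaches $\varphi_N^{-1}$. We must check that the $\log_N$ error stays $o(1)$ right up to $T_N$, which uses that the second moment is $O(N^{-\eps_N})$ rather than merely bounded. Everything else is routine: the variance formula, Doob's inequality, and $|\log_N x-\log_N y|\le \log 2/\log N$ for $x/y\in[\tfrac12,2]$.
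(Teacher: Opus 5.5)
Your proposal is correct and follows essentially the same route as the paper: Doob's $L^2$ inequality for the martingale $e^{-r_Nt}Z^N(t)$ combined with the explicit variance formula. This gives a bound of order $(Z^N(0)\varphi_N)^{-1}$ in case a) and of order $N^{-\eps_N}$ at the horizon $T_N$ in case b), after which the elementary logarithm estimate finishes the argument. The differences are only cosmetic: you normalize by $Z^N(0)$ and use the fixed relative threshold $\tfrac12$, whereas the paper uses the $N$-dependent thresholds $\tfrac12 N^{(2\beta_N+b_N)/3}$ and $\tfrac12 N^{\beta_N-\eps_N/3}$.
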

\normalcolor
\begin{proof}
  In both cases a) and b), put $r_N = \lambda_N-\mu_N$. Consider the martingale $(\widehat M_t^N)=(e^{-r_Nt}Z^N(t)-Z^N(0))_t$ and note that by \cite[Chapter V, p.103, Theorem~6.1]{Harris63},
  \begin{align}\label{classvar}
     \E[(\widehat M_t^N)^2]
     &= \mathrm{Var}[\widehat M_t^N] + 0
      = e^{-2r_Nt}\mathrm{Var}[Z^N(t)]
      = e^{-2r_Nt}Z^N(0)\cdot\frac{\lambda_N+\mu_N}{r_N}(e^{2r_Nt}-e^{r_Nt}).
  \end{align}
  Hence, by Doob's inequality, for any $\eta=\eta_N>0$
  \begin{align*}
    \P \Big( & \sup_{t \leq T \varphi_N^{-1} \log N} | e^{-rt} Z^N(t) - Z^N(0)| \geq \tfrac12N^\eta \Big)
       = \P \Big( \sup_{t \leq T \varphi_N^{-1} \log N} |\widehat M_t^N | \geq  \tfrac12N^\eta \Big) \\
      &\leq 4\cdot\tfrac14N^{-2\eta} \E[(\widehat M^N_{T \varphi_N^{-1} \log N})^2]
       =  Z^N(0)N^{-2\eta} \frac{\lambda_N+\mu_N}{r_N} (1-e^{-Tr_N\varphi_N^{-1}\log N}).
       \numberthis\label{eq:lastestimate}
  \end{align*}
  Now, in case a), this equates to
  \[
    N^{\beta_N+b_N-2\eta}(2+a\varphi_N)(1-N^{-aT})
      \in O(N^{\beta_N+b_N-2\eta}).
  \]
  Combined with the assumption that $N^{\beta_N}\gg N^{b_N}$, this implies that
  the probability of the event
  \[
    \Omega_1^N
     = \Big\{ \sup_{t \leq T \varphi_N^{-1} \log N} \big| e^{-r_Nt} Z^N(t) - N^{\beta_N}|
               \leq \tfrac12N^{\frac{2\beta_N+b_N}{3}} \Big\}
  \]
  tends to 1 as $N\to\infty$. On this event, division by $N^{\beta_N}$ leads to
  \[
    \sup_{t\leq T}\Big|\frac{Z^N(t\varphi_N^{-1}\log N)}{N^{\beta_N}e^{-r_Nt\varphi_N^{-1}\log N}}-1\Big|
      \leq \tfrac12.
  \]
  Since $|\log x|\leq 2|x-1|$ for all $x\geq\frac12$ it follows
  \begin{align*}
      1
       &\geq \sup_{t\leq T}\Big|\log(Z^N(t\varphi_N^{-1}\log N))-\log(N^{\beta_N}e^{-r_Nt\varphi_N^{-1}\log N})\Big|
        \\[.5em]
       &= \sup_{t\leq T}\Big|\log(Z^N(t\varphi_N^{-1}\log N))-(\beta_N+at)\log N\Big|
  \end{align*}
  and thus
  \begin{align*}
      \sup_{t\leq T}\Big|\log_N(Z^N(t\varphi_N^{-1}\log N))-(\beta+at)\Big|
        &\leq \frac{1}{\log N} + |\beta-\beta_N|
         \xrightarrow{N\to\infty} 0.
  \end{align*}
  %---------------------------------------------
  In case b), the last term in \eqref{eq:lastestimate} becomes
  \[
    N^{\beta_N+b_N-2\eta}(2+a\varphi_N)(N^{aT_N}-1)
      \in O(N^{\beta_N+b_N-2\eta+aT_N}).
  \]
  We assume throughout the remainder of the proof that $N$ is sufficiently large such that $T_N>0$. Now, the probability of the event
  \[ 
    \Omega_2^N
      = \Big\{ \sup_{t \leq T_N \varphi_N^{-1} \log N} \big| e^{-r_Nt} Z^N(t) - N^{\beta_N} \big|
                \leq \tfrac12N^{\beta_N-\frac{\eps_N}{3}}  \Big\},
  \]
  tends to 1 as $N\to\infty$ since we assumed $N^{-\varepsilon_N}\to0$ and
  \[
    \beta_N + b_N - 2(\beta_N-\frac{\eps_N}{3}) + a T_N
 %     = \beta_N - 2\beta_N + \frac{2\eps_N}{3} + b_N + \beta_N-b_N-\eps_N
      = -\frac{\eps_N}{3}.
  \]
  Analogously to a) it follows on $\Omega_2^N$ that
  \begin{align*}
      \sup_{t\leq T_N}\Big|\log_N(Z^N(t\varphi_N^{-1}\log N))-(\beta+at)\Big|
       &\leq \frac1{\log N}\sup_{t\leq T_N}\Big|\log(Z^N(t\varphi_N^{-1}\log N))-\log N^{\beta_N+at}\Big|
               + |\beta-\beta_N| \\[.5em]
       &\leq \frac1{\log N} + |\beta-\beta_N|
        \xrightarrow{N\to\infty} 0.
  \end{align*}
\end{proof}
With Lemma~\ref{lem:CMT}, the proofs of Propositions~\ref{Zph2} and~\ref{Zph4} are straightforward.  In the next subsection we complete the proofs of the assertions that were stated in Section~\ref{rescrb}.
\subsubsection{Proof of Proposition~\ref{Zph3}}
    With $r_N := a\varphi_N$,  $t_N:= \tfrac{(\log N)^{1/3}}{r_N}$ and $Z_1^N(0):=  \big\lfloor \frac N{\sqrt{\log N}} \big\rfloor $ it suffices to prove that 
\begin{equation}\label{Z1large}
    \P(Z_1^N(t_N) \ge N) \to 1 \quad \mbox{ as } N\to \infty.
\end{equation}
To this end we observe first that
\begin{equation}\label{expest0}
\E[Z_1^N(t_N)]= \frac N{\sqrt{\log N}} e^{r_Nt_N} = \frac N{\sqrt{\log N}} e^{(\log N)^{1/3}} \gg N.
\end{equation}
and hence
\begin{equation}\label{expest}
\E[Z_1^N(t_N)]-N\ge \frac N{2\sqrt{\log N}} e^{r_Nt_N} \quad \mbox{ for } N \mbox{ large enough}.
\end{equation}
From the variance formula~\eqref{classvar} we conclude that, for $N$ large enough,
\begin{equation}\label{varest}
\sigma_N^2:= {\rm Var}[Z_1^N(t_N)] = \frac N{\sqrt{\log N}}\frac{2+r_N}{r_N} \left(e^{2r_Nt_N}-e^{r_Nt_N}\right)\le \frac N{\sqrt{\log N}}\frac{3}{a\varphi_N}e^{2r_Nt_N}.   
\end{equation}
Making use of~\eqref{lowerboundphi} we conclude from~\eqref{expest} and \eqref{varest} that
\begin{equation}\label{kNlarge}
  k_N := \frac{\E[Z_1^N(t_N)]-N}{\sigma_N}\to \infty \quad{\text{as }} N\to \infty.  
\end{equation}
By construction,
\begin{equation}\label{firstbound}
    \P(Z_1^N(t_N) < N)
      \le \P\big( \big|Z_1^N(t_N) - \E[Z_1^N(t_N)]\big| \ge k_N\sigma_N\big).
\end{equation}
Finally, by Chebyshev's inequality, the r.h.s.\ of~\eqref{firstbound} is bounded above by~$\tfrac 1{k_N^2}$.
This concludes the proof of~\eqref{Z1large}, and hence that of Proposition~\ref{Zph3}. $\Box$
\subsection{Completion of the proof of Theorem~\ref{thm:house} part  I}\label{pfpart1old} Let the Galton--Watson processes $Z^N_0$ and~$Z_1^N$ be as in Section~\ref{rescrb}. Consider first the situation of Proposition~\ref{Zph2},  assuming that $Z_1^N(0)=\big\lfloor \tfrac {\log N}{\varphi_N} \big\rfloor$. We claim that \begin{equation}\label{duration1}
    D_N:= \frac{\varphi_N}{\log N}T_{\big\lfloor N\big(1-\tfrac 1{\sqrt{\log N}}\big)\big\rfloor}^N \to \frac{1-b}a\quad \mbox{ in probability} \quad \mbox{as } N\to \infty.
\end{equation} 
To see this, observe for $t_0 \in \left(0, \tfrac {1-b}a\right)$ and $t_1 > \tfrac {1-b}a$ the relations
$$\P(D_N \le t_0) = \P\left(\sup_{u\le t_0}Z_1^N\left(\tfrac{u\log N}{\varphi_N}\right)\ge N\left(1-\tfrac 1{\sqrt{\log N}}\right)\right)$$
and
$$\P(D_N > t_1) \le \P\left(Z_1^N\left(\tfrac{t_1\log N}{\varphi_N}\right)\le N\left(1-\tfrac 1{\sqrt{\log N}}\right)\right).$$
Because of Proposition~\ref{Zph2} both of the right-hand sides converge to $0$ as $N\to \infty$, which shows~\eqref{duration1}.

 Combining this with Proposition~\ref{Zph1}  we see that \eqref{duration1} remains valid if we assume that $Z_1^N(0) =1$ and condition on the event of fixation $\{\mathcal T_1^N < \infty\}$. Again invoking Proposition~\ref{Zph2} we see that then
for all $\eps > 0$
\begin{equation}\label{Zphase123}
 \Big(\log_N^+\Big(Z_1^N\Big(\big(t\wedge T^N_{\big\lfloor N\big(1-\tfrac 1{\sqrt{\log N}}\big)\big\rfloor}\big)\,\varphi_N^{-1}\log N\Big)\Big)\Big)_{t\ge \eps} \to \big(h_1(t)\big)_{t\ge \eps}^{}  
\end{equation}
uniformly in probability as $N\to \infty$. 

2. Now assume that $Z^N_0$ starts in $\big\lfloor N\big(1-\tfrac 1{\sqrt{\log N}}\big)\big\rfloor$ and is independent of $Z_1^N$. We define
\begin{equation}\label{Z0tilde}
   \widetilde Z^N_0(t)
    := 
     \begin{cases}N-Z_1^N(t)
       \quad &\mbox{for } t < T^N_{\big\lfloor N\big(1-\tfrac 1{\sqrt{\log N}}\big)\big\rfloor}\\
       Z^N_0\Big(t- T^N_{\big\lfloor N\big(1-\tfrac 1{\sqrt{\log N}}\big)\big\rfloor}\Big)  \quad &\mbox{for } t \ge T^N_{\big\lfloor N\big(1-\tfrac 1{\sqrt{\log N}}\big)\big\rfloor}.
   \end{cases}
\end{equation}
Combining~\eqref{Zphase123} with Propositions~\ref{Zph4} and~\ref{Zph5} we infer that for all $\eps >0$ 
\begin{equation}\label{Zphase45}
 \big(\log_N^+\big(\widetilde Z^N(t\,\varphi_N^{-1}\log N)\big)\big)_{t\ge 0} \to h_0   
\end{equation}
uniformly in probability on the set $D_0^\eps$ defined in~\eqref{defsetsD}.

3. As a companion to~\eqref{Z0tilde} we define
\begin{equation}\label{Z1tilde}
   \widetilde Z_1^N(t):= \begin{cases}Z_1^N(t) \quad &\mbox{for } t < T^N_{\big\lfloor N\big(1-\tfrac 1{\sqrt{\log N}}\big)\big\rfloor}\\
   \widetilde Z^N_0(t)  \quad &\mbox{for } t \ge T^N_{\big\lfloor N\big(1-\tfrac 1{\sqrt{\log N}}\big)\big\rfloor}.
   \end{cases}
\end{equation}

4.
Following the program announced in Remark~\ref{remarkprel}~a) and further explained in Section~\ref{pfstrat}, 
we define the time change factor
\begin{equation}\label{deftcf}
    f^N(t):= \begin{cases}
        1-\tfrac{Z_1^N(t)}N \quad \mbox{for}\quad  t\le T^N_{\big\lfloor N\big(1-\tfrac 1{\sqrt{\log N}}\big)\big\rfloor}\\
         1-\tfrac{\widetilde Z^N_0(t)}N \quad \mbox{for}\quad  t\ge T^N_{\big\lfloor N\big(1-\tfrac 1{\sqrt{\log N}}\big)\big\rfloor}
    \end{cases}
\end{equation}
and the time change
$$\sigma_t^N:= \int_0^t f^N(u) \, {\rm d}u,\quad  t\ge 0.$$
It follows from the definition of the jump rates  (or by a generator calculation) that
$$
  \Big(\widetilde Z^N_0(\sigma_t^N), \widetilde Z_1^N(\sigma_t^N)\Big)_{t\ge 0} 
    \overset{(d)}{=} \big(X_0^N(t), X_1^N(t)\big)_{t\ge 0}.
$$
By construction we have 
\begin{equation}\label{tc0}
   1\ge f^N(t) \ge 1- \tfrac 1{\log N} \quad \mbox{for}\quad  0\le t \le T^N_{\left\lfloor \tfrac {N}{\log N} \right\rfloor} 
\end{equation}
and w.h.p.\ as $N \to \infty$
\begin{equation}\label{tc1}
    1 \ge f^N(t) \ge 1- \tfrac 1{(\log N)^{1/3}} \quad \mbox{for}\quad  t \ge  T^N_{\big\lfloor N\big(1-\tfrac 1{\sqrt{\log N}}\big)\big\rfloor}.
\end{equation}
(To see the latter, we observe that~\eqref{tc1} is valid on the event 
$$
  \mathscr E_N
    := \Big\{\widetilde Z^N_0 \mbox{ hits } 0 \mbox{ before  }
         \tfrac N{(\log N)^{1/3}}\Big\}
$$
and that $\P\big(\mathscr E_N \big| \widetilde Z^N(0) = \tfrac 
N{(\log N)^{1/2}}  \big. \big) \to 1$ as $N\to \infty$, where the latter property follows by observing $\tfrac 
N{(\log N)^{1/2}} \ll \tfrac 
N{(\log N)^{1/3}} $ and applying optional stopping to the supermartingale $\widetilde Z^N_0$, or simply comparing $\widetilde Z^N_0$ to a driftless random walk to which the classical ruin problem can be applied.)

Also by construction we have
\begin{equation}\label{tc2}
 1 \ge f^N(t) \ge \tfrac 1{\sqrt{\log N}} \quad \mbox{for}\quad  T^N_{\big\lfloor \tfrac {N}{\log N} \big\rfloor} \le t \le T^N_{\big\lfloor N\big(1-\tfrac 1{\sqrt{\log N}}\big)\big\rfloor}.    
\end{equation}
Together with Proposition~\ref{Zph3} this shows that $T^N_{\big\lfloor N\big(1-\tfrac 1{\sqrt{\log N}}\big)\big\rfloor}-T^N_{\big\lfloor \tfrac {N}{\log N} \big\rfloor} $ is of smaller order than $\frac{\log N}{\varphi_N}$ for the time-changed branching process, and hence also for $X_1^N$. Combining this with~\eqref{tc0},~\eqref{tc1} and the above steps 1.\ and 2.,  we see that the proof of  Theorem~\ref{thm:house}, part I.\ is complete.\qed

\section{Proof of Theorem~\ref{thm:house} part II}\label{secpfM1top}

\subsection{A metric for $M_1$ convergence}\label{metricM1}
In this section we briefly subsume, in adequate generality, the basics required for the framework and the proof of Theorem~\ref{thm:house} part II. 
With $I$ being a fixed interval in $\R$, let
\begin{equation}\label{defD}
   \mathcal D_I:=\{f:I\to \R \mid f \mbox{ is right continuous and has left limits}\} 
\end{equation} 
\begin{definition}\label{defextgraph} Let $f\in \mathcal D_I$.
  \begin{enumerate}[a)]
    \item 
      The  {\em extended graph} of $f$ is defined as
      $$
        \mathbb G(f)
         := \bigcup_{t\in I}\big(\{t\} \times [\min(f(t-),f(t)), \max(f(t-),f(t))]\big).
      $$
      Thus, in  words from~\cite{K23}, the extended graph  ``contains the graph together with the straight lines connecting the two ends of a jump discontinuity''.
    \item 
      A~{\em parametric representation} of (the extended graph of) $f$ is a mapping $\phi$ which maps $[0,1]$ onto $\mathbb G(f)$ and for which $u \mapsto \phi(u)$ is increasing with respect to the total order $\preceq$ on $\mathbb G(f)$ defined by
      $$
        (t_1,x_1)\preceq (t_2,x_2) :\Longleftrightarrow t_1\le t_2
          \mbox{ and } \left(t_1=t_2 \Rightarrow |x_1 - f(t_1-)| \le |x_2 - f(t_1-)|\right).
      $$
  \end{enumerate}
\end{definition}
\begin{remark}\label{existpar}
  \begin{enumerate}[a)]
    \item
      For the notion of $M_1$ convergence (and for generating the $M_1$ topology) the following metric on $\mathcal D(I)$ was proposed by  \cite{skorokhod1956limit}:
      \begin{equation}\label{defdM1}
        d_{M_1}(f,g):= \inf\{\sup\limits_{u\in [0,1]}| \phi(u)-\gamma(u)| \}, 
      \end{equation} 
      where the infimum in~\eqref{defdM1} is taken over all parametric representations $\phi $ and $\gamma$ of $f $ and $g$, respectively.
    \item 
      For a piecewise constant function $f$ with finitely many jumps one obtains a parametric representation (e.g.) by traversing the (horizontal and vertical segments of the) extended graph at constant speed. For a general $f\in \mathcal D_I$, the  existence of a parametric representation follows from an argument given in~\cite[Remark 12.3.3]{Whitt}. 
  \end{enumerate}
\end{remark}
\subsection{The accompanying Galton--Watson processes and their discrete-time embeddings}
\label{GWpart2}
As in Section~\ref{rescrb} we consider  Galton--Watson processes $Z_1^N$ and $Z_0^N$ with jump rates given in~\eqref{Zrates}.  Recalling the parameters $a$ and $b$ that enter via~\eqref{Zrates} and~\eqref{propphiN}, we abbreviate
$$t^{(a,b)}:= \tfrac {1-b}a,$$
and fix a number $\eps >0$. With
$Z_1^N(0) :=1$ and $Z_1^N$ conditioned under survival, define
 \begin{equation}\label{defY1}
    \left.
        \begin{array}{c}
                Y_1^N(t):= \log_N^+\big(Z^N_{1} (t \varphi_N^{-1} \log N)\big)\mathbf 1_{\{t\ge 0\}} \\[.5em]
                y_1(t):= (b+at) \mathbf 1_{\{t\ge 0\}}\phantom{AAAAAAAA} 
    	\end{array}
    \right\}\, ,\quad t \in [-\eps, t^{(a,b)}],
\end{equation}
and with $Z_0^N(0) :=\lfloor \tfrac N{\sqrt{\log N}} \rfloor$ define
\begin{equation}\label{defY0}
    \left.
        \begin{array}{c}
                Y_0^N(t):= \log_N^+\big(Z^N_{0} (t \varphi_N^{-1} \log N)\big) \\[.5em]
                y_0(t):= (1-at) \mathbf 1_{\{t\le t^{(a,b)}\}}\phantom{AAA} 
    	\end{array}
    \right\}\,, \quad t \in [0, t^{(a,b)}+\eps].
\end{equation}
Note that the starting value $Z_0^N(0)$ is chosen equal to the one in Proposition~\ref{Zph4}, and that the functions $y_1$ and $y_0$ are closely related to $h_1$ and $h_0$ defined in Section~\ref{mainresults}: $y_1$ is simply a restriction of $h_1$, while $y_0$ arises by first shifting and then restricting the function $h_0$.
 
The  next proposition, which will be key for the proof of Theorem~\ref{thm:house} part II, gives refinements  of Propositions~\ref{Zph1} and~\ref{Zph5}.
\begin{prop} \label{propeastwest} For $i \in \{0,1\}$,
$$d_{M_1}(Y_i^N, y_i) \to 0 \quad \mbox{in probability as } N\to \infty.$$   
\end{prop}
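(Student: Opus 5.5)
We reduce $M_1$ convergence to two ingredients: uniform convergence away from the jump, and near-monotone traversal of the jump within a short time window. We treat $i=1$ in detail; the case $i=0$ is the time-reversed analogue, with Proposition~\ref{Zph4} and Proposition~\ref{Zph5} replacing Propositions~\ref{Zph2} and~\ref{Zph1}.

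\emph{Step 1: a deterministic criterion.} Let $y_1$ jump from $0$ to $b$ at time $0$, and let $f\in\mathcal D$. Suppose that for some small $\delta>0$ and some $\tau\le\delta$:
\begin{itemize}
\item $f=0$ on $[-\eps,0)$;
\item $\sup_{t\in[\tau,t^{(a,b)}]}|f(t)-y_1(t)|\le\delta$;
\item on $[0,\tau]$, $f$ takes values in $[0,b+\delta]$ and is $\delta$-nearly monotone, i.e. $f(s)\le f(t)+\delta$ for all $0\le s\le t\le\tau$.
\end{itemize}
Then $d_{M_1}(f,y_1)\le C\delta$. To see this, build parametric representations by hand. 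On $[\tau,t^{(a,b)}]$ match times identically. Map the portion of $\mathbb G(f)$ over $[0,\tau]$ onto the vertical segment $\{0\}\times[0,b]$ of $\mathbb G(y_1)$, sending each point $(t,x)$ to $(0,\min(\max_{s\le t} f(s),b))$. This running maximum is nondecreasing, so the map is admissible. By near monotonicity, the running maximum differs from $x$ by at most $\delta$, and the horizontal displacement is at most $\tau$. This step is routine, using \eqref{defdM1}.

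\emph{Step 2: choose the window.} Let $\tau_N:=\frac{\varphi_N}{\log N}T^N_{j_N}$ with $j_N=\lfloor \log N/\varphi_N\rfloor$. By Proposition~\ref{Zph1}, $\tau_N\to0$ in probability. Since $\log_N j_N\to b$, we have $Y_1^N(\tau_N)\to b$. The strong Markov property at $T^N_{j_N}$, together with Proposition~\ref{Zph2}, gives uniform convergence of $Y_1^N$ to $y_1$ on $[\tau_N,t^{(a,b)}]$, so the second condition holds w.h.p. For the upper bound in the third condition, note that $\max_{s\le T^N_{j_N}}Z_1^N\le j_N$, hence $Y_1^N\le b+o(1)$ on $[0,\tau_N]$.

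\emph{Step 3: near monotonicity (main obstacle).} We must show that, conditioned on survival, w.h.p. there are no $s<t\le T^N_{j_N}$ with $\log_N Z_1^N(s)-\log_N Z_1^N(t)>\delta$. In other words, after first reaching a level $k$, the process never returns below $kN^{-\delta}$ before hitting $j_N$. We pass to the discrete-time embedded jump chain, which is a random walk with up-probability $\frac{1+a\varphi_N}{2+a\varphi_N}$, conditioned to survive. It suffices to consider levels $k=N^{m\delta/2}$, $m=0,1,\dots$, a number of levels bounded in $N$. By the gambler's ruin formula, the probability that the walk started at $k$ drops to $kN^{-\delta/2}$ before reaching $j_N$ is at most of order
$$\frac{\log j_N-\log k}{\log j_N-\log k+\tfrac\delta2\log N}\wedge\ldots$$
This bound is too crude, since for the driftless part the ratio of log-gaps does not vanish. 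The correct estimate is linear: from $k$, a driftless walk hits $kN^{-\delta/2}$ before $j_N$ with probability about $\frac{j_N-k}{j_N-kN^{-\delta/2}}$, which is not small either.

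So near monotonicity must instead come from the relative drop. The walk must fall to a factor $N^{-\delta/2}$ of a level it has already reached. Since $k\mapsto$ (walk) is a martingale up to the drift, Doob's maximal inequality for the nonnegative martingale $Z_1^N$, or its conditioned version, bounds the probability of falling from $k$ to $kN^{-\delta/2}$ and then rising back to $j_N$, namely the probability of rising from $kN^{-\delta/2}$ to $j_N$, by roughly $N^{-\delta/2}\frac{k}{j_N}\le N^{-\delta/2}$. This rise is required on the survival event, since the process does reach $j_N$. Dividing by the survival probability conditioning, which is harmless on the event of reaching $j_N$ by the estimate \eqref{secondfac}, and summing over the boundedly many levels, we get an $o(1)$ bound. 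This is the step we expect to need the most care, particularly in handling the conditioning on survival and the lattice of levels.

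Combining Steps 1--3, we conclude $d_{M_1}(Y_1^N,y_1)\to0$ in probability. For $i=0$, Proposition~\ref{Zph4} gives uniform convergence up to the hitting time of $\lfloor\log N/\varphi_N\rfloor$. After that time, Proposition~\ref{Zph5} shows that extinction occurs within $o(1)$ rescaled time. Near monotonicity of the descent follows from the same optional stopping argument, now applied to the supermartingale $Z_0^N$: the probability of climbing from $k$ back to $kN^{\delta}$ is at most $N^{-\delta}$.
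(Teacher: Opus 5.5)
Steps 1 and 2 follow the paper's scheme: near-monotonicity, i.e.\ closeness to the running maximum, on $[0,\tau_N]$, then Propositions~\ref{Zph1} and \ref{Zph2} with the strong Markov property after $T^N_{j_N}$. In Step 1 you must also let the parametrization of $y_1$ run along $\{(t,b+at):0\le t\le\tau\}$ while that of $f$ pauses at $(\tau,f(\tau))$; this costs $O(\tau+\delta)$.

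The real problem is Step 3, which carries the proof: the bound you state there is not correctly derived.

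First, $Z_1^N$ is a submartingale, not a martingale; its drift is what carries it from $1$ to $1/\varphi_N$. Put $\ell:=\lfloor kN^{-\delta/2}\rfloor$ and $r=(1+a\varphi_N)^{-1}$. From $\ell$, the walk reaches $j_N$ before $0$ with probability $(1-r^\ell)/(1-r^{j_N})\asymp\min(1,\varphi_N\ell)$. This exceeds your $\ell/j_N$ by a factor up to order $\log N$.

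Second, and more seriously, the conditioning is not harmless. After the strong Markov property at the first visit to $k$, the quantity to bound is $\P^{p,+}_k(\tau_\ell<\infty)$. It is normalized by $\mathfrak h_p(k)=\P_k(\text{survival})\asymp\min(1,\varphi_N k)$, which is tiny for $k\ll1/\varphi_N$. Estimate \eqref{secondfac} only says that survival from $j_N$ is likely.

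Your final inequality $N^{-\delta/2}k/j_N\le N^{-\delta/2}$ therefore silently drops the factor $\mathfrak h_p(j_N)/\mathfrak h_p(k)=\P_1(\tau_k<\tau_0)/\P_1(\tau_{j_N}<\tau_0)$. This factor is of order $1/(\varphi_N k)$, i.e.\ up to $N^{b-\delta/2+o(1)}$. The estimate only closes because the survival probability from $\ell$ is small in exactly the same proportion, and showing that requires the correct formula, not $\ell/j_N$.

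The repair stays within your approach. By the strong Markov property and \eqref{rayknight}, $\P_k^{p,+}(\tau_\ell<\infty)=r^{k-\ell}\,\mathfrak h_p(\ell)/\mathfrak h_p(k)\le \mathfrak h_p(\ell)/\mathfrak h_p(k)$. Using $1-r^\ell\le \ell(1-r)$ and $1-r^k\ge\tfrac12\min(1,k(1-r))$, for $k\le j_N$,
\[
\frac{\mathfrak h_p(\ell)}{\mathfrak h_p(k)}\le 2\max\Big(\frac{\ell}{k},\,a\varphi_N\ell\Big)\le 2\max(1,a\log N)\,N^{-\delta/2}.
\]
A union bound over the $O(1/\delta)$ levels $k=\lfloor N^{m\delta/2}\rfloor$ then gives near-monotonicity w.h.p. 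Your $i=0$ argument, Doob's maximal inequality for the nonnegative supermartingale $Z_0^N$, is correct as it stands.

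With this repair you have a genuinely different and more elementary proof of the key fluctuation bound. The paper proves Lemma~\ref{logfluc1} by comparing with the Bessel-like walk $W^*_{1/2}$ (Lemma~\ref{driftcompar}) and using a law of the iterated logarithm (Proposition~\ref{HKKcor}). That controls the log-drop between consecutive first passages $\tau_k,\tau_{k+1}$ uniformly in $k\le N$. For the descent, the paper then needs the time-reversal Lemma~\ref{reversallemma} plus a coupling to obtain Lemma~\ref{logfluc0}. Your level-by-level hitting estimates give a tolerance $\delta$ on finitely many geometric levels, which is all the $M_1$ statement needs. For $i=0$ they dispense with time reversal entirely.
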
 
Before turning to the proof, which will be  given in Section~\ref{pfsecGW1}, we briefly explain its strategy.
We know already from Proposition~\ref{Zph1} that the time needed for $Y^N_1$ to climb from height $0$ to height $b$ is asymptotically negligible as $N\to \infty$, so what matters now is the discrete-time embedding of $Y^N_1$. To get a handle on this we recall that the discrete-time embedding of $Z_i^N$ is a random walk with steps $\pm 1$ and probability $p = p(i,N)$ for making an upward step being 
\begin{eqnarray}
   p(i,N)= \begin{cases}
       \frac{1+a\varphi_N}{2+a\varphi_N} > \frac 12 \mbox{ for } i=1\\
       \frac1{2+a\varphi_N} < \frac 12 \mbox{ for } i=0.
   \end{cases} 
\end{eqnarray}
\begin{definition}\label{discreteW}
  Let  
  $$\mathbb W:=\{w=(w(0),w(1), \ldots)\mid w(n) \in \mathbb Z, w(n+1)-w(n) \in \{-1,1\}, n \in \N_0\},$$
  and for $p\in (0,1)$ let $(W, (\P_k^p)_{k\in \Z})$ be the canonical model of a (discrete-time) random walk with step size~1 and upward transition probability  $p$. (In particular, $\P_k^p(W(0)=k)=1$ , $\P_k^p(W(1)=k+1)=p$.)

  For $k \in \N_0$ and $w \in \mathbb W$ let
\begin{equation}\label{deftauk}
    \tau_k(w):= \min\{n\in \N_0\mid w(n) =k\}, \quad \tau_k^+(w):= \min\{n\in \N\mid w(n) =k\},
\end{equation}
be the first visit time of $k$ and the first return time to $k$, respectively, and let
\begin{equation}\label{defsigma}
    \sigma_k(w):= \max\{n\in \N_0\mid w(n) =k, \, w(j) \in \N \mbox{ for } j \in \{0, \ldots, n-1\}\}
\end{equation}
be the time of the last visit of $k$ before $w$ hits $0$.
\end{definition}

The next two lemmas will be key for the proof of Proposition~\ref{propeastwest}; their proof will be given (in slightly more generality) in the appendix.
\begin{lemma}\label{logfluc1} For $p> 1/2$ and $\ell \in \N_0$ write $\P_\ell^{p,+}$ for the probability measure $\P_\ell^p$ on $\mathbb W$ conditioned under the event $\{\tau_0^+(W) = \infty\}$. Then
    \begin{equation}\label{fluctpNnew}
   \max_{1\le k \le N} \max_{\tau_{k}(W)\le n < \tau_{k+1}(W)} \big(\log_N(k)- \log_N(W(n))\big) \to 0 \quad \mbox{ in } \P_1^{p(1,N),+}\mbox{-probability as } N\to \infty.
\end{equation} 
\end{lemma}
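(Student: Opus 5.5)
The plan is to reduce \eqref{fluctpNnew} to a union bound over $k$ of explicit gambler's-ruin probabilities for the conditioned walk. Throughout write $p=p(1,N)$, $q=1-p$, $\rho:=q/p=\tfrac1{1+a\varphi_N}$, so that $1-\rho\sim a\varphi_N$.

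\emph{Reduction to one-level events.} Fix $\delta>0$. Under $\P_1^{p,+}$ the walk never returns to $0$, so $W(n)\ge 1$ for all $n$. Hence for $k<N^{\delta}$ the quantity $\log_N(k)-\log_N(W(n))$ is at most $\log_N k<\delta$, and only levels $N^\delta\le k\le N$ matter. For such $k$ put $m_k:=\lceil kN^{-\delta}\rceil$. By the strong Markov property at $\tau_k$, the event that the inner maximum in \eqref{fluctpNnew} exceeds $\delta$ (up to a rounding term $O(1/(k\log N))$, absorbed by replacing $\delta$ with $\delta/2$) is contained in the event that the walk started from $k$ hits $m_k$ before $k+1$. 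It therefore suffices to show
\begin{equation*}
\sum_{N^\delta\le k\le N}\P_k^{p,+}\big(\tau_{m_k}<\tau_{k+1}\big)\to 0 .
\end{equation*}

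\emph{Doob transform and ruin formula.} The conditioned walk is the $h$-transform of the unconditioned walk with $h(j)=\P_j^p(\tau_0=\infty)=1-\rho^j$. On the event $\{\tau_{m}<\tau_{k+1}\}$ the walk sits at $m$ at the stopping time $\tau_m$, so
\begin{equation*}
\P_k^{p,+}(\tau_m<\tau_{k+1})=\P_k^{p}(\tau_m<\tau_{k+1})\,\frac{h(m)}{h(k)} .
\end{equation*}
The classical ruin formula gives
\begin{equation*}
\P_k^{p}(\tau_m<\tau_{k+1})=\frac{(1-\rho)\rho^{k-m}}{1-\rho^{k-m+1}}\le C\min\Big(\frac1{k-m},\ \varphi_N e^{-c\varphi_N(k-m)}\Big).
\end{equation*}
Since $k\ge N^\delta$, we have $k-m_k\ge k/2$, so the right-hand side is at most $C\min(1/k,\ \varphi_N e^{-c\varphi_N k})$.

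\emph{Case analysis.} I split according to the size of $m_k$ relative to $1/\varphi_N$.
\begin{itemize}
\item[(i)] If $m_k\ge 1/\varphi_N$, then $\varphi_N k\ge N^{\delta}$. The ruin probability is then at most $e^{-cN^\delta}$, and summing over at most $N$ values of $k$ still gives $o(1)$.
\item[(ii)] If $m_k<1/\varphi_N$, I bound the $h$-ratio. We have $h(m_k)\le C\varphi_N m_k\le C\varphi_N kN^{-\delta}$, while $h(k)\ge c\min(1,\varphi_N k)$. This gives
\begin{equation*}
\frac{h(m_k)}{h(k)}\le CN^{-\delta}\max(1,\varphi_N k).
\end{equation*}
For $k\le 1/\varphi_N$ the summand is therefore $\le CN^{-\delta}/k$. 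For $k>1/\varphi_N$ it is $\le CN^{-\delta}\varphi_N\cdot\varphi_N k\,e^{-c\varphi_N k}$.
\end{itemize}
Summing the first bound over $k\le N$ gives $O(N^{-\delta}\log N)$. Summing the second, viewed as a Riemann sum in $x=\varphi_N k$, gives $O(N^{-\delta})$. Both tend to $0$, which proves \eqref{fluctpNnew}.

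\emph{Main obstacle.} The delicate step is (ii). The naive bound $h(m)/h(k)\le1$ leaves a harmonic sum $\sum 1/k\sim\log N$ in the near-critical range $k\lesssim 1/\varphi_N$, and a non-vanishing sum $\sum\varphi_N e^{-c\varphi_N k}$ just above it. The gain must come from the $h$-transform factor $h(m_k)/h(k)$, which is essentially the ratio $m_k/k\approx N^{-\delta}$ of the Bessel-3-like conditioned walk. One must verify that the uniform comparisons $1-\rho^j\asymp\min(1,\varphi_N j)$ and the ruin-formula bounds hold with constants independent of $N$; for this I would use $1-\rho\sim a\varphi_N$ and the elementary inequalities $1-e^{-x}\asymp\min(1,x)$.
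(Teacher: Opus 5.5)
Your argument is correct, and it takes a genuinely different route from the paper. The paper never estimates level-by-level ruin probabilities. Its steps are:
\begin{enumerate}
\item The upward probabilities of the conditioned chain $W_p^*$ dominate those of the Bessel-like walk $W_{1/2}^*$ (Lemma~\ref{driftcompar}).
\item Hence the fluctuation functional for $W_p^*$ is stochastically dominated by the one for $W_{1/2}^*$ (Corollary~\ref{downexc}).
\item $W_{1/2}^*$ is controlled pathwise by the law of the iterated logarithm of \cite{hambly2003law}, which gives $\log_N W^*(n)=\log_N\sqrt n+o(1)$ uniformly in $n\le N^\beta$ almost surely.
\item A sandwich between $\tau_k$ and $\tau_{k+1}$ finishes the proof (Proposition~\ref{HKKcor}).
\end{enumerate}

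Your union bound, using the exact ruin formula and the $h$-transform factor, replaces both the comparison step and the external almost-sure result by an elementary computation. It also gives an explicit rate, $O(N^{-\delta}\log N)$ for the probability of a $\delta$-dip. And it shows that the harmonic sum is beaten precisely by the Bessel-3-type factor $h(m_k)/h(k)\approx m_k/k$.

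The paper's route buys universality. Once $W_{1/2}^*$ is controlled, the statement holds for every sequence $p(N)\in(\tfrac12,1)$ with no $\varphi_N$-dependent estimates (Corollary~\ref{corlogfluc}). This general form is what the paper recycles via time reversal (Lemma~\ref{reversallemma}) for Lemma~\ref{logfluc0}. Your estimates are uniform in $\varphi_N\in(0,1]$ for fixed $a$, which suffices here. They would also transfer through the same reversal, since the reversed walk has up-probability $1-p(0,N)=p(1,N)$.

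One displayed claim needs fixing. The bound $\P_k^p(\tau_m<\tau_{k+1})\le C\min\big(\frac1{k-m},\varphi_N e^{-c\varphi_N(k-m)}\big)$ is false as written. For $k-m=1$ the probability equals $\rho/(1+\rho)$, which tends to $\tfrac12$ when $\varphi_N\to0$. What actually holds is:
\begin{itemize}
\item the bound $\frac1{k-m+1}$ always;
\item the bound $\rho^{k-m}$ always;
\item the bound $C\varphi_N e^{-c\varphi_N(k-m)}$ once $\varphi_N(k-m)$ is bounded below.
\end{itemize}
You only invoke the exponential form when $\varphi_N k>1$ (so $\varphi_N(k-m_k)\ge\frac12$), or in case (i). In case (i) one gets $\varphi_N k\ge N^\delta/2$ rather than $N^\delta$, which is harmless. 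So the argument goes through unchanged.
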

\begin{lemma}\label{logfluc0} For $w \in \mathbb W$, write
    $S(w):= \max\limits_{0\le n \le \tau_0(w)} w(n)$ for the maximum of the path $w$ before it first visits $0$, and abbreviate
    $k(N):= \lfloor \tfrac N{\sqrt{\log N}} \rfloor$. Then with $\sigma_k$ defined in \eqref{defsigma} 
    \begin{align}\label{flucdownfirst}
       \max_{0\le n <  \sigma_{S(W)}(W)}(\log_N (S(W)) - \log_N(W(n))\to 0 
        &\quad \mbox{ in } \P_{k(N)}^{p(0,N)}\mbox{-probability as } N\to \infty 
    \intertext{and}
    \label{flucdown2new}
         \max_{S(W)\ge k > 0}\,\max_{\sigma_{k}(W)\le n <  \sigma_{k-1}(W)}(\log_N (k) - \log_N(W(n))\to 0
          &\quad \mbox{ in } \P_{k(N)}^{p(0,N)}\mbox{-probability as } N\to \infty.
    \end{align}
\end{lemma}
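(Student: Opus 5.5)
The plan is to reduce both assertions to a single statement about the subcritical walk: it never makes a large multiplicative upcrossing. Fix $\delta\in(0,1)$ and consider the event
$$
  A_N^\delta:=\big\{\exists\, j\ge 1,\ \exists\, n_1<n_2\le \tau_0(W):\ W(n_1)\le j,\ W(n_2)\ge \tfrac12 jN^{\delta}\big\}.
$$
I will show two things: first, $\P^{p(0,N)}_{k(N)}(A_N^\delta)\to 0$; second, on the complement of $A_N^\delta$, both maxima in \eqref{flucdownfirst} and \eqref{flucdown2new} are at most $\delta+O(1/\log N)$. Since $\delta$ is arbitrary, this proves the lemma.

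\emph{Deterministic step.} I begin with nonnegativity. By the definition \eqref{defsigma} of $\sigma_k$, the path does not revisit $k$ after $\sigma_k$ and before $\tau_0$, and it ends at $0<k$. Hence $W(n)<k$ for $\sigma_k<n\le\tau_0$. Likewise $W(n)\le S(W)$ for all $n\le\tau_0$. So all differences $\log_N k-\log_N W(n)$ appearing in \eqref{flucdown2new}, and $\log_N S(W)-\log_N W(n)$ in \eqref{flucdownfirst}, are nonnegative, at least as long as $W(n)\ge1$. This holds because $n<\sigma_{k-1}\le\tau_0$ (with the convention for $k=1$ that $\sigma_0=\tau_0$).

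Next suppose that for some $k$ and some $n\in[\sigma_k,\sigma_{k-1})$ we have $\log_N k-\log_N W(n)>\delta$, i.e.\ $W(n)<kN^{-\delta}$. This forces $k>N^\delta$, since $W(n)\ge1$. Because $n<\sigma_{k-1}$, the walk visits $k-1\ge k/2$ at some later time before $\tau_0$. With $j:=\lceil kN^{-\delta}\rceil$, this is an upcrossing from a level $\le j$ to a level $\ge \tfrac12 jN^\delta$ (up to harmless rounding), so $A_N^\delta$ occurs. The same reasoning applies to \eqref{flucdownfirst}: a dip to below $S(W)N^{-\delta}$ at some $n<\sigma_{S(W)}$ is followed by a return to $S(W)$.

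\emph{Probabilistic step.} Since $p(0,N)<\tfrac12$, the walk stopped at $\tau_0$ is a nonnegative supermartingale. Optional stopping therefore gives, uniformly in $N$,
$$
  \P^{p(0,N)}_j\big(\text{hit } m \text{ before } 0\big)\le j/m .
$$
To control all $j$ at once I will discretise along the dyadic grid $j_i=2^i$. If the walk is at some level $\le j$ at time $n_1$, then, having started from $k(N)$ and moving by $\pm1$ steps, it has already hit the grid level $j_i$ with $j\le j_i<2j$ (or it started below $j_i$, which is covered the same way). From the hitting time $\tau_{j_i}$ onward, the strong Markov property and the supermartingale bound show that the probability of ever reaching $\tfrac14 j_iN^\delta$ before $0$ is at most $4N^{-\delta}$.

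Only grid levels $j_i\le N$ are relevant, up to a negligible event: the walk must stay below, say, $N^2$, and $\P(S(W)\ge N^2)\le k(N)/N^2\to0$ by the same bound. So there are $O(\log N)$ grid levels, and a union bound gives
$$
  \P(A_N^\delta)\le o(1)+O(\log N\cdot N^{-\delta})\to 0 .
$$

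The main obstacle is the need for uniformity over all levels $k$, or equivalently over all possible starting points of a large upcrossing. A naive union bound over every integer level $j\le N$ would cost a factor $N$, which destroys the estimate. The dyadic-grid reduction, combined with the scale-invariant bound $j/m$ for the supermartingale, is the device I rely on to get around this. I will need some care with rounding ($k-1$ versus $k$, ceilings), with the boundary case $k=1$, and with levels $k\le N^\delta$, for which the claim is trivial.
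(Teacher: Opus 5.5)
Your proof is correct, and it follows a genuinely different, more elementary route than the paper's. The paper never estimates the subcritical walk directly. It couples $W$ under $\P_{k(N)}^{p(0,N)}$ with the walk started at $N$ and conditioned to hit $0$ before returning to $N$, and reverses time (Lemma~\ref{reversallemma}). This turns it into the supercritical walk conditioned to avoid $0$, and the last-visit times $\sigma_k$ become first-passage times $\tau_k$. The dips between consecutive first-passage times are then controlled by dominating with the Bessel-like walk $W^*_{1/2}$ (Lemma~\ref{driftcompar}) and invoking the Hambly--Kersting--Kyprianou law of the iterated logarithm, $\log W^*(n)=\log\sqrt n+o(\log n)$.

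You instead reduce both maxima deterministically to the absence of a multiplicative upcrossing by a factor of order $N^\delta$. This uses only two facts: after $\sigma_k$ the path stays below $k$, and every $n<\sigma_{k-1}$ is followed by a visit to $k-1$ before $\tau_0$. You then rule out that event with the scale-invariant gambler's-ruin bound $\P_j(\tau_m<\tau_0)\le j/m$, applied via the strong Markov property at the first hitting time of each of $O(\log N)$ dyadic levels.

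Your route needs no time reversal, no coupling and no law of the iterated logarithm. It gives the explicit bound $O(N^{-\delta}\log N)+k(N)N^{-2}$, and it works for every $p\le\tfrac12$ and every starting point polynomial in $N$. The paper's route, in exchange, obtains both walls of the house from one and the same Bessel-walk estimate (Corollary~\ref{corlogfluc}), which makes the time symmetry between phases 1 and 5 explicit. Your device would in fact also handle Lemma~\ref{logfluc1}: with $r=\tfrac{1-p}{p}$ and $j<k$, one has $\P^{p,+}_k(\tau_j<\infty)=\frac{r^{-j}-1}{r^{-k}-1}\le \frac jk$ by convexity of $x\mapsto r^{-x}-1$.

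Two cosmetic slips:
\begin{enumerate}
\item On $\{S(W)<N^2\}$ the relevant $j$ satisfy $j\le 2N^{2-\delta}$, not $j\le N$. The count is still $O(\log N)$ dyadic levels, so nothing changes.
\item The choice $j:=\lceil kN^{-\delta}\rceil$ can fail $k-1\ge \tfrac12 jN^\delta$ when $k$ is just above $N^\delta$. Taking $j:=W(n)$ instead gives $jN^\delta<k\le 2(k-1)$ directly and removes all rounding issues.
\end{enumerate}
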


\subsection{Proof of Proposition~\ref{propeastwest}}\label{pfsecGW1}
In this section we will prove the two parts (cases $i=1$ and $i=0$) of Proposition~\ref{propeastwest}, based on the fluctuation results on discrete-time embeddings that will be provided in Section~\ref{pfpropM1}.

\subsubsection{Case $i=1$}
Let $$\overline Y_1^N(t):= \max_{-\eps \le u \le t} Y_1^N(u),  \qquad  t \in [-\eps, t^{(a,b)}]$$
be the running maximum of $Y_1^N$ defined in~\eqref{defY1}. Since the discrete-time embedding of $Z_1^N$ (conditioned to survival) is  the Markov chain $W$ under the measure $\P_1^{p(1,N)}$, we readily deduce from Lemma~\ref{logfluc1} the following
\begin{lemma} \label{distYYbar}
\begin{equation}\label{Ybar}
\sup_{-\eps \le t \le t^{(a,b)}}|\overline Y_1^N(t)- Y_1^N(t)| \to 0 \quad \mbox{in probability as } N\to \infty.
\end{equation}
\end{lemma}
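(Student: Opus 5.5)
The plan is to reduce the statement to Lemma~\ref{logfluc1} by passing to the discrete-time embedding of $Z_1^N$. Let $J^N$ denote the jump chain of $Z_1^N$, that is, $Z_1^N(t) = J^N(\nu^N(t))$, where $\nu^N(t)$ is the number of jumps up to time $t$. Conditioned on survival, $J^N$ has law $\P_1^{p(1,N),+}$. Since a time change preserves the ordering of the values visited, we have $\overline Y_1^N(t) - Y_1^N(t) = 0$ for $t\in[-\eps,0)$, because both vanish there. For $t\ge 0$, put $n=\nu^N(t\varphi_N^{-1}\log N)$ and let $k=\max_{m\le n}J^N(m)$. Then $\tau_k(J^N)\le n<\tau_{k+1}(J^N)$, and
$$\overline Y_1^N(t)-Y_1^N(t) = \log_N^+(k) - \log_N^+(J^N(n)).$$
Because $J^N(n)\ge 1$ under conditioning on survival (the chain never returns to $0$), both terms are genuine logarithms, and we get $\log_N^+ = \log_N$ on positive integers.

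The key step is to control $k$. Lemma~\ref{logfluc1} bounds the quantity $\log_N(k)-\log_N(J^N(n))$ uniformly over $1\le k\le N$. So the supremum over $t\in[-\eps,t^{(a,b)}]$ is dominated by the maximum in \eqref{fluctpNnew}, provided that the running maximum of $Z_1^N$ up to time $t^{(a,b)}\varphi_N^{-1}\log N$ stays at most $N$ with high probability. If it exceeds $N$, I would handle this in one of two ways. The first option is to enlarge the range in Lemma~\ref{logfluc1} to $k\le N^2$; the appendix proves it ``in slightly more generality'', and any polynomial range should work with the same argument. The second option is to argue directly. By Propositions~\ref{Zph1} and~\ref{Zph2} (via the strong Markov property at $T^N_{\lfloor \log N/\varphi_N\rfloor}$), the path stays below $N^{1+\delta}$ w.h.p. on that time window. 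Once the running maximum $k$ exceeds $N$, the quantity $\log_N k-\log_N J^N(n)$ is controlled because the walk started from level $\ge N\gg\varphi_N^{-1}$ only drops by a factor $N^{\delta}$ with vanishing probability. This follows by the same gambler's-ruin estimate as in the lemma: for $p>1/2$, the probability of going from $k$ down to $kN^{-\delta}$ is at most $((1-p)/p)^{k(1-N^{-\delta})}$, which is $\exp(-c\varphi_N k)$. A union bound over the at most $N^{1+\delta}$ levels then gives a negligible total.

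With this domination in hand, the convergence in \eqref{fluctpNnew} yields \eqref{Ybar}. The main obstacle is precisely the truncation at $k\le N$ in Lemma~\ref{logfluc1}, and I would first try the direct gambler's-ruin union bound described above. Its bound $\sum_{k\ge N} \exp(-c\varphi_N k(1-N^{-\delta}))$ is tiny because $\varphi_N N\gg N^{1-\eta}$ by \eqref{lowerboundphi}. The remaining steps are bookkeeping that the time change does not affect.
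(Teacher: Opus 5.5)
Your proof is correct and follows the paper's route: the paper likewise reads $\overline Y_1^N-Y_1^N$ off the jump chain of $Z_1^N$, which under survival has law $\P_1^{p(1,N),+}$, and deduces the claim directly from Lemma~\ref{logfluc1}. The paper does not address the fact that this lemma only controls levels $k\le N$, while $Z_1^N$ can exceed $N$ before time $t^{(a,b)}\varphi_N^{-1}\log N$. Your extra step therefore fills a genuine, if small, omission, and both of your fixes work. The first is immediate from Corollary~\ref{corlogfluc} applied to the re-indexed sequence $p(1,\lceil\sqrt{M}\,\rceil)$ at $M=N^2$. For the second, note that conditioning on survival is an $h$-transform with increasing $h$, so it only lowers the probability of hitting a lower level; hence your bound $((1-p)/p)^{k(1-N^{-\delta})}$ remains valid under $\P_1^{p(1,N),+}$.
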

Next we state and prove a basic lemma in the framework of Section~\ref{metricM1}.
\begin{lemma}\label{lem:m1-sup}
    For $I= [\alpha, \beta]$, $t\in I$ and $f\in \mathcal D_I$ let 
    $$
      \overline f(t)
        := \max_{\alpha \le t'\le t} f(t'), \qquad
      \widetilde f(t)
        := \max_{\beta +\alpha  -t \le r\le \beta} f(r)
    $$
    be the running maximum of $f$ and of its time-reversal, respectively.
    Then
    $${\rm a)} \quad d_{M_1}(f,\overline f)\le \sup_{t\in I} | \overline f(t)-f(t) |, \qquad  \quad {\rm b)}\quad  d_{M_1}(f,\widetilde f)\le \sup_{t\in I} | \widetilde f(t)-f(t) |. $$
\end{lemma}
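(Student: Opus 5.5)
The plan is to exhibit explicit parametric representations of $f$ and $\overline f$ that share the same time coordinate at every parameter value, so that their distance is purely vertical and bounded by $\sup_t|\overline f(t)-f(t)|$. Part b) will then follow by time reversal.

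Put $\delta:=\sup_{t\in I}|\overline f(t)-f(t)|$. Fix an arbitrary parametric representation $\phi=(\phi^{(1)},\phi^{(2)})$ of $f$, which exists by Remark~\ref{existpar} b). I would define $\gamma(u):=(\phi^{(1)}(u),\psi(u))$, where
$$\psi(u):=\max\Big(\overline f\big(\phi^{(1)}(u)-\big),\ \max_{v\le u}\phi^{(2)}(v)\Big)$$
and $\overline f(\alpha-):=f(\alpha)$. That is, $\psi$ is the running maximum of the spatial coordinate along the extended graph of $f$. I then have to check three things.

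\emph{(i) $\gamma$ maps into $\mathbb G(\overline f)$.} Take $t=\phi^{(1)}(u)$. If $t$ is a continuity point of $f$, then $\psi(u)=\overline f(t)$. If $t$ is a jump time, the parameters $u$ with $\phi^{(1)}(u)=t$ traverse the segment between $f(t-)$ and $f(t)$ monotonically. The running maximum along this segment then moves monotonically from $\overline f(t-)$ to $\overline f(t)=\max(\overline f(t-),f(t))$, so it stays in the segment of $\mathbb G(\overline f)$ at time $t$.

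\emph{(ii) $\gamma$ is onto and increasing for $\preceq$ on $\mathbb G(\overline f)$.} Monotonicity holds because $\phi^{(1)}$ is nondecreasing and, at fixed $t$, $\psi$ is nondecreasing while $\overline f$ jumps only upward. Surjectivity follows from the continuity of $\psi$ in $u$, which holds because $\phi$ is continuous (being a surjective monotone map onto a connected ordered set) and the running maximum of a continuous function is continuous.

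\emph{(iii) The distance bound.} I need $|\psi(u)-\phi^{(2)}(u)|\le\delta$ for all $u$. At a continuity point $t$ this is $|\overline f(t)-f(t)|\le\delta$. At a jump time $t$, the point $\phi^{(2)}(u)$ lies between $f(t-)$ and $f(t)$, and $\psi(u)$ is the running maximum at that position. Since $\psi(u)\ge\phi^{(2)}(u)$, it suffices to bound $\psi(u)-\phi^{(2)}(u)$. I would do this by cases. If the jump of $f$ is downward, then $\psi(u)=\overline f(t-)$, and
$$\psi(u)-\phi^{(2)}(u)\le \overline f(t-)-f(t)=\overline f(t)-f(t)\le\delta,$$
because $\overline f(t)\ge \overline f(t-)$. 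If the jump is upward, then $\psi(u)-\phi^{(2)}(u)$ is nonincreasing along the segment and starts at $\overline f(t-)-f(t-)\le\delta$, where the last inequality is obtained by taking left limits. Taking the infimum in \eqref{defdM1} then gives a).

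\emph{Part b).} For b) I apply a) to the reversed function $r\mapsto f(\beta+\alpha-r)$. The running maximum of this reversal is exactly the reversal of $\widetilde f$. Time reversal maps parametric representations to parametric representations and preserves $d_{M_1}$, except for the right-continuity convention, which does not affect extended graphs. I expect this convention issue to be the only delicate point, together with the bookkeeping at jump times in (iii).
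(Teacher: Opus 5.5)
Your part a) is the paper's proof. You keep the time component of a parametric representation of $f$ and replace the space component by its running supremum $\sup_{w\le u}\rho(w)$, and your checks (i)--(iii) correctly fill in what the paper leaves to the reader. The term $\overline f(\phi^{(1)}(u)-)$ in $\psi$ is redundant: surjectivity and monotonicity of $\phi$ give $\max_{v\le u}\phi^{(2)}(v)\ge\sup_{s<\phi^{(1)}(u)}f(s)$. You should say so, because your continuity argument for $\psi$ only covers the running-maximum term, and $u\mapsto\overline f(\phi^{(1)}(u)-)$ alone is not continuous.

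For part b), the identity you invoke is false for $\widetilde f$ as written in the statement. With $g(r):=f(\beta+\alpha-r)$ one has $\overline g(r)=\max_{\beta+\alpha-r\le s\le\beta}f(s)=\widetilde f(r)$, so $\widetilde f$ \emph{is} the running maximum of the reversal. Applying a) to $g$ therefore bounds $d_{M_1}(g,\widetilde f)$, not $d_{M_1}(f,\widetilde f)$. As written, $\widetilde f$ need not even be c\`adl\`ag: $f=\1_{[\alpha,c)}$ gives $\widetilde f=\1_{(\alpha+\beta-c,\beta]}$. What you actually prove is b) for the backward running maximum $\widehat f(t):=\max_{t\le r\le\beta}f(r)$. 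That is the right reading, since it is what the paper applies to the non-increasing function $\widetilde Y_0^N(t)=\max_{t\le s\le t^{(a,b)}+\eps}Y_0^N(s)$; but you should flag the discrepancy rather than assert the identity.

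Even for $\widehat f$, the reversal is not as harmless as you claim. A jump of $f$ at $\beta$ would have to become a jump at the left endpoint $\alpha$, which an element of $\mathcal D_I$ (with $g(\alpha-)=g(\alpha)$) cannot carry. For example, $f=\1_{\{\beta\}}$ reverses to $g\equiv 0$, while $\widehat f\equiv 1$. The direct analogue of a) avoids this: use $\widehat\rho(u):=\sup_{u\le w\le 1}\rho(w)$ with the same time component. Your checks go through with the obvious changes; at an upward jump use $\widehat f(t)-f(t-)\le\widehat f(t-)-f(t-)\le\delta$, which holds because $\widehat f(t-)=\max(f(t-),\widehat f(t))$.

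Finally, both parts are instances of the general bound $d_{M_1}(f,g)\le\sup_{t\in I}|f(t)-g(t)|$ for arbitrary $f,g\in\mathcal D_I$. To see this, take one continuous time component that pauses at each of the countably many jump times of $f$ or $g$. Then traverse both vertical segments there by the same linear interpolation, so the vertical gap is at most $\max(|f(t-)-g(t-)|,|f(t)-g(t)|)$. This route needs no running-maximum structure at all.
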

\begin{proof}
    Let $\gamma= (\lambda, \rho)$ be a parametric representation of $f$, and construct from it a parametric representation $\overline \gamma = (\overline \lambda, \overline \rho)$ of $\overline f$ by taking $\overline \lambda := \lambda$ and $\overline \rho(u):= \sup_{0\le w\le u} \rho(w)$, $u\in [0,1]$. The first assertion then follows by the definition of $d_{M_1}$. The proof of the second assertion is analogous.
\end{proof}
An immediate consequence of Lemmas~\ref{distYYbar} and~\ref{lem:m1-sup} a) is the following statement.
\begin{cor}\label{cor:yybar-m1}
We have
  $$
    d_{M_1}(Y_1^N, \overline Y_1^N) \to 0 
      \quad \mbox{in probability as } N\to \infty.
  $$
\end{cor}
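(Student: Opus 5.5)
The corollary follows by combining the two preceding lemmas pathwise. The plan is to apply Lemma~\ref{lem:m1-sup}~a) on the interval $I=[-\eps, t^{(a,b)}]$ with $f:=Y_1^N$, realization by realization. Since $Y_1^N$ is a random element of $\mathcal D_I$, its running maximum
$\overline f(t)=\max_{-\eps\le u\le t}Y_1^N(u)$ is exactly $\overline Y_1^N(t)$ as defined before Lemma~\ref{distYYbar}. Lemma~\ref{lem:m1-sup}~a) therefore gives, for every $\omega$,
$$
d_{M_1}\big(Y_1^N,\overline Y_1^N\big)\le \sup_{-\eps\le t\le t^{(a,b)}}\big|\overline Y_1^N(t)-Y_1^N(t)\big| .
$$

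Two small points need checking before this bound is used. First, $Y_1^N$ must indeed lie in $\mathcal D_I$: it is right continuous with left limits, since $Z_1^N$ is a pure jump process and $\log_N^+$ and the indicator $\mathbf 1_{\{t\ge0\}}$ preserve càdlàg paths. Second, the running maximum of a càdlàg function is again càdlàg and nondecreasing, so $\overline Y_1^N\in\mathcal D_I$ and the $M_1$ distance on the left-hand side is well defined.

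For any $\delta>0$ the displayed inequality yields
$$\P\big(d_{M_1}(Y_1^N,\overline Y_1^N)>\delta\big)\le \P\Big(\sup_{t}|\overline Y_1^N(t)-Y_1^N(t)|>\delta\Big).$$
The right-hand side tends to $0$ by Lemma~\ref{distYYbar}, which proves the corollary.

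There is no real obstacle here. The substantive input is Lemma~\ref{distYYbar}, which rests on the fluctuation bound of Lemma~\ref{logfluc1}, and it is taken as given. The only care needed is that the parametric representation built in the proof of Lemma~\ref{lem:m1-sup} (the time component $\lambda$ is kept, and $\rho$ is replaced by its running supremum) really traverses the extended graph of $\overline Y_1^N$ monotonically in the order $\preceq$. This holds because the extended graph of a nondecreasing function is traversed in the correct order by a monotone height coordinate.
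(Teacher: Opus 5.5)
Your proposal is correct and follows exactly the paper's route. You apply Lemma~\ref{lem:m1-sup}~a) pathwise with $f=Y_1^N$ on $I=[-\eps,t^{(a,b)}]$ to bound $d_{M_1}(Y_1^N,\overline Y_1^N)$ by the sup-distance, then invoke Lemma~\ref{distYYbar}. Your additional checks (càdlàg paths, and that the running-supremum construction gives a valid parametric representation of $\overline Y_1^N$) are sound and only make explicit what the paper calls an immediate consequence.
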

\begin{definition}
  We put 
  \begin{equation}\label{defzeta}
    \zeta_N
     := \inf\big\{t\ge 0 \mid Y_1^N(t) \ge \log_N\lfloor\tfrac{\log N}{\varphi_N}\rfloor\big\},
  \end{equation}
  and
  $$
    y_1^N(t) := 
    \begin{cases}
      0 &\mbox{for }-\eps\le t<0,\\
      \overline Y_1^N(t) & \mbox{for } 0 \le t \le \zeta_N,\\
      \log_N^{} \lfloor \tfrac {\log N}{\varphi_N}\rfloor  +a(t-\zeta_N)
        &\mbox{for } \zeta_N\le t\le t^{(a,b)}. 
    \end{cases}
  $$
\end{definition}
\begin{remark}\label{zetaremark}
  \begin{enumerate}[a)]
    \item From~\eqref{defzeta} and the definitions of $Y_1^N$ and $T_{\big\lfloor \tfrac{\log N}{\varphi_N}\big\rfloor}$ we have
    \begin{equation*}
        \zeta_N
        = \tfrac{\varphi_N}{\log N} T_{\big\lfloor \tfrac{\log N}{\varphi_N} \big\rfloor}.
    \end{equation*}
    \noindent Hence Proposition~\ref{Zph1} implies
    \begin{equation}\label{zetasmall}
      \zeta_N \to 0 \quad \mbox{in probability as } N\to \infty.
    \end{equation}
    \item We now will argue that  
    \begin{equation}\label{hNclose}
      d_{M_1} (y_1^N, y_1)
        \to 0 \quad \mbox{in probability as }  N\to \infty.
    \end{equation}
  \end{enumerate}
Indeed, the extended graph of $y_1^N \big |_{[-\eps,\zeta_N]}$ consists of vertical and horizontal segments and has length $\log_N^{} \lfloor \tfrac {\log N}{\varphi_N}\rfloor+\eps+\zeta_N$. Let $\Gamma_N=(\Lambda_N, R_N)$ be the parametric representation of $y_1^N$ which traverses its extended graph at constant speed, and let $u_N$ be the parameter value in $[0,1]$ for which $R_N$ reaches height $\log_N^{} \lfloor \tfrac {\log N}{\varphi_N}\rfloor$. Now consider that parametric representation $\widetilde \Gamma_N$ of $y_1$ whose height component $\widetilde R_N(u)$ is identical to $R_N(u)$ for $u \le u_N$, and which for $u\ge u_N$ moves along the extended graph of $y_1$ at constant speed. After the parameter time $u_N$ both height components are continuous, and $y_1$ and $y_1^N$ have the same slope above height $\max(b, \log_N\lfloor \tfrac {\log N}{\varphi_N}\rfloor)$, hence it   follows readily that $$\sup_{u\in[0,1]}|\Gamma_N(u)-\widetilde \Gamma_N(u)| = O(\zeta_N),$$ which together with~\eqref{zetasmall} implies~\eqref{hNclose}.  
\end{remark}
\begin{lemma}\label{lem:Ybar-h1N}  As $N\to \infty$,
  $$
    \sup_{-\eps \le t \le t^{(a,b)}}|\overline Y_1^N(t)- y_1^N(t)|
      \to 0 \quad \mbox{in probability}.
  $$
\end{lemma}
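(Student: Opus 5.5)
We split the time interval $[-\eps,t^{(a,b)}]$ at $\zeta_N$. On $[-\eps,0)$ both functions vanish, since $Y_1^N\equiv 0$ there. On $[0,\zeta_N]$ the functions $y_1^N$ and $\overline Y_1^N$ coincide by definition. So it remains to control
$$\sup_{\zeta_N\le t\le t^{(a,b)}}\Big|\overline Y_1^N(t)-\log_N\lfloor\tfrac{\log N}{\varphi_N}\rfloor-a(t-\zeta_N)\Big|.$$

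For this we use the strong Markov property of $Z_1^N$ at the stopping time $T_{j_N}$, where $j_N=\lfloor \tfrac{\log N}{\varphi_N}\rfloor$. Without the conditioning, the post-$T_{j_N}$ process is a copy $Z'$ of $Z_1^N$ started at $j_N$. Conditioning on survival changes its law only by the factor $\P_{j_N}(\text{survival})\to 1$ (this was already used in the proof of Proposition~\ref{Zph1}). Hence statements that hold with probability tending to one transfer from $Z'$ to the conditioned process. Proposition~\ref{Zph2}, i.e.\ Lemma~\ref{lem:CMT} a) with $\beta_N=\log_N j_N\to b$, gives
$$\sup_{s\le T}\big|\log_N^+ Z'(s\varphi_N^{-1}\log N)-(b+as)\big|\to 0$$
in probability for $T=t^{(a,b)}$. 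Since $\log_N j_N\to b$ and $\zeta_N\to 0$ in probability by \eqref{zetasmall}, it follows that $\sup_{\zeta_N\le t\le t^{(a,b)}}|Y_1^N(t)-y_1^N(t)|\to0$ in probability. Here $Y_1^N(t)=\log_N^+Z'((t-\zeta_N)\varphi_N^{-1}\log N)$ for $t\ge\zeta_N$, and $b+a(t-\zeta_N)$ differs from $y_1^N(t)$ by at most $|b-\log_N j_N|$.

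It remains to pass from $Y_1^N$ to its running maximum $\overline Y_1^N$. By Lemma~\ref{distYYbar}, $\sup_t|\overline Y_1^N-Y_1^N|\to0$ in probability. The triangle inequality, combined with the bound of the previous paragraph, therefore gives the claim on $[\zeta_N,t^{(a,b)}]$. Alternatively, one can argue directly: $y_1^N$ is nondecreasing on $[\zeta_N,t^{(a,b)}]$, and $\overline Y_1^N(\zeta_N)=\log_N j_N=y_1^N(\zeta_N)$ (the running maximum up to $\zeta_N$ equals the level just hit). Hence uniform closeness of $Y_1^N$ to a nondecreasing function passes to the running maximum with the same error.

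The main obstacle is the transfer of Proposition~\ref{Zph2} to the process conditioned on survival and restarted at the random time $T_{j_N}$. We handle it by the bound $\P(A\mid\text{surv})\le \P_{j_N}(A)/\P_{j_N}(\text{surv})$ applied to the bad event $A$ for the restarted process. A minor additional point is that the time horizon becomes $t^{(a,b)}-\zeta_N\le t^{(a,b)}$, which is covered by taking $T=t^{(a,b)}$.
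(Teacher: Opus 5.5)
Your proof is correct and follows essentially the same route as the paper. Like the paper, you use agreement on $[-\eps,\zeta_N]$, the strong Markov property at $T_{j_N}$ combined with Proposition~\ref{Zph2} on $[\zeta_N,t^{(a,b)}]$, the limit $\log_N j_N\to b$, and Lemma~\ref{distYYbar} (the triangle inequality) to pass to the running maximum. Your bound $\P(A\mid\text{surv})\le \P_{j_N}(A)/\P_{j_N}(\text{surv})$ makes explicit the transfer to the process conditioned on survival, which the paper leaves implicit. Your alternative monotonicity argument, based on $\overline Y_1^N(\zeta_N)=\log_N j_N=y_1^N(\zeta_N)$, is also valid and shows that Lemma~\ref{distYYbar} is not actually needed for this particular statement.
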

\begin{proof}  For $ Z_1^N$ being  a Galton--Watson process with jump rates~\eqref{Zrates} and  $Z_1^N(0) = \lfloor \tfrac{\log N}{\varphi_N} \rfloor$  we have by Proposition~\ref{Zph2}

\begin{equation}
\sup_{0\le t \le t^{(a,b)}} \big |\log_N Z_1^N(t\varphi_N^{-1}\log N) -(b+at)\big | \to 0 \mbox{ in probability as } N\to \infty.
\end{equation}
By the strong Markov property 
this implies
\begin{equation}
\sup_{\zeta_N\le t \le t^{(a,b)}} \big |Y_1^N(t) -(b+a(t-\zeta_N))\big | \to 0 \mbox{ in probability as } N\to \infty.
\end{equation}
By Lemma~\ref{distYYbar} combined with~\eqref{propphiN} we thus also have
\begin{equation}\label{westroof}
\sup_{\zeta_N\le t \le t^{(a,b)}} \big |\overline Y_1^N(t) -(\log_N^{} \lfloor \tfrac {\log N}{\varphi_N}\rfloor +a(t-\zeta_N))\big | \to 0 \mbox{ in probability as } N\to \infty.
\end{equation}
Since $\overline Y_1^N(t)$ and $y_1^N(t)$ are equal for $t\in [-\eps, \zeta_N]$, the assertion of the lemma is proved.
\end{proof} 
Because both $\overline Y_1^N(t)$ and $y_1^N(t)$ are non-decreasing in $t$, it is readily checked (by an argument similar as in the proof of Lemma~\ref{lem:m1-sup}) that $d_{M_1}(\overline Y_1^N, y_1) \le \sup_{-\eps \le t \le t^{(a,b)}}|\overline Y_1^N(t)- y_1^N(t)|$. Hence from Lemma~\ref{lem:Ybar-h1N} we obtain
\begin{cor}\label{cor:Ybar-h1N} As $N\to \infty$,
  $$
    d_{M_1} (\overline Y_1^N, y_1^N )
      \to 0 \mbox{ in probability}.
  $$ 
\end{cor}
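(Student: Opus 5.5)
The statement to prove is Corollary~\ref{cor:Ybar-h1N}: $d_{M_1}(\overline Y_1^N, y_1^N)\to0$ in probability. The plan is to bound this $M_1$ distance by the uniform distance and then invoke Lemma~\ref{lem:Ybar-h1N}. The whole task therefore reduces to a deterministic inequality.

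\emph{Deterministic inequality.} For non-decreasing $f,g\in\mathcal D_I$ with $I=[-\eps,t^{(a,b)}]$, we will show
$$d_{M_1}(f,g)\le \sup_{t\in I}|f(t)-g(t)|=:\delta.$$
Two facts hold for monotone functions. First, the extended graph $\mathbb G(f)$ is a monotone curve. Second, the order $\preceq$ on $\mathbb G(f)$ coincides with the product (coordinatewise) order.

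We will not use a time-aligned parametrization. Instead we parametrize both graphs by the "diagonal" coordinate $s=t+x$. This is strictly increasing along each extended graph and continuous in the parameter. Concretely, for each value $c$ of $t+x$ there is exactly one point of $\mathbb G(f)$ on the line $\{t+x=c\}$, because the curve is monotone and connected. Rescaling $c$ affinely to $[0,1]$ gives parametric representations $\phi$ and $\gamma$ of $f$ and $g$. The ranges of $t+x$ may differ slightly at the endpoints; there we extend by constancy.

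\emph{Key estimate.} Fix $c$, and let $(t_1,x_1)\in\mathbb G(f)$ and $(t_2,x_2)\in\mathbb G(g)$ be the two points with $t_1+x_1=t_2+x_2=c$. Suppose $t_1\le t_2$, so $x_1\ge x_2$. By monotonicity of $g$, $x_2\ge g(t_1-)$, hence $x_2\ge f(t_1-)-\delta$. In turn $f(t_1-)\ge x_1-(\text{jump of } f \text{ at } t_1)$.

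This shows that the diagonal parametrization alone does not immediately give $|x_1-x_2|\le\delta$. A cleaner route is a hybrid parametrization: on each horizontal stretch move at speed in $t$, and on each vertical stretch move in $x$. Since $|f-g|\le\delta$ everywhere, the two curves lie within a $\delta$-tube of each other in the vertical direction. Being monotone curves, they are then within sup-distance $\delta$ after parametrizing both by the common level $c$, using the $\ell^\infty$ norm on $\R^2$.

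The hard part is showing rigorously that two monotone staircase-type curves at vertical distance $\le\delta$ (in the sense of values and left limits) are at distance $\le\delta$ along the diagonal parametrization. Measured in $\ell^\infty$, the line $t+x=c$ meets both curves. If $t_2-t_1>\delta$, then $g(t_1+\delta)\le x_2$ and $f(t_1+\delta)\ge x_1\ge x_2$. Now $x_1 - x_2 = t_2-t_1>\delta$, and $g(t_2-)\le x_2<x_1-\delta\le f(t_1)-\delta\le f(t_2-)-\delta$. This contradicts $|f-g|\le\delta$ at points just left of $t_2$. So $|t_1-t_2|=|x_1-x_2|\le\delta$. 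Allowing arbitrary slack, one gets $d_{M_1}(f,g)\le\delta$, and hence $\le\delta+$ any slack.

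Applying this with $f=\overline Y_1^N$ and $g=y_1^N$ gives $d_{M_1}(\overline Y_1^N,y_1^N)\le\sup_{t\in I}|\overline Y_1^N(t)-y_1^N(t)|$. Both functions are non-decreasing: $\overline Y_1^N$ as a running maximum, and $y_1^N$ by construction. The right-hand side tends to $0$ in probability by Lemma~\ref{lem:Ybar-h1N}.
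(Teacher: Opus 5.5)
Your proof is correct and follows the paper's route. You reduce to the deterministic bound $d_{M_1}(f,g)\le\sup_{t\in I}|f(t)-g(t)|$ for non-decreasing $f,g$ and then invoke Lemma~\ref{lem:Ybar-h1N}; your anti-diagonal parametrization by $t+x$, closed by the contradiction at $t_2-$, is a valid way to supply the step the paper only calls ``readily checked''. Two small points: it gives $|t_1-t_2|=|x_1-x_2|\le\delta$, so the bound is $\delta$ in the $\ell^\infty$ norm but only $\sqrt2\,\delta$ in the Euclidean one (harmless here), and the earlier false starts (the first ``key estimate'' attempt and the ``hybrid parametrization'' remark) should simply be deleted.
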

Proposition~\ref{propeastwest} for $i=1$ is now  an immediate consequence of the triangle inequality, combining~\eqref{hNclose} with Corollaries~\ref{cor:yybar-m1} and~\ref{cor:Ybar-h1N}.
\subsubsection{Case $i=0$}
Let $Z_0^N$ be as in Section~\ref{GWpart2}. 
For $r\in [-\eps, t^{(a,b)}]$ we put
$$\widehat Z_0^N(r):= Z_0^N(t^{(a,b)}-r),$$
i.e. $\widehat Z_0^N$ is the time reversal of $Z_0^N$ at the time point $t^{(a,b)}$.
  Next, let $\widehat Y_0^N$ be distributed as
$$
  t\mapsto \log_N^+\big(\widehat Z^N_{0} (r \varphi_N^{-1} \log N)\big), 
  \qquad  r \in [-\eps, t^{(a,b)}].
$$
and put
$$\widetilde Y_0^N(t):= \max_{-\eps \le v \le t^{(a,b)}-t} \widehat Y_0^N(v),  \qquad  t \in [0, t^{(a,b)}+\eps].
$$
As an immediate consequence of Lemma~\ref{logfluc0} we obtain
\begin{lemma} \label{distYY0bar}
\begin{equation}\label{Y0bar}
\sup_{0 \le t \le t^{(a,b)}+\eps}|\widetilde Y_0^N(t)- Y_0^N(t)| \to 0 \quad \mbox{in probability as } N\to \infty.
\end{equation}
\end{lemma}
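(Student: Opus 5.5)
The approach is to read $\widetilde Y_0^N$ as a \emph{future maximum} of $Y_0^N$, and then to bound the gap between this future maximum and the current value by the fluctuation estimates of Lemma~\ref{logfluc0}, applied to the discrete-time embedding of $Z_0^N$.

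\emph{Reduction to a future maximum.} Unwinding the definitions of $\widehat Z_0^N$ and $\widehat Y_0^N$ (a time reversal at $t^{(a,b)}$), we have, up to the harmless identification in law,
$$\widetilde Y_0^N(t)=\max_{t\le s\le t^{(a,b)}+\eps} Y_0^N(s),\qquad t\in[0,t^{(a,b)}+\eps].$$
In particular $\widetilde Y_0^N(t)\ge Y_0^N(t)$. Moreover $\widetilde Y_0^N(t)$ is bounded above by the maximum over the whole future $s\ge t$, without the truncation at $t^{(a,b)}+\eps$. It therefore suffices to show that
$$\sup_{t\ge 0}\Big(\sup_{s\ge t}\log_N^+ Z_0^N(s)-\log_N^+Z_0^N(t)\Big)\to 0$$
in probability; the time scaling by $\varphi_N^{-1}\log N$ plays no role here. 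The continuous-time path $Z_0^N$ visits exactly the successive states of its jump chain $W$, which has law $\P^{p(0,N)}_{k(N)}$ since $Z_0^N(0)=k(N)$. Hence the supremum above equals
$$\sup_{n\ge 0}\Big(\log_N^+ m(n)-\log_N^+W(n)\Big),\qquad m(n):=\max_{j\ge n}W(j).$$
After $\tau_0(W)$ the walk is absorbed at $0$, so both terms vanish there, and only $n<\tau_0(W)$ needs to be considered.

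\emph{Decomposition of $[0,\tau_0)$.} Write $S=S(W)$ and set $\sigma_0:=\tau_0$. Every $n<\tau_0$ satisfies either $n<\sigma_S$, or $\sigma_k\le n<\sigma_{k-1}$ for exactly one $k\in\{1,\dots,S\}$.

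In the first case, $W$ attains $S$ at time $\sigma_S\ge n$, so $m(n)=S$. The required bound is then exactly \eqref{flucdownfirst}.

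In the second case, two facts pin down $m(n)$:
\begin{itemize}
\item the walk visits $k-1$ at time $\sigma_{k-1}>n$, so $m(n)\ge k-1$;
\item since $\sigma_k$ is the last visit to $k$ before absorption and steps are $\pm1$, the walk stays strictly below $k$ after $\sigma_k$, so $m(n)\le k$.
\end{itemize}
Hence $\log_N m(n)\le \log_N k$. Consequently $\log_N^+m(n)-\log_N^+W(n)\le \log_N k-\log_N W(n)$ for $W(n)\ge1$, and \eqref{flucdown2new} gives a bound uniform in $k$ and $n$.

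Combining the two cases yields \eqref{Y0bar}.

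\emph{Main obstacle.} The only delicate point is the combinatorial identification of the future maximum on each interval $[\sigma_k,\sigma_{k-1})$, that is, the claim that $W<k$ after $\sigma_k$. This uses the nearest-neighbour steps together with the definition of $\sigma_k$ as the last visit to $k$ before absorption. Everything else is a direct invocation of Lemma~\ref{logfluc0} and the monotonicity of truncating the time window.
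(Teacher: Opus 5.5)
Your proof is correct and follows the paper's route. The paper declares the lemma an immediate consequence of Lemma~\ref{logfluc0}. Your steps are exactly the bookkeeping behind that claim: you read $\widetilde Y_0^N$ as the future maximum of $Y_0^N$, pass to the jump chain, and split $[0,\tau_0)$ into $[0,\sigma_S)$ and the intervals $[\sigma_k,\sigma_{k-1})$, on which the future maximum is at most $k$. This is precisely what lets \eqref{flucdownfirst} and \eqref{flucdown2new} apply. One cosmetic point: $m(n)$ should be the maximum of the walk stopped at $\tau_0$, because the canonical $W$ under $\P^{p(0,N)}_{k(N)}$ is not itself absorbed at $0$.
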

Combining Lemma~\ref{distYY0bar} with an application of Lemma~\ref{lem:m1-sup} b)  we readily infer that
\begin{equation}\label{yy0bar}
    d_{M_1}(Y_0^N, \widetilde Y_0^N) \to 0 
      \quad \mbox{in probability as } N\to \infty.
\end{equation}
\begin{definition}
  We put
  \begin{equation}\label{deftheta}
    \vartheta_N
     := \inf\{t\ge 0 \mid Y_0^N(t) \le \log_N\lfloor\tfrac{\log N}{\varphi_N}\rfloor\}
  \end{equation}
  and
  $$
    y_0^N(t) := 
    \begin{cases}
      \log_N^{} \lfloor \tfrac {N}{\sqrt{\log N}}\rfloor  -at &\mbox{for } 0\le t< \vartheta_N,\\
      \widetilde Y_0^N(t) & \mbox{for } \vartheta_N \le t < t^{(a,b)}+\eps.
    \end{cases}
  $$
\end{definition}
\begin{remark}
  \begin{enumerate}[a)]
    \item From Proposition~\ref{Zph4}  we conclude that
    \begin{equation}\label{Zph4appl}
      \sup_{0\le t\le \vartheta_N} | Y_0^N(t)-y_0^N(t) | \to 0 \quad\mbox{in probability as } N\to \infty.
    \end{equation}
    Since $\widetilde Y_0^N(t)$ and $y_0^N(t)$ are equal for $t\in [\vartheta_N, t^{(a,b)}+\eps]$, we conclude from Lemma~\ref{distYY0bar} that
    \begin{equation}\label{yYtilde}
      \sup_{0 \le t \le t^{(a,b)}+\eps}|\widetilde  Y_0^N(t)- y_0^N(t)|
      \to 0 \quad \mbox{ in probability as } N\to \infty.  
    \end{equation}
    Both $\widetilde  Y_0^N(t)$ and $y_0^N(t)$ are non-increasing in $t$, hence an argument paralleling the proof of Lemma~\ref{lem:m1-sup} shows that the $d_{M_1}$-distance is upper-bounded by the sup-distance, thus yielding
    \begin{equation}\label{Y0y0N}
      d_{M_1}(\widetilde Y_0^N, y_0^N) \to 0 \mbox{ in probability as } N\to \infty.
    \end{equation}
    \item As in Remark~\ref{zetaremark} we infer, now applying Proposition~\ref{Zph5} in place of Proposition~\ref{Zph1}, that
    \begin{equation}\label{y0y0N}
      d_{M_1}(y_0^N, y_0) \to 0 \mbox{ in probability as } N\to \infty.
    \end{equation}
  \end{enumerate}
\end{remark}
The assertion of Proposition~\ref{propeastwest} for $i=0$ now follows from~\eqref{yy0bar}, \eqref{Y0y0N}, \eqref{y0y0N} and the triangle inequality.
\subsection{Completion of the proof of Theorem~\ref{thm:house} part II}\label{secpfM1topold} We will construct $H_1^N$ from $Z_1^N$ and $H_0^N$ from $Z_0^N$ via the time-change that was applied already in Section~\ref{pfpart1}. To this purpose we will work with the stopping times $\Theta_{2,3}^N:= T^N_{\big\lfloor \tfrac {N}{\log N}\big \rfloor}$  and
$\Theta_{3,4}^N:=T^N_{\big\lfloor N\big(1-\tfrac 1{\sqrt{\log N}}\big)\big\rfloor}$ as defined in~\eqref{def:TNk}; in the jargon of Section~\ref{pfpart1} these stand for the transitions from phase 2 to phase 3, and from phase 3 to phase 4, respectively. 

(i) In the time intervals $[0,\Theta_{2,3}^N]$ and $[\Theta_{3,4}^N, \infty)$ 
the time-change factor $f_N$ defined in~\eqref{deftcf} is (uniformly in probability) close to $1$ as $N \to \infty$, see~\eqref{tc0} and~\eqref{tc1}.
 Therefore with
 $$
   \tau^N_{2,3}
    := \tfrac{\varphi_N}{\log N}\Theta_{2,3}^N,
    \qquad
   \tau^N_{3,4}
    := \tfrac{\varphi_N}{\log N}\Theta_{3,4}^N
 $$
 the assertion of Proposition~\ref{propeastwest} carries over from $Y_1^N \big |_{[-\eps,\tau^N_{2,3} ]}$ to $H_1^N\big |_{[-\eps,\tau^N_{2,3}] }$, and from $Y_0^N\big |_{[0,t^{(a,b)}+\eps ]}$ to $H_0^N\big |_{[\tau^N_{3,4},\tau^N_{3,4}+t^{(a,b)}+\eps] }$, yielding
 \begin{equation}\label{dM1conv}
   d_{M_1}\Big(H_1^N\big |_{[-\eps,\tau^N_{2,3}] } , h_1\big |_{[-\eps,\tau^N_{2,3}]} \Big)
     \to 0
   \mbox{ and }
   d_{M_1}\Big(H_0^N\big |_{[\tau^N_{3,4},\tau^N_{3,4}+t^{(a,b)}+\eps]},
               h_0\big |_{[\tau^N_{3,4},\tau^N_{3,4}+t^{(a,b)}+\eps]} \Big)
     \to 0,    
 \end{equation} 
 both in probability as $N\to \infty$. (In the second convergence we also exploited the fact the $d_{M_1}$-distance between $h_0\big |_{[t^{(a,b)},2t^{(a,b)}+\eps]}$ and its time-shift by  $t^{(a,b)}-\tau_{3,4}^N$ converges to $0$ in probability as $N\to \infty$, because $h_0$ is continuous and $\tau_{3,4}^N \to t^{(a,b)}$ in probability as $N\to \infty$, see Propositions~\ref{Zph1}-~\ref{Zph3}.

 (ii) We know from part I of Theorem~\ref{thm:house} that the sup-distances between $H_1^N \big |_{[\tau^N_{2,3}, 2t^{(a,b)}+\eps] }$ and $h_1^N \big |_{[\tau^N_{2,3}, 2t^{(a,b)}+\eps] }$ on one hand, and between $H_0^N \big |_{[-\eps, \tau^N_{3,4}] }$ and $h_0^N \big |_{[-\eps, \tau^N_{3,4}] }$ on the other hand, both converge to $0$ in probability as $N\to \infty$. Together with~\eqref{dM1conv} this gives the assertion of Theorem~\ref{thm:house} part II. 

 \appendix

\section{Logarithmic fluctuations of Bessel-like random walks}\label{pfpropM1}
The main objective of this section is the proof of Lemmas~\ref{logfluc1} and~\ref{logfluc0}.
Using the notation introduced in 
Definition~\ref{discreteW}, we start by recalling the form of $\P_k^p(\tau_0(W) = \infty)$ and the  Markovian dynamics of $W$ under $\P_k^{p,+}$. (In the sequel we will usually omit the argument $W$ in $\tau_\ell(W)$ and $\sigma_\ell(W)$ when workíng with the canonical model.)
\begin{remark}\label{condelem}
  \begin{enumerate}[a)]
      \item For $p>\tfrac 12$ and $r:= \tfrac {1-p}p$ one has for $k\in \N$
        \begin{equation} \label{rayknight}
          \mathfrak h_p(k):= \P_k^p(\tau_0 = \infty)= 1-r^k. 
        \end{equation}
      \item Under $\P_\ell^{p,+}$, $W$ is a Markov chain on $\N$, started in $\ell$ and with transition probabilities given by
        \begin{equation}\label{htp}
           \P_\ell^{p,+}(W(n+1) = k+1 \mid W(n) = k)
             = \frac p{\mathfrak h_p(k)} \mathfrak h_p(k+1)
             = \frac{1-r^{k+1}}{(1+r)(1-r^{k})}, \quad k\in \N.  
        \end{equation}
      \item For any $N\in \N$ and $k \in \{1, \ldots, N\}$, the distributions of $(W(0), \ldots, W(\tau_N))$ under $\P_k^p(\cdot \mid \tau_N< \tau_0)$ and under $\P_k^{p,+}$ are equal.
  \end{enumerate}
\end{remark}

\begin{definition}
  \begin{enumerate}[a)]
      \item Complementing the definition in~\eqref{rayknight} for $p=1/2$ by putting
        $$
          \mathfrak h_{1/2}(k)
           := \tfrac {k+1}k,\quad k \in \N,
        $$
        let $W_{1/2}^*$ be the Markov chain on $\N_0$ with transition probabilities $\P(W^*_{1/2}(1) =1 \mid W_{1/2}^*(0) =0) = 1$ \, and
        \begin{equation}\label{defWhalf}
          \P(W_{1/2}^*(1) = k\pm 1 \mid W_{1/2}^*(0) =k)
           = \frac {1/2}{\mathfrak h_{1/2}(k)} \mathfrak h_{1/2}(k\pm 1)
           = \frac 12 \frac{k\pm 1}k, \quad k\in \N.
        \end{equation}
      \item In the sequel (and in accordance with~\eqref{defWhalf}) let $W_p^*$ denote  the Markov chain on $\N_0$ with transition probabilities given by~\eqref{htp}, complemented by $\P(W^*_p(1) =1 \mid W_p^*(0) =0) = 1$.
  \end{enumerate}
\end{definition}

\begin{remark}\label{besremark}
   The Markov chain $W_{1/2}^*$ is a {\em Bessel-like random walk} in the sense of~\cite{alexander}, cf.\ also~\cite[Sec 2]{pitman1974one}. The next lemma guarantees that the upward drift of $W_p^\ast$ is minimal for $p=\tfrac 12$.
\end{remark}

\begin{lemma}\label{driftcompar} For $p> 1/2$ and $k=1,2,\ldots$
\begin{equation}\label{Wpdrift}
 \P(W_p^*(1) = k+1 \mid W_p^*(0) =k)  >  \P(W_{1/2}^*(1) = k+1 \mid W_{1/2}^*(0) =k).
\end{equation}
\end{lemma}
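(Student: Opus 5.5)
The plan is to compare the two transition probabilities in closed form and reduce the claim to an elementary inequality for a finite geometric sum. By \eqref{htp}, with $r=\tfrac{1-p}{p}\in(0,1)$, the left-hand side of \eqref{Wpdrift} equals $\frac{1-r^{k+1}}{(1+r)(1-r^k)}$. By \eqref{defWhalf}, the right-hand side equals $\frac{k+1}{2k}$. Since all quantities are positive, \eqref{Wpdrift} is equivalent to
\[
2k\,(1-r^{k+1}) > (k+1)(1+r)(1-r^k).
\]

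Next I write everything in terms of $S_m:=\sum_{j=0}^{m-1} r^j$ and cancel the common factor $1-r>0$. This uses $1-r^{m}=(1-r)S_m$. The identities $rS_k=S_{k+1}-1$ and $S_k=S_{k+1}-r^k$ give $(1+r)S_k=2S_{k+1}-1-r^k$. After collecting terms, the inequality becomes
\[
2\sum_{j=0}^{k} r^j < (k+1)(1+r^k).
\]

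To prove this I pair the $j$-th and $(k-j)$-th terms. Since $(1-r^j)(1-r^{k-j})\ge 0$, we have $r^j+r^{k-j}\le 1+r^k$ for every $0\le j\le k$. Summing over $j=0,\dots,k$ gives $2S_{k+1}\le (k+1)(1+r^k)$. The inequality is strict as soon as some $j$ with $0<j<k$ exists, because then $0<r^j,r^{k-j}<1$. This covers every $k\ge 2$.

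The only delicate point is the boundary case $k=1$. There both sides of \eqref{Wpdrift} equal $1$: the chain conditioned to avoid $0$ must step up from $1$, and so does $W^*_{1/2}$. So the strict inequality holds for $k\ge2$, while for $k=1$ one only has equality. The statement should therefore be read with ``$\ge$'' for $k=1$, or for $k\ge 2$ in the strict form. This is all that is needed for the subsequent stochastic-domination coupling of $W_p^*$ over $W_{1/2}^*$. Apart from this, no step poses a real obstacle; the algebraic reduction is routine.
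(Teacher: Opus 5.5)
Your proof is correct, and it departs from the paper's only after the common first step of clearing denominators.

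The paper expands into the polynomial $f(r)=-(k-1)r^{k+1}+(k+1)r^k-(k+1)r+k-1$. It then concludes by convexity, using $f(1)=f'(1)=0$ and $f''(r)=k(k^2-1)r^{k-2}(1-r)>0$. You instead divide out $1-r$ and prove $2\sum_{j=0}^k r^j<(k+1)(1+r^k)$ by pairing $j$ with $k-j$.

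Your route gives more than the convexity argument. It yields the explicit factorization $f(r)=(1-r)\sum_{j=0}^k(1-r^j)(1-r^{k-j})=(1-r)^3\sum_{j=1}^{k-1}S_jS_{k-j}$. This makes both positivity and the equality case visible at once. The calculus argument is quicker to check but hides the degenerate case.

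Your observation about $k=1$ is right, and it exposes a flaw in the paper's proof. For $k=1$ one has $f\equiv 0$, so the claim $f''>0$ fails. Both transition probabilities equal $1$ there, and the strict inequality in \eqref{Wpdrift} holds only for $k\ge 2$. As you note, this is harmless: the monotone coupling behind Corollary~\ref{downexc} only needs the weak inequality at every $k$.
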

\begin{proof} Because of~\eqref{htp} and\eqref{defWhalf}, the proof of \eqref{Wpdrift} amounts to checking that
for $r< 1$ and $k=1,2,\ldots$
\begin{equation}\label{quizz}
    \frac 12 \frac {k+1}k
      < \frac{1-r^{k+1}}{(1+r)(1-r^{k})}.
\end{equation}
\eqref{quizz}  is equivalent to 
\begin{align*}
    f(r):=
     & -(k-1)r^{k+1}+(k+1)r^k-(k+1)r+k-1 > 0.
    %\\[.5em]
    % f'(r)
    %  &= -(k^2-1)r^k+k(k+1)r^{k-1}-(k+1),\\[.5em]
    % f''(r)
    %  &= k(k^2-1)r^{k-2}(1-r).
\end{align*}
It is readily checked that $f(1)=f'(1)=0$ and $f''(r)>0$ for $r\in(0,1)$, which proves \eqref{quizz}.
\end{proof}
The following is an immediate consequence of Lemma~\ref{driftcompar}.
\begin{cor}\label{downexc}
For $k \in \N$ and $p \in (0,1)$ let $W^*_{1/2}$ and $W^*_p$ both start in $0$. Then, with $\tau_k$ defined in~\eqref{deftauk},
\begin{align*}
  \max_{1\le k \le N} \max_{\tau_{k}(W_p^*)\le n < \tau_{k+1}(W_p^*)}
    & \big(\log_N(k)- \log_N(W_p^*(n)\big)
  \quad \mbox{ is stochastically not larger than }\\
  \max_{1\le k \le N} \max_{\tau_{k}(W_{1/2}^*)\le n < \tau_{k+1}(W_{1/2}^*)}
    & \big(\log_N(k)- \log_N(W_{1/2}^*(n)\big). 
\end{align*}
 \end{cor}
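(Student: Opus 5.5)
We realize $W_p^*$ and $W_{1/2}^*$ on a common probability space, both started in $0$, so that $W_p^*(n)\ge W_{1/2}^*(n)$ for all $n$ almost surely, and such that the excursion structure is preserved. The hitting times $\tau_k$ are then ordered, and we compare the relevant functionals along this monotone coupling.

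\textbf{Step 1 (monotone coupling).} Let $(U_n)_{n\ge1}$ be i.i.d.\ uniform on $[0,1]$. Write $q_p(k)$ for the upward transition probability of $W_p^*$ from $k$, with $q_p(0)=1$. Set
\[
W_p^*(n+1)=W_p^*(n)+\mathbf 1_{\{U_{n+1}\le q_p(W_p^*(n))\}}-\mathbf 1_{\{U_{n+1}> q_p(W_p^*(n))\}},
\]
and define $W_{1/2}^*$ analogously with the same $U$'s. By Lemma~\ref{driftcompar}, $q_p(k)>q_{1/2}(k)$ for all $k\ge 1$. Hence, when both chains sit at the same site, an up-step of $W_{1/2}^*$ forces an up-step of $W_p^*$. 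Since both chains have $\pm1$ steps and start at the same parity, they remain of equal parity, so they cannot cross without first meeting. It follows by induction that $W_p^*(n)\ge W_{1/2}^*(n)$ for all $n$.

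\textbf{Step 2 (pathwise comparison is not immediate).} A direct pathwise domination of the functional fails. Even though $W_p^*$ lies higher, its hitting time $\tau_k$ is earlier, and the window $[\tau_k,\tau_{k+1})$ could in principle contain a deeper dip relative to $k$. We therefore do not compare window by window. Instead we use the strong Markov property and a level-by-level decomposition. For each level $k$, the quantity
\[
D_k:=\max_{\tau_k\le n<\tau_{k+1}}\big(\log_N k-\log_N W(n)\big)
\]
is determined by the excursion of $W$ below $k+1$ started at $k$. Consequently the $D_k$, $k=1,\dots,N$, are independent for each chain, and $D_k$ has the law of the functional $\log_N k-\log_N\min\{W(n):n<\tau_{k+1}\}$ under the chain started at $k$.

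\textbf{Step 3 (reduction to a single excursion).} For a fixed $k$, run both chains from $k$ under the coupling of Step 1, stopping each at its own $\tau_{k+1}$. On the event that $W_{1/2}^*$ reaches $k+1$, it does so no earlier than $W_p^*$ does, because $W_p^*\ge W_{1/2}^*$. Before $W_p^*$ hits $k+1$, its running minimum is at least that of $W_{1/2}^*$ up to the same time. Hence
\[
\min_{n<\tau_{k+1}(W_p^*)}W_p^*(n)\ \ge\ \min_{n<\tau_{k+1}(W_{1/2}^*)}W_{1/2}^*(n).
\]
Thus $D_k(W_p^*)$ is stochastically dominated by $D_k(W_{1/2}^*)$ for each $k$. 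By the independence across $k$ from Step 2, the vectors $(D_k)_{k\le N}$ are stochastically ordered coordinatewise. Taking the maximum, which is a monotone function of the vector, yields the claimed stochastic comparison.

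The main obstacle is Step 2: the naive single-coupling argument fails because the chains traverse the windows $[\tau_k,\tau_{k+1})$ at different times. The key point is to recouple freshly at each level via the strong Markov property and to use independence of the level-wise excursions.
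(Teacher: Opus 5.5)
Your proof is correct and follows the route the paper indicates: the paper only says the corollary is an immediate consequence of Lemma~\ref{driftcompar}, i.e.\ of the drift comparison behind your monotone coupling. You rightly note that a single global coupling does not order the functional pathwise, and your repair supplies the detail the paper leaves out: you couple each segment $[\tau_k,\tau_{k+1})$ separately and use the independence of these segments from the strong Markov property. One small addition is needed. Lemma~\ref{driftcompar} is stated only for $p>1/2$, so for $p<1/2$ you should observe that \eqref{htp} is invariant under $r\mapsto 1/r$; hence $W_p^*$ and $W_{1-p}^*$ have the same transition probabilities, and the case $p=1/2$ is trivial.
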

\normalcolor
\begin{prop}\label{HKKcor}
    Let $W^*:= W_{1/2}^*$ start in $0$. Then\\
a) for any $\beta > 0$
\begin{equation}\label{lllaw}
    \sup_{1\le n \le N^\beta} \big|\log_N W^*(n)-\log_N\sqrt n \, \big| \to 0 \quad \mbox{ a.s.\ as } N\to \infty,
\end{equation}
b) with $\tau_k$ as in~\eqref{deftauk},
\begin{equation}\label{fluct}
   \max_{1\le k \le N} \max_{\tau_{k}(W^*)\le n < \tau_{k+1}(W^*)} \big(\log_N(k)- \log_N(W^*(n)\big) \to 0 \quad \mbox{ a.s.\ as } N\to \infty.
\end{equation}
\end{prop}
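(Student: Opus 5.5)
The plan is to prove both parts by coupling $W^*$ with simple symmetric random walk, and then to control downward excursions through the explicit harmonic function $k$ of the $h$-transform.

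\textbf{Part b).} Under~\eqref{defWhalf}, $W^*$ is the Doob $h$-transform of simple symmetric random walk killed at $0$, with $h(k)=k$. Hence, for $1\le j<k$, optional stopping of the martingale $1/W^*$ (i.e.\ the gambler's ruin formula transported through the $h$-transform) gives
\begin{equation*}
\P\big(W^* \mbox{ ever hits } j \,\big|\, W^*(0)=k\big)=\tfrac jk .
\end{equation*}
Fix $\delta>0$. A naive union bound over all $k\le N$ would cost a factor $N$, so we group levels dyadically instead. For $2^m\le k<2^{m+1}$ and $n\ge\tau_k\ge\tau_{2^m}$, the event $\log_N k-\log_N W^*(n)>\delta$ forces $W^*(n)<2^{m+1}N^{-\delta}$. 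By the strong Markov property at $\tau_{2^m}$, the probability that $W^*$ ever goes below $2^{m+1}N^{-\delta}$ after $\tau_{2^m}$ is at most $2N^{-\delta}$. Taking a union bound over the $O(\log N)$ blocks $m\le\log_2 N$, the probability that the maximum in~\eqref{fluct} exceeds $\delta$ is $O(N^{-\delta}\log N)$.

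For almost sure convergence we apply Borel--Cantelli along $N=2^j$: the terms $j\,2^{-j\delta}$ are summable. For $2^j\le N<2^{j+1}$ we compare with the bound for $2^{j+1}$, which covers all $k\le N$. Since $\log_N x/\log_{2^{j+1}}x\to1$ uniformly for $x\le N$, the error changes only by a factor tending to $1$. Letting $\delta\downarrow0$ along a sequence gives~\eqref{fluct}.

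\textbf{Part a), upper bound.} We use Pitman's discrete $2M-X$ theorem (\cite[Sec.~2]{pitman1974one}): $W^*$ has the law of $2M-S$, where $S$ is simple symmetric random walk and $M$ is its running maximum. Consequently $W^*(n)\le 3\max_{m\le n}|S(m)|$. By the law of the iterated logarithm, this is eventually at most $n^{1/2}\log n$, so $\log_N W^*(n)-\log_N\sqrt n\le \log_N\log n\to0$ uniformly in $n\le N^\beta$.

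\textbf{Part a), lower bound.} This is where I expect the main difficulty, since small values of $W^*$ have to be excluded at all times $n\le N^\beta$ simultaneously. First, a local estimate: the law of $W^*(n)$ is $\tfrac{k}{1}\cdot\P_0(\text{SRW}(n)=k)$ up to normalization, with BES(3)-type density $\asymp k^2/n^{3/2}$ for $k\lesssim\sqrt n$. This gives
\begin{equation*}
\P\big(W^*(n)\le 2n^{1/2-\delta}\big)\le Cn^{-3\delta}.
\end{equation*}
Second, to pass from fixed times to all $n$, we use geometric times $n_i=2^i$ together with part b)-type control: by the hitting formula, once $W^*$ exceeds $x$ it falls below $x n_i^{-\delta}$ with probability at most $n_i^{-\delta}$. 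Both bounds are summable in $i$, so by Borel--Cantelli, eventually $W^*(n)\ge n^{1/2-2\delta}$ for all $n$. The finitely many exceptional small $n$ are harmless, because $\log_N$ of any fixed quantity tends to $0$; this yields~\eqref{lllaw}.
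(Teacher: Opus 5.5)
Your proof is correct, but it takes a different route from the paper's.

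\textbf{The paper's route.} The paper gets a) in one stroke from the law of the iterated logarithm for Bessel-like walks of \cite{hambly2003law}: almost surely $\limsup_n W^*(n)/\sqrt{2n\log\log n}=1$ and $\liminf_n (\log n)^2W^*(n)/\sqrt{2n}=\infty$. It then deduces b) from a) by a sandwich. For $\tau_k\le n<\tau_{k+1}$ one has $W^*(n)\le k$ and $\tau_k\le n$. Applying a) with $\beta>2$ (so that $\tau_{N+1}\le N^\beta$ eventually) makes $\log_N k$ close to $\log_N\sqrt{\tau_k}$, and $\log_N W^*(n)$ close to $\log_N\sqrt n$.

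\textbf{Your route.} You decouple the two parts. Part b) comes directly from the harmonic function $h(k)=k$, i.e.\ from $\P_k(\tau_j<\infty)=j/k$, via dyadic blocks of levels and Borel--Cantelli along $N=2^j$. Part a) you prove by hand: Pitman's $2M-S$ theorem plus the LIL for the upper envelope, and a fixed-time local bound plus the same hitting estimate along $n_i=2^i$ for the lower envelope.

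\textbf{What each buys.} The paper's argument is shorter and yields far sharper envelopes than needed. However, it rests on an external theorem, and it passes through the diffusive time scale even though \eqref{fluct} depends only on the sequence of visited states. Your argument is self-contained, gives the explicit rate $O(N^{-\delta}\log N)$ in b), and uses exactly that spatial nature. A further advantage is that your b) argument transfers verbatim to $W_p^*$ for $p>\tfrac12$. There $\P_k(\tau_j<\infty)=\frac{r^{k-j}-r^k}{1-r^k}\le \frac jk$ by convexity of $x\mapsto r^{k-x}$. So you would obtain Lemma~\ref{logfluc1} uniformly in $p$, without the drift comparison of Lemma~\ref{driftcompar} and Corollary~\ref{downexc}.

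\textbf{Minor slips.} None of these affects the argument.
\begin{enumerate}[a)]
\item Pitman's theorem identifies $(W^*(n+1))_{n\ge0}$ in law with $(2M_n-S_n+1)_{n\ge0}$. Your bound should therefore read $W^*(n)\le 3\max_{m\le n}|S(m)|+1$.
\item The law of $W^*(n)$ is $k\,\P_1(S_{n-1}=k,\ \tau_0>n-1)=k\,[\P_0(S_{n-1}=k-1)-\P_0(S_{n-1}=k+1)]$, not $k\,\P_0(S_n=k)$ up to normalization. The killing at $0$ is what produces the $k^2/n^{3/2}$ density that you then correctly use.
\item Your lower bound only gives $\log_N W^*(n)-\log_N\sqrt n\ge-2\delta\beta$ for $n\le N^\beta$. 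So in a), as in b), you must also let $\delta\downarrow0$ along a sequence.
\end{enumerate}
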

\begin{proof}
       \cite[Theorem 1]{hambly2003law} shows that almost surely 
     \begin{equation}\label{limsup}
       \limsup_{n\to \infty} \frac {W^*(n)}{\sqrt{2n\log\log n}} = 1, \qquad   \liminf_{n\to \infty} \frac {(\log n)^2W^*(n)}{\sqrt{2n}} = \infty. 
     \end{equation}
     Taking logarithms in~\eqref{limsup}  gives
\begin{equation}\label{littleo}
\log W^*(n) = 
    \log \sqrt n + o(\log n)
     \qquad \mbox{a.s. as } n \to \infty.
\end{equation}
\normalcolor
Consequently,  with 
$$
  \varepsilon_N^{}(n)
   := \Big| \frac {\log W^*(n)}{\log N} -  \frac{ \log \sqrt n}{\log N} \Big|
$$
we obtain
\begin{equation}\label{large}
  \sup_{\log N< n \le N^\beta}  \varepsilon_N(n)
    \to 0
     \qquad \mbox{a.s. \, as } N\to \infty.
\end{equation}
On the other hand, using the trivial bound $W^*(n) \le n$,
\begin{equation}\label{small}
  \sup_{1\le n \le \log N} \varepsilon_N^{}(n)
    \le  \Big| \frac {2 \log \log N}{\log N} \Big|
     \quad \mbox{ for all } N \ge 1.
\end{equation}
Combining~\eqref{large} and~\eqref{small} proves part a) of the lemma. To show part b), we choose $\beta>2$ (implying that $\P(\tau_N < N^\beta \mbox{ for sufficiently large } N) =1 $) and abbreviate
\begin{equation}\label{defdelta}\delta_N:= \sup_{1\le n \le N^\beta} \big|\log_N W^*(n)-\log_N\sqrt n \, \big|.
\end{equation}
For the rest of this proof we choose $k \in \{1,\ldots, N\}$ and $j \in \{\tau_k, \tau_k +1, \ldots,\tau_{k+1}-1\}$. It follows from the definition of $\tau_\ell$ in~\eqref{deftauk} that
\begin{equation}\label{firstest}
\log_Nk = \log_N(W^*(\tau_k)) \ge \log_N(W^*(j)), \qquad  \log_N \sqrt {\tau_{k}} \le \log_N \sqrt j,
\end{equation}
On the other hand the definition of $\delta_N$ in~\eqref{defdelta} implies that
\begin{equation}\label{secondest}\big | \log_Nk -\log_N \sqrt {\tau_{k}} \big| \le \delta_N, \qquad \big|\log_N W^*(j)-\log_N\sqrt j \, \big|\le \delta_N.
\end{equation}
\eqref{firstest} and~\eqref{secondest} together readily imply that 
\begin{equation}\label{wanted}\big | \log_Nk -\log_N W^*(j)\big| \le \delta_N.
\end{equation}
From part a) we know that $\delta_N\to 0$ a.s.\ as $N\to \infty$; this completes the proof of assertion b).
   \end{proof}
   As a direct consequence of Corollary~\ref{downexc} and Proposition~\ref{HKKcor} b) we obtain
\begin{cor}\label{corlogfluc}
    The convergence~\eqref{fluctpNnew} holds with $(p(1,N))$ replaced by any sequence $(p(N))$ in $(\tfrac 12, 1)$.
\end{cor}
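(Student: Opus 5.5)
The statement to prove is Corollary~\ref{corlogfluc}: \eqref{fluctpNnew} holds for any sequence $p(N)\in(\tfrac12,1)$. The plan is to transfer the almost sure statement of Proposition~\ref{HKKcor}~b) for $W^*_{1/2}$ to $W^*_{p(N)}$ via Corollary~\ref{downexc}, and then to identify $W^*_{p(N)}$ with $W$ under $\P_1^{p(N),+}$.

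\emph{Identification.} By Remark~\ref{condelem}~b), under $\P_1^{p,+}$ the walk $W$ is a Markov chain on $\N$ with transition probabilities \eqref{htp}. The chain $W^*_p$ started in $0$ jumps deterministically to $1$ and then follows the same kernel. Hence $(W^*_p(n+1))_{n\ge0}$ under $W^*_p(0)=0$ has the law of $W$ under $\P_1^{p,+}$.

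Let $\tau_k$ denote the first visit times of $W$ and $\tau_k(W^*_p)$ those of $W^*_p$. For $k\ge1$ the shift gives $\tau_k(W^*_p)=\tau_k+1$. Therefore the random variable
\[
M_N(W):=\max_{1\le k\le N}\max_{\tau_k\le n<\tau_{k+1}}\big(\log_N k-\log_N W(n)\big)
\]
under $\P_1^{p(N),+}$ has the same law as the corresponding functional of $W^*_{p(N)}$. The $k=0$ block $n=0$ of $W^*_p$ is excluded, since the maximum runs over $k\ge1$.

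\emph{Comparison.} Fix $\delta>0$. By Corollary~\ref{downexc}, applied for each $N$ with $p=p(N)>\tfrac12$,
\[
\P_1^{p(N),+}(M_N>\delta)\le \P(M_N(W^*_{1/2})>\delta).
\]
By Proposition~\ref{HKKcor}~b), $M_N(W^*_{1/2})\to0$ almost surely, hence also in probability. So the right-hand side tends to $0$, which gives \eqref{fluctpNnew} with $p(1,N)$ replaced by $p(N)$.

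\emph{Main obstacle.} The only delicate point is Corollary~\ref{downexc} itself, i.e.\ that pointwise domination of the upward transition probabilities (Lemma~\ref{driftcompar}) yields stochastic domination of the functional $M_N$. The functional is not monotone in the path in an obvious way: faster climbing also shortens the excursions. I take Corollary~\ref{downexc} as given, as the paper does.

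If it had to be justified, I would argue via a coupling along the levels. Each downward excursion below level $k$ from $\tau_k$ to $\tau_{k+1}$ has a depth distribution determined by the kernel at levels at most $k$. A larger upward probability at every level makes the depth stochastically smaller level by level. Since the excursions are independent across $k$ by the strong Markov property, their maxima can be coupled monotonically. With that coupling in hand, the rest is immediate.
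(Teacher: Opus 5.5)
Your proposal is correct and takes the paper's route: the paper derives Corollary~\ref{corlogfluc} directly from Corollary~\ref{downexc} and Proposition~\ref{HKKcor}~b). It leaves implicit the identification of the one-step shift of $W^*_{p}$ with $W$ under $\P_1^{p,+}$, which you spell out. Your level-by-level coupling sketch is a valid justification of Corollary~\ref{downexc}, which the paper only calls immediate: the segments $[\tau_k,\tau_{k+1})$ are independent, and each segment's minimum is stochastically increasing in the up-probabilities.
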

   This concludes the proof of Lemma~\ref{logfluc1}. 
  We are now going to prepare for the proof of Lemma~\ref{logfluc0} by a time-reversal argument  provided by the next lemma. This lemma is an elementary instance of the general theory on time reversal of Markov processes (\cite{nagasawa1964time}, \cite{chung1969reverse}); for the reader's convenience we include a proof by ``counting paths''. Again we will use the notation from Definition~\ref{discreteW}. Also, extending the notation already used in Lemma~\ref{logfluc1}, for $p>1/2$ and $k \in \N_0$ we write  $\P_k^{p,+}$ for the probability measure $\P_k^p$ conditioned under the event $\{\tau_0^+ < \infty\}$. 
   
\begin{lemma}\label{reversallemma} For $p < 1/2$ and $N \in \N$ let $\P_N^{p,-}$ denote the probability measure $\P_N^p$ conditioned under the event $\{\tau_0< \tau_N^+\}$. Then
$$(W(\tau_0), W(\tau_0-1), \ldots, W(0))\quad \mbox{ \text{under} } \P_N^{p,-}$$
has the same distribution as
$$(W(0), W(1), \ldots, W(\tau_N))\quad \mbox{ \text{under}   } \P_0^{1-p,+}.$$
\end{lemma}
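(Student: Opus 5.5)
We prove the lemma by counting paths. Both laws live on finite nearest-neighbour paths, so it suffices to show that each fixed path receives the same probability under the two laws.

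\emph{Setting up the two path sets.} Fix a finite path $\gamma=(\gamma_0,\ldots,\gamma_m)$ with $\gamma_0=N$, $\gamma_m=0$, $\gamma_i\in\{1,\ldots,N-1\}$ for $0<i<m$, and steps $\pm1$. These are exactly the paths $(W(0),\ldots,W(\tau_0))$ on the event $\{\tau_0<\tau_N^+\}$. Its reversal $\hat\gamma=(\gamma_m,\ldots,\gamma_0)$ starts at $0$, ends at $N$, and stays in $\{1,\ldots,N-1\}$ strictly in between. These are exactly the realisations of $(W(0),\ldots,W(\tau_N))$ under $\P_0^{1-p,+}$. Since $\tau_0^+<\infty$ is impossible for a path that must first reach $N$, we interpret $\P_0^{1-p,+}$ as in Lemma~\ref{logfluc1}: the chain conditioned on never returning to $0$, i.e.\ the $h$-transform with $\mathfrak h_{1-p}$. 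Reversal is therefore a bijection between the two path sets.

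\emph{Computing the probability of $\gamma$.} Let $u$ be the number of up-steps and $d$ the number of down-steps of $\gamma$, so $d-u=N$. Then
\[
\P_N^{p,-}(\gamma)=\frac{p^u(1-p)^d}{\P_N^p(\tau_0<\tau_N^+)} .
\]

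\emph{Computing the probability of $\hat\gamma$.} By \eqref{htp}, the $h$-transformed transition probabilities telescope along any path. Together with the forced first step $0\to1$, this gives
\[
\P_0^{1-p,+}(\hat\gamma)=(1-p)^{d-1}p^{u}\,\frac{\mathfrak h_{1-p}(N)}{\mathfrak h_{1-p}(1)} .
\]
Here $\hat\gamma$ has $d$ up-steps, the first of which is forced, and $u$ down-steps. Equivalently, by Remark~\ref{condelem}\,c), this is the law of $\P_1^{1-p}(\cdot\mid\tau_N<\tau_0)$ after the forced step $0\to1$, which gives the same expression.

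\emph{Comparing the two.} Both probabilities equal $p^u(1-p)^d$ times a constant that does not depend on $\gamma$. Each is a probability distribution on its path set, and reversal is a bijection between the sets, so the constants must coincide and the laws agree. If one wants an explicit check, first-step analysis gives $\P_N^p(\tau_0<\tau_N^+)=(1-p)\,\P_{N-1}^p(\tau_0<\tau_N)$, which is given by the gambler's-ruin formula, and with $r=p/(1-p)$ one has $\mathfrak h_{1-p}(N)/\mathfrak h_{1-p}(1)=(1-r^N)/(1-r)$. Matching these reduces to the standard gambler's-ruin identity, but the normalisation argument makes this computation unnecessary.

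\emph{Main obstacle.} The only delicate point is the interpretation of the conditioning $\{\tau_0^+<\infty\}$, which is stated as a typo-like variant; it must be read as $\{\tau_0^+=\infty\}$, and the forced step out of $0$ must be handled correctly. Once that is settled, the proof is a one-line weight comparison.
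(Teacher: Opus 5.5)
Your proof is correct and is essentially the paper's path-counting argument: reversal is a bijection between $\mathbb W_{N,0}$ and $\mathbb W_{0,N}$, and both laws give a path probability proportional to the same weight $p^{u}(1-p)^{d}$, so normalisation forces them to agree. Your telescoping $h$-transform computation makes explicit the step the paper only states in \eqref{pfdown1}, and you are right that the conditioning defining $\P_0^{1-p,+}$ must be read as $\{\tau_0^+=\infty\}$ rather than $\{\tau_0^+<\infty\}$.
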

\begin{proof}  Let $\mathbb W_{N,0}$ be the set of paths (with steps $\pm 1$) that start in state $N$ and reach $0$ (in finitely many steps) without returning to state~$N$. Every $w\in \mathbb W_{N,0}$ is thus of the form
    $$w=(w_0, w_1, \ldots, w_{\ell(w)})$$
    with $ w_0 = N$, $w_{\ell(w)}= 0$ and $N\ge w_n\ge 1 $ for $0< n < \ell(w)$. We call $\ell(w)$ the {\em length} of $w$; this is  the number of steps taken by $w$ to reach $0$. We write $u(w)$ for the number of upward steps, and $d(w)= \ell(w)-u(w)$ for the number of downward steps in $w$. Defining
    $$\pi(w):= p^{u(w)}q^{d(w)}, \quad w\in \mathbb W_{N,0}.$$
    we observe that
\begin{equation}\label{condpro1}
       \P_N^{p,-}((W(0), \ldots, W(\tau_0)) = w) = \frac{\pi(w)} {\sum\limits_{w'\in \mathbb W_{N,0}} \pi(w')}, \quad w\in \mathbb W_{N,0}. 
    \end{equation}
    The mapping $\rho$ defined by
\begin{equation}\label{defrhoold}
     \rho (w):= \big(w(\tau_0^{}), w(\tau_0^{}-1) \ldots, w(0)\big), \quad w \in \mathbb W_{N,0},  
    \end{equation}
    transforms $\mathbb W_{N,0}$ bijectively into
    $$\mathbb W_{0,N}:= \rho(\mathbb W_{N,0}),$$
    the set of paths (with steps $\pm 1$) that start in state $0$ and reach $N$ (in finitely many steps) without returning to $0$ in between.  Every $v\in \mathbb W_{0,N}$ is thus of the form
    $$
      v
        = (v_0,v_1,\ldots, v_{\ell(v)})
    $$
    with $v_0 =0$, $v_{\ell(v)} = N$ and $0<v_n < N$  for $0< n < \ell(v)$. We define
    $$
      \widehat \pi(v)
       :=  p^{d(v)}q^{u(v)}, \quad v \in \mathbb W_{0,N}
    $$
  and observe that
  \begin{equation}\label{pfdown1}
       \P_0^{1-p, +}((W(0),\ldots, W(\tau_N) = v) =\frac{\widehat \pi(v)}{\sum_{v'\in \mathbb W_{0,N}} \widehat \pi(v').}
  \end{equation}
    Under the mapping $\rho$, upward (resp.\ downward) steps of $w$ become downward (resp.\ upward) steps of~$\rho(w)$, hence
  \begin{equation}\label{biject1}
      \widehat \pi(v) = \pi(\rho^{-1}(v)), \quad v\in \mathbb W_{0,N}.
  \end{equation}
The claimed equality in distribution now follows readily by combining~\eqref{condpro1}, \eqref{pfdown1} and~\eqref{biject1}. 
\end{proof}
Because time reversal takes first hitting times into times of last visit,  the following corollary is an immediate consequence of Corollary~\ref{corlogfluc} and Lemma~\ref{reversallemma}.

\begin{cor}\label{Besseldown1}
For any sequence $(p(N))$ in $(0, \tfrac 12)$,
\begin{equation}\label{flucdown1}
     \max_{N> k \ge 2}\,\max_{\sigma_{k+1}<n\le \sigma_{k}}(\log_N k - \log_N(W(n))) \to 0 \quad \mbox{ in } \P_N^{p(N),-}\mbox{-probability as } N\to \infty.
\end{equation}
\end{cor}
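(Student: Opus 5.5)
The plan is to transport Corollary~\ref{corlogfluc} through the path reversal of Lemma~\ref{reversallemma}. Since $p(N)<\tfrac12$, we have $1-p(N)>\tfrac12$, so Corollary~\ref{corlogfluc} applies to $\P_1^{1-p(N),+}$. The reversed path starts at $0$, however, so we first pass to a start in $1$. Under $\P_0^{1-p,+}$ the first step is forced to $1$ (the path starts at $0$ and never returns there before $\tau_N$). After that step, by Remark~\ref{condelem}~c), the segment up to $\tau_N$ has the law of $(W(0),\dots,W(\tau_N))$ under $\P_1^{1-p,+}$. Hence, up to a shift of the time index by one, the reversed path $V:=\rho(W)$ under $\P_N^{p(N),-}$ has the law of $W$ under $\P_1^{1-p(N),+}$, stopped at $\tau_N$. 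Only times up to $\tau_N$ are needed, since the maximum in \eqref{fluctpNnew} runs over $k\le N$ and we only use $k<N$.

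Next we match the index sets. Let $L:=\tau_0$ be the length of the original path $W$ (from $N$ to $0$), so that $V(m)=W(L-m)$. The last visit $\sigma_k$ of $W$ to $k$ before hitting $0$ corresponds under reversal to the first visit of $V$ to $k$: $\sigma_k(W)=L-\tau_k(V)$. For $2\le k<N$ this gives
\[
\{n:\ \sigma_{k+1}(W)<n\le\sigma_k(W)\}=\{L-m:\ \tau_k(V)\le m<\tau_{k+1}(V)\}.
\]
On this set the summands coincide: $\log_N k-\log_N W(n)=\log_N k-\log_N V(m)$. The maximum in \eqref{flucdown1} is therefore a sub-maximum (over $2\le k<N$) of the expression in \eqref{fluctpNnew} evaluated at $V$. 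Since that expression is nonnegative (as $V(m)\le k$ before $\tau_{k+1}$), the maximum in \eqref{flucdown1} is stochastically dominated by the quantity in \eqref{fluctpNnew} under $\P_1^{1-p(N),+}$. The latter tends to $0$ in probability by Corollary~\ref{corlogfluc}, which gives the claim.

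The main care point is the bookkeeping at the endpoints. One has to check that the index shift caused by starting at $0$ rather than $1$ is harmless: the functional is invariant under a time shift, and $k=0,1$ are excluded. One also has to check that the conditioning event $\{\tau_0<\tau_N^+\}$ is exactly the one that makes the reversal bijection of Lemma~\ref{reversallemma} valid. No estimates beyond Corollary~\ref{corlogfluc} are required.
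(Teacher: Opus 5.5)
Your proposal is correct and follows the paper's route. The paper's argument is to reverse the path via Lemma~\ref{reversallemma}, use $\sigma_k(W)=\tau_0-\tau_k(\rho(W))$ to turn last-visit windows into first-passage windows, and apply Corollary~\ref{corlogfluc} with $1-p(N)\in(\tfrac12,1)$; your treatment of the start in $0$ versus $1$ (forced first step plus a unit time shift) and of the restriction to $k<N$ (so only the path up to $\tau_N$ matters) correctly supplies details the paper leaves implicit.
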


We now turn to the proof of Lemma~\ref{logfluc0}, and recall that $S(W)$ denotes the maximum of the path~$W$ before it hits $0$.
Since  $k(N) = o(N)$ as $N\to \infty$, it is obvious that
\begin{equation}\label{staysmall}
\P_{k(N)}^{p(0,N)}(S(W) < N) \ge \P_{k(N)}^{1/2}(S(W) < N) \to 1 \quad \mbox{ as } N\to \infty.
\end{equation}
As a consequence of~\eqref{staysmall}, the variation distance between the distribution of $(W(0), \ldots W(\tau_0))$ under~$\P_{k(N)}^{p(0,N)}$ and the distribution of $(W(\tau_{k(N)}), \ldots, W(\tau_0))$ under $\P_N^{p(0,N),-}$ tends to $0$ as $N\to \infty$. Thus there exist random paths $W_1$ and $W_2$ defined on a common probability space such that 
\begin{itemize}
\item $ (W_1(0), \ldots, W_1(\tau_0))$ has the same distribution as $(W(0), \ldots W(\tau_0))$ under $\P_{k(N)}^{p(0,N)}$, 
\item
$ (W_2(\tau_{k(N)}), \ldots, W_2(\tau_0))$ has the same distribution as $(W(\tau_{k(N)}), \ldots W(\tau_0))$ under $\P_N^{p(0,N),-}$, 
\item
$\P((W_1(0), \ldots, W_1(\tau_0)) \neq (W_2(\tau_{k(N)}), \ldots, W_2(\tau_0)))\to 0$ as $N\to \infty$.
\end{itemize}
With
$$
  k^*:=k^*(W_2):= \sup\{\ell\mid \sigma_\ell\geq\tau_k(n)\},
$$
the path $(W_2(\tau_{k(N)}),\ldots, W_2(\sigma_{k^*}))$ is a part of $(W_2(\sigma_{k^*+1}), \ldots, W_2(\sigma_{k^*}))$,~\eqref{flucdownfirst} follows from~\eqref{flucdown1} and the coupling.  In the same manner also~\eqref{flucdown2new} results from~\eqref{flucdown1} combined with the coupling, completing the proof of Lemma~\ref{logfluc0}.

\subsection*{Acknowledgements} Funding acknowledgements by AT:  This paper was supported by the János Bolyai Research Scholarship of the Hungarian Academy of Sciences.  Project no.\ STARTING 149835 has been implemented with the support provided by the Ministry of Culture and Innovation of Hungary from the National Research, Development and Innovation Fund, financed under the STARTING\_24 funding scheme. 

\bibliographystyle{plainnat}
\bibliography{literature}

\end{document}